\newcommand*{\mailto}[1]{\href{mailto:#1}{\nolinkurl{#1}}}
\newcommand{\arxiv}[1]{\href{http://arxiv.org/abs/#1}{arXiv:#1}}
\newcommand{\R}{{\mathbb R}}
\newcommand{\C}{{\mathbb C}}
\newcommand{\bbC}{{\mathbb{C}}}
\newcommand{\bbN}{{\mathbb{N}}}
\newcommand{\bbR}{{\mathbb{R}}}
\newcommand{\bbZ}{{\mathbb{Z}}}
\newcommand{\bsA}{{\boldsymbol{A}}}
\newcommand{\bsB}{{\boldsymbol{B}}}
\newcommand{\bsD}{{\boldsymbol{D}}}
\newcommand{\bsH}{{\boldsymbol{H}}}
\newcommand{\bsI}{{\boldsymbol{I}}}
\newcommand{\bsT}{{\boldsymbol{T}}}
\newcommand{\cB}{{\mathcal B}}
\newcommand{\cC}{{\mathcal C}}
\newcommand{\cH}{{\mathcal H}}
\newcommand{\cJ}{{\mathcal J}}
\newcommand{\cK}{{\mathcal K}}
\newcommand{\beq}{\begin{equation}}
\newcommand{\enq}{\end{equation}}
\DeclareMathOperator{\esssup}{ess.sup}
\DeclareMathOperator{\dom}{dom}
\DeclareMathOperator{\tr}{tr}
\DeclareMathOperator*{\nlim}{n-lim}
\DeclareMathOperator*{\slim}{s-lim}
\DeclareMathOperator*{\wlim}{w-lim}
\DeclareMathOperator*{\sgn}{sgn}
\renewcommand{\Im}{\text{\rm Im}}
\renewcommand{\ln}{\text{\rm ln}}
\newcommand{\loc}{\operatorname{loc}}
\newcommand{\Lxi}{\xi_L}
\newcommand{\ind}{\operatorname{index}}
\newcommand{\no}{\notag}
\newcommand{\lb}{\label}
\newcommand{\f}{\frac}
\newcommand{\ol}{\overline}
\newcommand{\wti}{\widetilde}
\newcommand{\Oh}{O}
\newcommand{\hatt}{\widehat} 
\newcommand{\bi}{\bibitem}
\let\geq\geqslant
\let\leq\leqslant
\def\theequation{\@arabic\c@equation}
\numberwithin{equation}{section}
\newtheorem{theorem}{Theorem}[section]
\newtheorem{lemma}[theorem]{Lemma}
\newtheorem{corollary}[theorem]{Corollary}
\newtheorem{definition}[theorem]{Definition}
\newtheorem{hypothesis}[theorem]{Hypothesis}
\theoremstyle{remark}
\newtheorem{remark}[theorem]{Remark}
\begin{document}

\title[On the Index of a Non-Fredholm Model Operator]{On the Index of a Non-Fredholm Model Operator} 

\author[A.\ Carey]{Alan Carey}  
\address{Mathematical Sciences Institute, Australian National University, 
Kingsley St., Canberra, ACT 0200, Australia
and School of Mathematics and Applied Statistics, University of Wollongong, NSW, Australia,  2522}  
\email{\mailto{acarey@maths.anu.edu.au}}
\urladdr{\url{http://maths.anu.edu.au/~acarey/}}
  
\author[F.\ Gesztesy]{Fritz Gesztesy}  
\address{Department of Mathematics,
University of Missouri, Columbia, MO 65211, USA}
\email{\mailto{gesztesyf@missouri.edu}}
\urladdr{\url{https://www.math.missouri.edu/people/gesztesy}}

\author[G.\ Levitina]{Galina Levitina} 
\address{School of Mathematics and Statistics, UNSW, Kensington, NSW 2052,
Australia} 
\email{\mailto{g.levitina@student.unsw.edu.au}}

\author[F.\ Sukochev]{Fedor Sukochev}
\address{School of Mathematics and Statistics, UNSW, Kensington, NSW 2052,
Australia} 
\email{\mailto{f.sukochev@unsw.edu.au}}

\dedicatory{Dedicated to the memory of Leiba Rodman (1949--2015).}

\thanks{To appear in {\it Operators and Matrices}. {\bf }}

\date{\today}
\subjclass[2010]{Primary 47A53, 58J30; Secondary 47A10, 47A40.}
\keywords{Fredholm and Witten index, spectral shift function.}

\begin{abstract} 
Let $\{A(t)\}_{t \in \bbR}$ be a  path of self-adjoint Fredholm operators 
in a Hilbert space $\cH$, joining endpoints $A_\pm$ as $t \to \pm \infty$. Computing 
the index of the operator $\bsD_{\bsA}=\partial/\partial t + \bsA$ acting on 
$L^2(\bbR; \cH)$, where $\bsA$ denotes the multiplication operator
$(\bsA f)(t) = A(t)f(t)$ for $f\in L^2(\bbR; \cH)$, and its relation to spectral flow along this path, has a long history, but it is  mostly focussed on the case where the operators $A(t)$ all have purely discrete spectrum.

Introducing the operators
$\bsH_1=\bsD_\bsA^*\bsD^{}_\bsA$ and $\bsH_2=\bsD_\bsA^{}\bsD_\bsA^*$, we consider 
spectral shift functions, denoted by $\xi(\, \cdot \,; A_+, A_-)$ and $\xi(\, \cdot \, ; \bsH_2,\bsH_1)$ associated with the pairs $(A_+, A_-)$ and $(\bsH_2,\bsH_1)$. 
Under the restrictive hypotheses that $A_+$ is a relatively trace class perturbation of $A_-$,  
a relationship between these spectral shift functions was proved in \cite{GLMST11}, for 
certain operators $A_\pm$ with  essential spectrum, extending a result of Pushnitski \cite{Pu08}. 
Moreover, assuming $A_{\pm}$ to be Fredholm, the value $\xi(0; A_-, A_+)$ was shown to represent the spectral flow along the path 
$\{A(t)\}_{t\in \bbR}$ while that of $\xi(0_+; \bsH_1,\bsH_2)$ yields the Fredholm index of 
$\bsD_\bsA$.
The fact, proved in \cite{GLMST11}, that these values of the two spectral functions
are equal, resolves the index = spectral flow question in this case. 
This relationship between spectral shift functions was generalized to non-Fredholm operators in \cite{CGPST14a} again under 
the  relatively trace class perturbation hypothesis. In this situation it asserts that the Witten index
of $\bsD_\bsA$, denoted by $W_r(\bsD_\bsA^{})$, a substitute for the Fredholm index in the absence of the Fredholm property of $\bsD_\bsA^{}$, is given by 
$$
W_r(\bsD_\bsA^{}) = \Lxi(0_+; \bsH_2, \bsH_1) 
= [\Lxi(0_+; A_+,A_-) + \Lxi(0_-; A_+, A_-)]/2.     
$$
Here one assumes that $\xi(\, \cdot \,; A_-, A_+)$ possesses a right and left Lebesgue point at $0$ denoted by $\Lxi(0_{\pm}; A_+, A_-)$ (and similarly for $\Lxi(0_+; \bsH_2, \bsH_1)$).

When the path $\{A(t)\}_{t \in \bbR}$ consists of differential operators, the relatively trace class perturbation assumption is violated. The simplest assumption that applies (to differential operators 
in 1+1 dimensions) is to admit relatively Hilbert--Schmidt perturbations. This is not just
 an incremental improvement. In fact, the method we employ here to make this extension is of interest in any dimension. Moreover we consider $A_\pm$ which are not necessarily Fredholm and we establish that the relationships between the two spectral shift functions found in
{\bf all} of the previous papers \cite{CGPST14a} ,\cite{GLMST11}, and  \cite{Pu08}, can be proved, even in the non-Fredholm case. The significance of our new methods is that, besides being simpler,
 they also allow a wide class of examples such as pseudodifferential operators in higher dimensions. Most importantly, we prove  the above formula for the Witten index in the most general circumstances to date.  
\end{abstract}

\maketitle

\newpage 

{\scriptsize{\tableofcontents}}

\section{Introduction}  \lb{s1}

Typical Hamiltonians in quantum mechanical models have some essential spectrum and this is true more generally for differential operators on non-compact manifolds.
Much less is known about Fredholm theory and spectral flow in these situations.  Our objective in this paper is to investigate operators for which zero is in the essential spectrum
and for which Fredholm theory is not applicable. 

This paper is motivated by \cite{CGPST14a} where results on an index theory for certain non-Fredholm operators are described using the model operator formalism in \cite{GLMST11}. The latter paper was inspired by \cite{Pu08} which, in turn, was motivated by \cite{RS95}, where 
the relationship between the Fredholm index and spectral flow for operators with discrete spectrum is studied. The model operators considered there provide prototypes  for more complex situations. They arise in connection with investigations of the Maslov index, Morse theory, Floer homology, Sturm oscillation theory, etc. 

The principal aim in \cite{Pu08} and \cite{GLMST11} was to extend the discrete spectrum results of \cite{RS95}, relating the Fredholm index and spectral flow,  to a relatively trace class 
perturbation theory approach, permitting essential spectra. However the relatively trace class  assumption rules out standard differential operators such as Dirac-type operators
and thus, in order to incorporate this important class of examples, we need a more general framework.  In this paper we introduce a new approach that enables us to handle relatively 
Hilbert--Schmidt perturbations.  This  improvement  incorporates 1+1 dimensional differential operators while the methods introduced here are also applicable in more general situations.  Moreover we prove the main result of \cite{GLMST11} by a shorter and simpler method at the same time as generalizing it substantially so that it applies to non-Fredholm 
operators.

To introduce the model, let $\{A(t)\}_{t\in\bbR}$ be a family of self-adjoint operators in 
the complex, separable Hilbert space $\cH$, subject to the 
assumption that self-adjoint  limiting operators
\begin{equation} 
  A_+=\lim_{t\to+\infty}A(t), \quad A_-=\lim_{t\to-\infty}A(t)    \lb{1.2}
\end{equation} 
exist in $\cH$ in the norm resolvent sense. We denote by $\bsA$ the operator in 
$L^2(\bbR;\cH)$ defined by  
\begin{align}
&(\bsA f)(t) = A(t) f(t) \, \text{ for a.e.\ $t\in\bbR$,}   \no \\
& f \in \dom(\bsA) = \bigg\{g \in L^2(\bbR;\cH) \,\bigg|\,
g(t)\in \dom(A(t)) \text{ for a.e.\ } t\in\bbR;     \lb{1.1} \\
& \quad t \mapsto A(t)g(t) \text{ is (weakly) measurable;} \,  
\int_{\bbR} dt \, \|A(t) g(t)\|_{\cH}^2 < \infty\bigg\}.   \no 
\end{align}

Next, we introduce the model operator 
\begin{equation}
\bsD_\bsA^{} = \f{d}{dt} + \bsA,
\quad \dom(\bsD_\bsA^{})= \dom(d/dt) \cap \dom(\bsA_-),     \lb{1.5}   \\
\end{equation}
and the associated nonnegative, self-adjoint operators 
\begin{equation}
\bsH_1 = \bsD_\bsA^* \bsD_\bsA^{}, \quad \bsH_2 = \bsD_\bsA^{} \bsD_\bsA^*,   
\lb{1.6}
\end{equation}
in $L^2(\bbR;\cH)$. (Here $\bsA_-$  in $L^2(\bbR;\cH)$ represents the self-adjoint constant fiber operator defined according to \eqref{1.1}, with $A(t)$ replaced by the asymptote $A_-$). 

Assuming that $A_-$ and $A_+$ are boundedly invertible, we also recall (cf.\ \cite{GLMST11}) that
$\bsD_\bsA^{}$ is a Fredholm operator in $L^2(\bbR;\cH)$. 
Under the additional relatively trace class assumption, that is, 
$(A_+-A_-)\big(A_-^2 + I_{\cH}\big)^{-1/2}$ is trace class, it is shown in 
\cite{GLMST11} (and earlier in \cite{Pu08} under a more stringent set of hypotheses on the family 
$A(\cdot)$), that the Fredholm index of $\bsD_\bsA^{}$ may then be computed as follows,   
\begin{align}
\ind (\bsD_\bsA^{}) & = \xi(0_+; \bsH_2, \bsH_1)   
 = \xi(0; A_+, A_-).   \lb{1.10} 
\end{align}
Here $\xi(\, \cdot \, ; S_2,S_1)$ denotes the spectral shift function for 
the pair of self-adjoint operators $(S_2, S_1)$. Whenever $S_j$, $j=1,2$, are bounded from below, 
we adhere to the normalization
\begin{equation}
\xi(\lambda ; S_2,S_1) = 0 \, \text{ for } \, \lambda < \inf(\sigma(S_1) \cup \sigma(S_2)),  
\end{equation}  
in particular, $\xi(\lambda; \bsH_2, \bsH_1) = 0$, $\lambda < 0$.

The new direction developed in \cite{CGPST14a} focuses on the model operator 
$\bsD_\bsA^{}$ in $L^2(\bbR;\cH)$ whenever the latter ceases to be Fredholm. 
First, we recall the definition of the Witten index as studied 
in \cite{BGGSS87} and \cite{GS88}: 
\begin{equation}
W_r(\bsD_\bsA^{}) = \lim_{\lambda \uparrow 0} (- \lambda) 
\tr_{L^2(\bbR; \cH)}\big((\bsH_1 - \lambda \bsI)^{-1} - (\bsH_2 - \lambda \bsI)^{-1}\big),     \lb{1.13a}
\end{equation} 
whenever the limit exists (where $\tr_{\cK}(\cdot)$ abbreviates the trace in the Hilbert space $\cK$). Here, the subscript ``r'' refers to the resolvent regularization used; other regularizations, for instance, semigroup (heat kernel) based ones, are 
possible (cf., \cite{CGPST14a}).  

If $\bsD_\bsA^{}$ is Fredholm (and of course the necessary trace class conditions in \eqref{1.13a} are satisfied), one has consistency with the Fredholm index
of $\bsD_\bsA^{}$. In addition, under appropriate spectral assumptions  the following connection between Fredholm, respectively, Witten indices and the underlying spectral shift function applies
\begin{equation}
W_r(\bsD_\bsA^{}) = \xi(0_+; \bsH_2, \bsH_1).    \lb{1.13c}
\end{equation} 
Most importantly, $W_r(\bsD_\bsA^{})$ exhibits invariance properties under 
additive, relatively trace class perturbations (apart from some additional technical 
hypotheses). This is sometimes dubbed topological invariance of the Witten index 
in the pertinent literature (see, e.g., \cite{BGGSS87}, \cite{Ca78}, \cite{GLMST11}, \cite{GS88}, and the references therein).

Originally, index regularizations such as \eqref{1.13a} were studied in the context of 
supersymmetric quantum mechanics in the physics literature in the 1970's and 1980's, see, \cite{CGPST14a} for details.

The results of \cite{CGPST14a} for the
specific model operator $\bsD_\bsA^{}$ in $L^2(\bbR;\cH)$ are as follows. 
Assuming that $0$ is a right 
and a left Lebesgue point of $\xi(\,\cdot\,\, ; A_+, A_-)$ (denoted by $\Lxi(0_+; A_+,A_-)$ 
and $\Lxi(0_-; A_+, A_-)$, respectively), then it is 
also a right Lebesgue point of $\xi(\,\cdot\,\, ; \bsH_2, \bsH_1)$ (denoted by 
$\Lxi(0_+; \bsH_2, \bsH_1)$). Under this right/left Lebesgue point 
assumption on $\xi(\,\cdot\,\, ; A_+, A_-)$, the analog of \eqref{1.5} and \eqref{1.6} in the 
general case where $\bsD_\bsA^{}$ ceases to be Fredholm, the  principal new 
result of \cite{CGPST14a}, then reads as follows, 
\begin{align} 
W_r(\bsD_\bsA^{}) & = \Lxi(0_+; \bsH_2, \bsH_1) = [\Lxi(0_+; A_+,A_-) + \Lxi(0_-; A_+, A_-)]/2.    \lb{1.16}
\end{align}

Recalling our earlier mention of Dirac operators, let $A_-$ be the flat space Dirac operator on spinor valued functions on
$\R^n$ and perturb it by a multiplication operator by a function $\phi:\R^n\to \R$ (matrix-valued functions also
provide examples) to define the operator $A_+=A_-+\phi$. It follows from the discussion in Remark~(c) of \cite[Chapter~4]{Si05}
that, for $\phi$ of sufficiently rapid decay at $\pm\infty$, we can show that
$(A_+-A_-)(A_-^2 + I_{\cH})^{-s/2}$ is trace class for $s>n$ but for no lesser 
value of $s$. Thus, even in one dimension the relatively trace class perturbation assumption is violated for geometric examples based on Dirac-type operators.

{\it Our main objective in this paper is to extend the results described above to situations in which
the relatively trace class perturbation assumption no longer holds, replacing it by 
Hypothesis \ref{h3.4} below.}
The new technique described here is an approximation argument that amounts,
for differential operators, to using pseudodifferential approximating operators.  Then, for the approximants, the relatively trace class
perturbation condition is restored and the results of our earlier papers on spectral shift functions (\cite{CGPST14}, \cite{CGPST14a}) are available
for use.  So we obtain strong information on the  spectral shift functions for the approximants. Then we find that they may be shown to converge to the spectral shift functions for the original operators,
as the pseudodifferential approximating operators converge in an appropriate (strong resp., norm resolvent) sense. This leads to one of our main results, \eqref{1.16} under the most general 
Hypothesis \eqref{h3.4} to date, which generalizes the main theorem of \cite{GLMST11} 
and the subsequent \cite{CGPST14a}.

While the strategy of this paper applies in higher dimensions (where the relatively Hilbert--Schmidt condition is replaced by a relatively Schatten class condition) further additional ideas are needed to make our approximation scheme work there.   Examples in  \cite{CGK15} illustrate some of the issues.  This matter is  a part of ongoing investigations. 
We remark that the topological meaning of the Witten index is explored in \cite{GS88} and \cite{CK14} while its geometric significance is still under investigation.

\section{The Strategy Employed} \lb{s2} 

In this section we briefly outline the principal new strategy employed in this paper that 
permits us to circumvent the relative trace class Hypothesis~2.1 used in 
\cite{GLMST11} and \cite{CGPST14a}.
Throughout this section we assume that Hypothesis \ref{h3.4} is valid. 

$\mathbf{I}$. Consider the family of 
self-adjoint operators $A(t)$, $t\in \bbR$, with asymptotes $A_{\pm}$ as well as 
$B(t)$, $t \in \bbR$, and $B_{\pm}$, such that 
\begin{equation}
A(t) = A_- + B(t), \quad t \in \bbR. 
\end{equation} 
Introduce $\bsD_\bsA^{} = \f{d}{dt} + \bsA$ in $L^2(\bbR; \cH)$ as in \eqref{1.5}, 
and define $\bsA_-$  in $L^2(\bbR; \cH)$ as  
the self-adjoint (constant fiber) operator defined in \eqref{2.DA-}. 
Next, introduce the operators $\bsH_j$, $j=1,2$, in $L^2(\bbR; \cH)$ by
$\bsH_1 = \bsD_{\bsA}^{*} \bsD_{\bsA}^{}$, $\bsH_2 = \bsD_{\bsA}^{} \bsD_{\bsA}^{*}$ 
as in \eqref{1.6}. In addition, assuming continuous differentiability of the family $B(\cdot)$ and 
introducing $\bsB$ and $\bsB^{\prime}$ in terms of the bounded operator families 
$B(t)$, $B'(t)$, $t \in \bbR$, in analogy to \eqref{1.S}, one can decompose 
$\bsH_j$, $j=1,2$, as follows:
\begin{equation}
\bsH_j = \bsH_0 + \bsB \bsA_- + \bsA_- \bsB + \bsB^2 + (-1)^j \bsB^{\prime}, \quad j =1,2, 
\end{equation}
with 
\begin{equation}
\bsA = \bsA_- + \bsB, \quad \dom(\bsA) = \dom(\bsA_-).  
\end{equation} 

$\mathbf{II}$. Use the approximation
\begin{equation}
A_n(t) = A_- + \chi_n(A_-) B(t) \chi_n(A_-), \quad n \in \bbN, \; t \in \bbR, 
\end{equation}
with  
\begin{equation}
\chi_n(\nu) = \f{n}{(\nu^2 + n^2)^{1/2}}, \quad \nu \in \bbR, \; n \in \bbN, 
\end{equation} 
such that $\slim_{n \to \infty} \chi_n(A_-) = I$.

Given our assumptions on $B(\cdot)$ (cf.\ Hypothesis \ref{h3.4}) one infers that 
\begin{align}
& A_{-,n} = A_-, \quad 
A_{+,n} = A_- + \chi_n(A_-) B_+ \chi_n(A_-), \quad n \in \bbN,   \\
& A_{+,n} - A_- = \chi_n(A_-) B_+ \chi_n(A_-) \in \cB_1(\cH), \quad n \in \bbN,   \\  
& A_n'(t) = B_n'(t) = \chi_n(A_-) B'(t) \chi_n(A_-) \in \cB_1\big(\cH\big), 
\quad n \in \bbN, \; t \in \bbR,  \\
& \int_{\bbR} dt \, \|A_n'(t)\|_{\cB_1(\cH)} < \infty.     \lb{} 
\end{align}
(We denote by $\cB_p(\cH)$ the standard $\ell^p$-based trace ideals, 
$p \geq 1$.) Thus, one also obtains,
\begin{align}
& \bsH_{j,n} = \bsH_0 + \bsB_n \bsA_- + \bsA_- \bsB_n 
+ \bsB_n^2 + (-1)^j \bsB_n^{\prime}, \quad 
\dom(\bsH_{j,n}) = \dom(\bsH_0),   \no \\
& \hspace*{8.5cm}  n \in \bbN, \; j=1,2, 
\end{align}
with
\begin{equation}
\bsB_n = \chi_n(\bsA_-) \bsB \chi_n(\bsA_-), \quad 
\bsB_n^{\prime} = \chi_n(\bsA_-) \bsB^{\prime} \chi_n(\bsA_-), \quad n \in \bbN. 
\end{equation} 

$\mathbf{III}$. As a consequence of step $\mathbf{II}$, the spectral shift functions 
$\xi(\, \cdot \, ; A_{+,n}, A_-)$ and $\xi(\, \cdot \,; \bsH_{2,n}, \bsH_{1,n})$, $n \in \bbN$, 
exist and are uniquely determined by 
\begin{equation}
\xi(\, \cdot \, ; A_{+,n}, A_-) \in L^1(\bbR; d\nu), \quad 
\xi(\lambda; \bsH_{2,n}, \bsH_{1,n}) = 0, \;  \lambda < 0, \quad n \in \bbN. 
\end{equation}
Moreover, employing \cite{CGPST14} or \cite{Pu08}, one obtains the approximate trace formula,
\begin{equation}
\int_{[0,\infty)} \f{\xi(\lambda; \bsH_{2,n}, \bsH_{1,n}) \, d\lambda}{(\lambda - z)^2} 
= \f{1}{2} \int_{\bbR} \f{\xi(\nu; A_{+,n}, A_-) \, d\nu}{(\nu^2 -z)^{3/2}}, \quad 
n \in \bbN, \; z \in \bbC \backslash [0,\infty).       \lb{1}
\end{equation}

$\mathbf{IV}$. {\bf The main result}. Now we take the limits $n \to \infty$ in \eqref{1}. We use the trace norm convergence result in Theorem \ref{t3.7} in combination with some Fredholm determinant facts to control the limit $n \to \infty$ of the left-hand and right-hand side of \eqref{1} to arrive at
\begin{equation}
\int_{[0,\infty)} \f{\xi(\lambda; \bsH_2, \bsH_1) \, d\lambda}{(\lambda - z)^2} 
= \f{1}{2} \int_{\bbR} \f{\xi(\nu; A_+, A_-) \, d\nu}{(\nu^2 -z)^{3/2}}, \quad 
z \in \bbC \backslash [0,\infty).      \lb{3}
\end{equation}
Relation \eqref{3} combined with a Stieltjes inversion argument then  implies the main formula of the paper (a Pushnitski-type relation between spectral shift functions):
\begin{equation}
\xi(\lambda; \bsH_2, \bsH_1) = \f{1}{\pi} \int_{- \lambda^{1/2}}^{\lambda^{1/2}} 
\f{\xi(\nu; A_+, A_-) \, d \nu}{(\lambda - \nu^2)^{1/2}} \, 
\text{ for a.e.~$\lambda > 0$.}    \lb{4} 
\end{equation}

As a result, one of the principal theorems proven in this paper then proves the equalities in 
\eqref{1.16}. In particular, Equations \eqref{3}, \eqref{4}, and \eqref{1.16} represent the analog 
of the principal results in \cite{GLMST11} and \cite{CGPST14a} for the model operator 
$\bsD_\bsA^{}$, but now under considerably more general hypotheses. 

\smallskip

{\bf Notation.} We briefly summarize some of the notation used in this paper: 
Let $\cH$ be a separable complex Hilbert space, $(\cdot,\cdot)_{\cH}$ the scalar product in $\cH$
(linear in the second argument), and $I_{\cH}$ the identity operator in $\cH$.

Next, if $T$ is a linear operator mapping (a subspace of) a Hilbert space into another, then 
$\dom(T)$ and $\ker(T)$ denote the domain and kernel (i.e., null space) of $T$. 
The closure of a closable operator $S$ is denoted by $\ol S$. 
The spectrum, essential spectrum, discrete spectrum, point spectrum, and resolvent set 
of a closed linear operator in a Hilbert space will be denoted by $\sigma(\cdot)$, 
$\sigma_{\rm ess}(\cdot)$, $\sigma_{\rm d}(\cdot)$, $\sigma_{\rm p}(\cdot)$, and 
$\rho(\cdot)$, respectively. 

The convergence of bounded operators in the strong operator topology (i.e., pointwise limits) will be denoted by $\slim$, similarly, norm limits of bounded operators are denoted by $\nlim$. 


The Banach spaces of bounded and compact linear operators on a separable complex Hilbert space $\cH$ are denoted by $\cB(\cH)$ and $\cB_\infty(\cH)$, respectively; the corresponding $\ell^p$-based trace ideals will be denoted by $\cB_p (\cH)$, $p>0$. Moreover, ${\det}_{\cH}(I_\cK-A)$, and $\tr_{\cH}(A)$ denote the standard Fredholm determinant and the corresponding trace of a trace class operator $A\in\cB_1(\cH)$. 

Linear operators in the Hilbert space $L^2(\bbR; dt; \cH)$, in short, $L^2(\bbR; \cH)$, will be denoted by calligraphic boldface symbols of the type $\bsT$, to distinguish them from 
operators $T$ in $\cH$. In particular, operators denoted by 
$\bsT$ in the Hilbert space $L^2(\bbR;\cH)$ typically represent operators associated with a 
family of operators $\{T(t)\}_{t\in\bbR}$ in $\cH$, defined by
\begin{align}
&(\bsT f)(t) = T(t) f(t) \, \text{ for a.e.\ $t\in\bbR$,}    \no \\
& f \in \dom(\bsT) = \bigg\{g \in L^2(\bbR;\cH) \,\bigg|\,
g(t)\in \dom(T(t)) \text{ for a.e.\ } t\in\bbR;    \lb{1.S}  \\
& \quad t \mapsto T(t)g(t) \text{ is (weakly) measurable;} \, 
\int_{\bbR} dt \, \|T(t) g(t)\|_{\cH}^2 <  \infty\bigg\}.   \no
\end{align}
In the special case, where $\{T(t)\}$ is a family of bounded operators on $\cH$ with 
$\sup_{t\in\bbR}\|T(t)\|_{\cB(\cH)}<\infty$, the associated operator $\bsT$ is a bounded operator on $L^2(\bbR;\cH)$ with $\|\bsT\|_{\cB(L^2(\bbR;\cH))} = \sup_{t\in\bbR}\|T(t)\|_{\cB(\cH)}$.

For brevity we will abbreviate $I:= I_{\cH}$ and $\bsI := I_{L^2(\bbR; \cH)}$.

\section{The Basic Setup} \lb{s3}

In this section we set the stage for an extension of 
\cite{GLMST11} that circumvents the apparently fundamental relative trace class condition 
in Hypothesis~2.1 in \cite{GLMST11} and \cite{CGPST14a}.

We start by collecting our principal assumptions:

\begin{hypothesis} \lb{h3.1} Suppose $\cH$ is a complex, separable Hilbert space. \\
$(i)$ Assume $A_-$ is self-adjoint on $\dom(A_-) \subseteq \cH$. \\
$(ii)$ Suppose we have a family of bounded operators 
$\{B(t)\}_{t \in \bbR} \subset \cB(\cH)$, continuously differentiable in norm on $\bbR$ 
such that 
\begin{equation}
\|B'(\cdot)\|_{\cB(\cH)} \in L^1(\bbR; dt).     \lb{intB'}
\end{equation}
\end{hypothesis}

Given Hypothesis \ref{h3.1}, we introduce the family of self-adjoint operators 
$A(t)$, $t \in \bbR$, in $\cH$, by 
\begin{equation}
A(t) = A_- + B(t), \quad \dom(A(t)) = \dom(A_-), \quad t \in \bbR.
\end{equation}
Moreover, writing
\begin{equation}
B(t) = B(t_0) + \int_{t_0}^t ds \, B'(s), \quad t, t_0 \in \bbR, 
\end{equation}
one infers that the self-adjoint asymptotes 
\begin{equation} 
\nlim_{t \to \pm \infty} B(t) := B_{\pm} \in \cB(\cH)     \lb{limB}
\end{equation}  
exist. In particular, we will make the choice
\begin{equation}
B_- = 0
\end{equation}
in the following and also introduce the asymptote, 
\begin{equation}
A_+ = A_- + B_+, \quad \dom(A_+) = \dom(A_-).    \lb{3.4} 
\end{equation}
Assumption \eqref{intB'} also yields,
\begin{equation}
\sup_{t \in \bbR} \|B(t)\|_{\cB(\cH)} \leq \int_{\bbR} dt \, \|B'(t)\|_{\cB(\cH)}< \infty.     \lb{supB} 
\end{equation}
A simple application of the resolvent identity yields (with $t \in \bbR$, 
$z \in \bbC \backslash \bbR$)
\begin{align}
& (A(t) - z I)^{-1} = (A_{\pm} - z I)^{-1} 
- (A(t) - z I)^{-1} [B(t) - B_{\pm}] (A_{\pm} - z I)^{-1},   \\
& \big\|(A(t) - z I)^{-1} - (A_{\pm} - z I)^{-1}\big\|_{\cB(\cH)} \leq 
|\Im(z)|^{-2} \|B(t) - B_{\pm}\|_{\cB(\cH)},
\end{align}
and hence proves that 
\begin{equation}
\nlim_{t \to \pm \infty} (A(t) - z I)^{-1} = (A_{\pm} - z I)^{-1}, \quad 
z \in \bbC \backslash \bbR. 
\end{equation}

At this point we need to introduce additional hypotheses to those in 
Hypothesis \ref{h3.1}. These additional requirements can be accommodated for differential operators in low dimensions.  We know that a weakening of the next hypothesis is needed for higher-dimensional geometric examples such as those provided by Dirac-type operators. 

\begin{hypothesis} \lb{h3.2} In addition to Hypothesis \ref{h3.1}, assume the following
conditions on $B_+$, $B(t)$, $t \in \bbR$: \\
$(i)$ Suppose that $|B_+|^{1/2}(A_- -z_0 I)^{-1} \in \cB_2(\cH)$ for some $($and hence 
for all\,$)$ $z_0 \in \rho(A_-)$. \\
$(ii)$ Assume that $\sup_{t \in \bbR} \|B'(t)\|_{\cB(\cH)} < \infty$.
\end{hypothesis}

\begin{remark} For Dirac-type operators, the Hilbert--Schmidt condition in Hypothesis \ref{h3.2} is directly tied to the fact that we are considering differences of resolvents in \eqref{3.32}.  We know from \cite{CGK15} that we need to consider differences of higher powers of resolvents in 
higher-dimensional examples whose treatment is deferred to future investigations.
\hfill$\diamond$
\end{remark}

Assuming Hypothesis \ref{h3.2} in the following, the resolvent identity 
\begin{equation}
(A_+ - z I)^{-1} = (A_- - z I)^{-1} 
- (A_+ - z I)^{-1} B_+ (A_- - z I)^{-1},  \quad z \in \bbC \backslash \bbR, 
\end{equation}
combined with Hypothesis \ref{h3.2}\,$(i)$ yields 
\begin{align}
& (A_+ - z I)^{-1} - (A_- - z I)^{-1} = \big[(A_- - \ol{z} I) (A_+ - \ol{z} I)^{-1}\big]^*
\big[|B_+|^{1/2} (A_- - \ol{z} I)^{-1}\big]^*   \no \\ 
& \quad \times \sgn(B_+) \big[|B_+|^{1/2} (A_- - z I)^{-1}\big], 
\quad z \in \bbC \backslash \bbR, 
\end{align}
and hence 
\begin{equation}
\big[(A_+ - z I)^{-1} - (A_- - z I)^{-1}\big] \in \cB_1\big(\cH\big), 
\quad z \in \bbC \backslash \bbR.      \lb{3.13} 
\end{equation} 

Next, we turn to the pair $(\bsH_2, \bsH_1)$, assuming Hypothesis \ref{h3.2}: 
First, we recall that $\bsA$, $\bsB, \bsA '=\bsB', $ 
are defined in terms of the families $A(t)$, $B(t)$, and $B'(t)$, $t\in\bbR$, as in \eqref{1.S}. In addition, $\bsA_-$  in $L^2(\bbR;\cH)$ represents 
the self-adjoint (constant fiber) operator defined by 
\begin{align}
&(\bsA_- f)(t) = A_- f(t) \, \text{ for a.e.\ $t\in\bbR$,}   \no \\
& f \in \dom(\bsA_-) = \bigg\{g \in L^2(\bbR;\cH) \,\bigg|\,
g(t)\in \dom(A_-) \text{ for a.e.\ } t\in\bbR,    \no \\
& \quad t \mapsto A_- g(t) \text{ is (weakly) measurable,} \,  
\int_{\bbR} dt \, \|A_- g(t)\|_{\cH}^2 < \infty\bigg\}.    \lb{2.DA-}
\end{align} 
Now we introduce the operator $\bsD_\bsA^{}$ in $L^2(\bbR;\cH)$ by 
\begin{equation}
\bsD_\bsA^{} = \f{d}{dt} + \bsA,
\quad \dom(\bsD_\bsA^{})= W^{1,2}(\bbR; \cH) \cap \dom(\bsA_-),   \lb{2.DA}
\end{equation} 
where we used that \eqref{supB} implies
\begin{equation}
\|\bsB\|_{\cB(L^2(\bbR; \cH))} = \sup_{t \in \bbR} \|B(t)\|_{\cB(\cH)} < \infty
\end{equation}
and hence
\begin{equation}
\bsA = \bsA_- + \bsB, \quad \dom(\bsA) = \dom(\bsA_-). 
\end{equation}

Here the operator $d/dt$ in $L^2(\bbR;\cH)$  is defined by 
\begin{align}
& \bigg(\f{d}{dt}f\bigg)(t) = f'(t) \, \text{ for a.e.\ $t\in\bbR$,}    \no \\
& \, f \in \dom(d/dt) = \big\{g \in L^2(\bbR;\cH) \, \big|\,
g \in AC_{\loc}\big(\bbR; \cH\big), \, g' \in L^2(\bbR;\cH)\big\}   \no \\
& \hspace*{2.3cm} = W^{1,2} \big(\bbR; \cH\big).     \label{2.ddt} 
\end{align} 
Clearly (cf.\ \cite[Lemma 4.4]{GLMST11}, which also holds for our more general setting), 
$\bsD_\bsA^{}$ is densely defined and closed in $L^2\big(\bbR; \cH\big)$  and the adjoint 
operator $\bsD_\bsA^*$ of $\bsD_\bsA^{}$ is given by
\begin{equation}
\bsD_\bsA^*=- \f{d}{dt} + \bsA, \quad
\dom(\bsD_\bsA^*) = W^{1,2}(\bbR; \cH) \cap \dom(\bsA_-).   
\end{equation}

This enables one to introduce the nonnegative, self-adjoint operators 
$\bsH_j$, $j=1,2$, in $L^2(\bbR;\cH)$ by
\begin{equation}
\bsH_1 = \bsD_{\bsA}^{*} \bsD_{\bsA}^{}, \quad 
\bsH_2 = \bsD_{\bsA}^{} \bsD_{\bsA}^{*}.
\end{equation} 

In order to effectively describe the domains of $\bsH_j$, $j=1,2$, we need to 
decompose the latter as follows:  First, we introduce $\bsB^{\prime}$ in terms of 
the bounded operator families $B'(t)$, $t \in \bbR$, in analogy to \eqref{1.S}, 
and observe that Hypothesis \ref{h3.2}\,$(ii)$ implies that
\begin{equation}
\|\bsB'\|_{\cB(L^2(\bbR; \cH))} = \sup_{t \in \bbR} \|B'(t)\|_{\cB(\cH)} < \infty.  \lb{B'bd}
\end{equation}
Next, we strengthen our hypothesis on $B(t)$, $t \in \bbR$, as follows. Introduce 
$\bsH_0$ in $L^2(\bbR; \cH)$ by  
\begin{equation}
\bsH_0 = - \f{d^2}{dt^2} + \bsA_-^2, \quad \dom(\bsH_0) 
= W^{2,2}(\bbR; \cH) \cap \dom\big(\bsA_-^2\big).     \lb{H0}
\end{equation}
Then $\bsH_0$ is self-adjoint by Theorem VIII.33 of \cite{RS80}.

Again we need to make some additional hypotheses motivated by differential operator
examples.
\begin{hypothesis} \lb{h3.3} In addition to Hypotheses \ref{h3.2}, 
assume the following conditions on $B(t)$, $t \in \bbR$: \\
$(i)$ Suppose that $\bsA_- \bsB$ is bounded with respect to $\bsH_0$ with bound strictly 
less than one, that is, there exists $a \in (0, 1)$ and $b \in (0,\infty)$ such that 
\begin{equation}
\|\bsA_- \bsB f\|_{L^2(\bbR; \cH)} \leq a \|\bsH_0 f\|_{L^2(\bbR; \cH)} 
+ b \|f\|_{L^2(\bbR; \cH)}, \quad f \in \dom(\bsH_0).    \lb{3.24} 
\end{equation} 
$(ii)$ Assume that for some $($and hence for all\,$)$ $z_0 \in \bbC \backslash \bbR$, 
\begin{equation}
|\bsB'|^{1/2} (\bsH_0 -z_0 \, \bsI)^{-1} \in \cB_2\big(L^2(\bbR; \cH)\big)  \lb{3.25}
\end{equation}
\end{hypothesis}

\begin{remark} \lb{r3.5} 
While it is clear that $\bsB \bsA_-$ is infinitesimally bounded with respect to $\bsH_0$, 
to prove this it suffices to note that 
\begin{align}
\begin{split} 
& \big\|\bsB \bsA_- (\bsH_0 - z \, \bsI)^{-1}\big\|_{\cB(L^2(\bbR; \cH))} \leq |\Im(z)|^{-1/2}
\big\|\bsB\big\|_{\cB(L^2(\bbR; \cH))}      \lb{BA-} \\
& \quad \times \big\|\bsA_- (\bsH_0 - z \, \bsI)^{-1/2}\big\|_{\cB(L^2(\bbR; \cH))}, 
\quad z \in \bbC \backslash [0,\infty),   
\end{split} 
\end{align}
it is not obvious that $\bsA_- \bsB$ is bounded with respect to $\bsH_0$. For later purpose 
we note that \eqref{3.24} implies the existence of $a' \in (a,1)$ such that 
\begin{equation}
\big\|\bsA_- \bsB (\bsH_0 - z \, \bsI)^{-1}\big\|_{\cB(L^2(\bbR; \cH))} \leq a' < 1   \lb{A-BH-0}
\end{equation} 
for $0 < |\Im(z)|$ sufficiently large. ${}$ \hfill $\diamond$
\end{remark}

Assuming Hypothesis \ref{h3.3} in the following, \eqref{3.24} combined with \eqref{BA-} imply that the operator $ \bsB \bsA_- + \bsA_- \bsB$ is $\bsH_0$-bounded with bound less than one, and therefore, by \cite[Theorem~VI.4.3]{Ka80} the following decomposition of the operators $\bsH_j$, $j=1,2$ holds 
\begin{align}
& \bsH_j = \f{d^2}{dt^2} + \bsA^2 + (-1)^j \bsA^{\prime}    \no \\
& \hspace*{5.5mm} = \bsH_0 + \bsB \bsA_- + \bsA_- \bsB + \bsB^2 + (-1)^j \bsB^{\prime},  
\lb{3.27} \\
& \dom(\bsH_j) = \dom(\bsH_0), \quad j =1,2.      \no 
\end{align} 
In addition, 
\begin{align}
\begin{split} 
& (\bsH_2 - z \, \bsI)^{-1} - (\bsH_1 - z \, \bsI)^{-1} = 
- (\bsH_1 - z \, \bsI)^{-1} [2 \bsB'] (\bsH_2 - z \, \bsI)^{-1},   \lb{3.31} \\
& \quad = -2 \big[|\bsB'|^{1/2} (\bsH_1 - \ol{z} \, \bsI)^{-1}\big]^* \sgn(\bsB') 
|\bsB'|^{1/2} (\bsH_2 - z \, \bsI)^{-1}, \\
& \hspace*{6.25cm} z\in\rho(\bsH_1) \cap \rho(\bsH_2), 
\end{split} 
\end{align}
and using that
\begin{align}
& \ol{(\bsH_1 - z \, \bsI)^{-1} (\bsH_0 - z \, \bsI)} 
= \big[(\bsH_0 - {\ol z} \, \bsI) (\bsH_1 - {\ol z} \, \bsI)^{-1}\big]^*  
\in \cB\big(L^2(\bbR; \cH)\big), \\
& (\bsH_0 - z \, \bsI) (\bsH_2 - z \, \bsI)^{-1} \in \cB\big(L^2(\bbR; \cH)\big), 
\quad z \in \rho(\bsH_1) \cap \rho(\bsH_2),     \lb{3.31a} 
\end{align} 
and assumption of Hypothesis \ref{h3.3} (ii) 
one concludes that 
\begin{equation}
\big[(\bsH_2 - z \, \bsI)^{-1} - (\bsH_1 - z \, \bsI)^{-1}\big] \in 
\cB_1\big(L^2(\bbR; \cH)\big), \quad z \in \rho(\bsH_1) \cap \rho(\bsH_2).  
\lb{3.32}
\end{equation} 

The fact \eqref{3.32} implies that the spectral shift function 
$\xi(\, \cdot \, ; \bsH_2, \bsH_1)$ for the pair $(\bsH_2, \bsH_1)$ is well-defined, satisfies
\begin{equation}
\xi(\, \cdot \, ; \bsH_2, \bsH_1) \in L^1\big(\bbR; (\lambda^2 + 1)^{-1} d\lambda\big), 
\end{equation} 
and since $\bsH_j\geq 0$, $j=1,2$, one uniquely introduces $\xi(\,\cdot\,; \bsH_2,\bsH_1)$ 
by requiring that
\begin{equation}
\xi(\lambda; \bsH_2,\bsH_1) = 0, \quad \lambda < 0,    \lb{2.46c}
\end{equation}
implying the Krein--Lifshitz trace formula,  
\begin{align}
\begin{split}
\tr_{L^2(\bbR;\cH)} \big((\bsH_2 - z \, \bsI)^{-1} - (\bsH_1 - z \, 
\bsI)^{-1}\big)
= - \int_{[0, \infty)}  \frac{\xi(\lambda; \bsH_2, \bsH_1) \, 
d\lambda}{(\lambda -z)^2},&     \\
z\in\bbC\backslash [0,\infty).&    \lb{3.45}
\end{split} 
\end{align} 

Next, we deviate from the approximation procedure originally employed in 
\cite{GLMST11} and \cite{Pu08}. We now introduce 
\begin{equation}
\chi_n(\nu) = \f{n}{(\nu^2 + n^2)^{1/2}}, \quad \nu \in \bbR, \; n \in \bbN,   \lb{3.chin}
\end{equation} 
and hence obtain  
\begin{equation}
\slim_{n \to \infty} \chi_n(A_-) = I,     \lb{slim} 
\end{equation}
by an elementary application of the spectral theorem for $A_-$. The precise form 
of $\chi_n$ is of course immaterial, we just need property \eqref{slim} and  
property \eqref{intB'n} below. 

We introduce 
\begin{align}
\begin{split} 
& A_n(t) = A_- + \chi_n(A_-) B(t) \chi_n(A_-) = A_- + B_n(t), \\
& \dom(A_n(t)) = \dom(A_-), \quad n \in \bbN, \; t \in \bbR,    \lb{Ant}
\end{split} 
\end{align}
where 
\begin{equation} 
B_n(t) = \chi_n(A_-) B(t) \chi_n(A_-), \quad n \in \bbN, \; t \in \bbR.  
\end{equation}
In addition, we introduce  
\begin{equation}
A_{-,n} = A_-, \quad  \dom(A_{-,n}) = \dom(A_-), \quad n \in \bbN, 
\end{equation} 
and conclude that 
\begin{align} 
& A_{+,n} = A_- + \chi_n(A_-) B_+ \chi_n(A_-), \quad \dom(A_{+,n}) = \dom(A_-), 
\quad n \in \bbN,   \lb{A+n} \\
& A_{+,n} - A_- = \chi_n(A_-) B_+ \chi_n(A_-) \in \cB_1(\cH), \quad n \in \bbN,    \lb{B1An} \\ 
& A_n'(t) = B_n'(t) = \chi_n(A_-) B'(t) \chi_n(A_-) \in \cB_1\big(\cH\big), 
\quad n \in \bbN, \; t \in \bbR.    \lb{Antprime}
\end{align}

As a consequence of \eqref{B1An}, the spectral shift functions 
$\xi(\, \cdot \, ; A_{+,n}, A_-)$, $n \in \bbN$, exist and are uniquely determined by 
\begin{equation}
\xi(\, \cdot \, ; A_{+,n}, A_-) \in L^1(\bbR; d\nu), \quad n \in \bbN. 
\end{equation}

As a preparation to study various limits in Schatten--von Neumann ideals, we now recall 
the following standard convergence property for trace ideals:

\begin{lemma}\lb{l3.6}
Let $p\in[1,\infty)$ and assume that $R,R_n,T,T_n\in\cB(\cH)$, 
$n\in\bbN$, satisfy
$\slim_{n\to\infty}R_n = R$  and $\slim_{n\to \infty}T_n = T$ and that
$S,S_n\in\cB_p(\cH)$, $n\in\bbN$, satisfy 
$\lim_{n\to\infty}\|S_n-S\|_{\cB_p(\cH)}=0$.
Then $\lim_{n\to\infty}\|R_n S_n T_n^\ast - R S T^\ast\|_{\cB_p(\cH)}=0$.
\end{lemma}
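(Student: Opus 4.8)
\emph{Proof strategy.} The statement reduces to a single elementary fact about Schatten ideals, which I would isolate first and then combine with a three–term telescoping estimate and the uniform boundedness principle. As a preliminary, since $\slim_{n\to\infty}R_n = R$ and $\slim_{n\to\infty}T_n = T$, the uniform boundedness principle yields $C := \sup_{n\in\bbN}\max\{\|R_n\|_{\cB(\cH)},\|T_n\|_{\cB(\cH)}\} < \infty$, and of course $\|R\|_{\cB(\cH)},\|T\|_{\cB(\cH)} \leq C$ as well.

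The key lemma I would establish is: if $Q_n\to Q$ in the strong operator topology and $S\in\cB_p(\cH)$, then $\lim_{n\to\infty}\|Q_n S - Q S\|_{\cB_p(\cH)} = 0$. I would prove this first for finite–rank $S = \sum_{k=1}^{m}(\phi_k,\,\cdot\,)_{\cH}\,\psi_k$, in which case $(Q_n - Q) S = \sum_{k=1}^{m}(\phi_k,\,\cdot\,)_{\cH}\,(Q_n - Q)\psi_k$ has rank at most $m$, so that $\|Q_n S - Q S\|_{\cB_p(\cH)} \leq \sum_{k=1}^{m}\|\phi_k\|_{\cH}\,\|(Q_n - Q)\psi_k\|_{\cH}\to 0$; for general $S\in\cB_p(\cH)$ I would approximate $S$ by finite–rank operators in $\cB_p(\cH)$–norm, controlling the error via the ideal bound $\|Q_n X\|_{\cB_p(\cH)}\leq\|Q_n\|_{\cB(\cH)}\|X\|_{\cB_p(\cH)}$ and passing to the limit (using the already established boundedness of $(Q_n)$). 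Taking adjoints and using $\|Y^{*}\|_{\cB_p(\cH)} = \|Y\|_{\cB_p(\cH)}$ together with $S^{*}\in\cB_p(\cH)$ then also gives $\lim_{n\to\infty}\|S Q_n^{*} - S Q^{*}\|_{\cB_p(\cH)} = \lim_{n\to\infty}\|Q_n S^{*} - Q S^{*}\|_{\cB_p(\cH)} = 0$.

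Finally I would write the telescoping identity
\[
R_n S_n T_n^{*} - R S T^{*} = R_n(S_n - S)T_n^{*} + R_n S(T_n^{*} - T^{*}) + (R_n - R)S T^{*}
\]
and estimate the three summands in $\cB_p(\cH)$: the first by $C^{2}\|S_n - S\|_{\cB_p(\cH)}\to 0$ by hypothesis; the second by $C\,\|S T_n^{*} - S T^{*}\|_{\cB_p(\cH)}\to 0$ by the key lemma (applied with $Q_n = T_n$, $Q = T$, after taking adjoints); the third by $\|(R_n - R)S\|_{\cB_p(\cH)}\,\|T\|_{\cB(\cH)}\to 0$ by the key lemma (applied with $Q_n = R_n - R$, $Q = 0$). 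Summing the three bounds gives the assertion. The only point requiring genuine work is the key lemma, and there the entire content is the rank–one computation above together with the density of finite–rank operators in $\cB_p(\cH)$; I anticipate no real obstacle. Indeed this convergence property is completely standard (see, e.g., \cite{Si05}), so one could equally well replace the argument by a citation.
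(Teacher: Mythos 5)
Your proof is correct. The paper itself offers no proof of Lemma \ref{l3.6} — it simply recalls the statement as a ``standard convergence property for trace ideals'' (it is essentially Gr\"umm's theorem and its $\cB_p(\cH)$ generalization, cf.\ \cite{Si05}) — and your argument (uniform boundedness, the finite-rank/density lemma for $\slim$ composed with a fixed $\cB_p(\cH)$ element, and the three-term telescoping) is precisely the standard proof the authors are implicitly invoking.
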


As a first of several convergence results we state the following useful fact.

\begin{lemma}\lb{l3.4a}
Assume Hypotheses \ref{h3.1} and  \ref{h3.2}\,$(i)$. Then, 
\begin{equation}
\lim_{n \to \infty} \big\|(A_{+,n} - z I)^{-1} - (A_+ - z I)^{-1}\big\|_{\cB_1(\cH)} = 0, 
\quad z \in \bbC \backslash \bbR.     \lb{limB-1}
\end{equation}
\end{lemma}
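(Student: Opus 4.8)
The plan is to express both resolvent differences through the Hilbert--Schmidt factors supplied by Hypothesis~\ref{h3.2}\,$(i)$ and then to pass to the limit $n\to\infty$ with the help of the elementary trace-ideal convergence Lemma~\ref{l3.6}. Throughout, fix $z \in \bbC\backslash\bbR$, abbreviate $R_-(z) = (A_- - zI)^{-1}$ and $U = |B_+|^{1/2} \in \cB(\cH)$, and note $B_+ = U\,\sgn(B_+)\,U$ since $\sgn(B_+)$ and $|B_+|$ commute. Since $\bbC\backslash\bbR \subseteq \rho(A_-)$, Hypothesis~\ref{h3.2}\,$(i)$ gives $U R_-(z) \in \cB_2(\cH)$, hence also $R_-(z) U = \big(U R_-(\ol z)\big)^* \in \cB_2(\cH)$.

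First I would record the basic Hilbert--Schmidt convergence. Since $\chi_n(A_-)$ commutes with $R_-(z)$, is self-adjoint, and $\slim_{n\to\infty}\chi_n(A_-) = I$ by \eqref{slim}, Lemma~\ref{l3.6} (with $p = 2$, the fixed operator $U R_-(z)\in\cB_2(\cH)$, and the strongly convergent factor $\chi_n(A_-)$) yields
\begin{equation*}
\lim_{n\to\infty}\big\|\chi_n(A_-)\big(R_-(z)U\big) - R_-(z)U\big\|_{\cB_2(\cH)} = 0, \qquad
\lim_{n\to\infty}\big\|\big(U R_-(z)\big)\chi_n(A_-) - U R_-(z)\big\|_{\cB_2(\cH)} = 0.
\end{equation*}

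Next I would expand the resolvent difference. Using $A_{+,n} - A_- = \chi_n(A_-) B_+ \chi_n(A_-)$ (cf.\ \eqref{B1An}), the resolvent identity gives $(A_{+,n}-zI)^{-1} = G_n(z) R_-(z)$ with
\begin{equation*}
G_n(z) := I - (A_{+,n}-zI)^{-1}\chi_n(A_-) B_+ \chi_n(A_-) \in \cB(\cH), \qquad \sup_{n\in\bbN}\big\|G_n(z)\big\|_{\cB(\cH)} < \infty,
\end{equation*}
the uniform bound following from $\big\|(A_{+,n}-zI)^{-1}\big\|_{\cB(\cH)} \le |\Im(z)|^{-1}$ and $B_+ \in \cB(\cH)$. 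Substituting $B_+ = U\sgn(B_+)U$ and moving the (commuting) factors $\chi_n(A_-)$ past $R_-(z)$, one arrives at the factorization
\begin{equation*}
(A_{+,n}-zI)^{-1} - R_-(z) = - G_n(z)\, S_n(z), \qquad S_n(z) := \big[\chi_n(A_-)\big(R_-(z)U\big)\big]\, \sgn(B_+)\, \big[\big(U R_-(z)\big)\chi_n(A_-)\big].
\end{equation*}
Setting $\chi_n(A_-) = I$ throughout gives the corresponding identity $(A_+ - zI)^{-1} - R_-(z) = -G(z)\, S(z)$ with $G(z) = I - (A_+ - zI)^{-1} B_+$ and $S(z) = \big(R_-(z)U\big)\sgn(B_+)\big(U R_-(z)\big)$; this is precisely the factorization underlying \eqref{3.13}.

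Finally I would take $n\to\infty$. By the first step and the H\"older inequality for trace ideals, $S_n(z) \to S(z)$ in $\cB_1(\cH)$. For $G_n(z)$, observe that $\chi_n(A_-) B_+ \chi_n(A_-) \to B_+$ in the strong operator topology (with uniform bound $\|B_+\|_{\cB(\cH)}$), whence a routine perturbation argument shows $A_{+,n} \to A_+$ in the strong resolvent sense, and therefore $G_n(z) \to G(z)$ strongly. Now Lemma~\ref{l3.6} with $p = 1$, $R_n = G_n(z)$, $T_n = I$, and the $\cB_1(\cH)$-convergent sequence $S_n(z)$ gives $G_n(z) S_n(z) \to G(z) S(z)$ in $\cB_1(\cH)$; subtracting the two factorizations above (the unbounded term $R_-(z)$ cancels) yields
\begin{equation*}
(A_{+,n}-zI)^{-1} - (A_+ - zI)^{-1} = -\big(G_n(z) S_n(z) - G(z) S(z)\big) \longrightarrow 0 \quad\text{in } \cB_1(\cH),
\end{equation*}
which is \eqref{limB-1}.

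The only delicate point is the strong convergence $G_n(z) \to G(z)$: because $B_+$ is merely bounded, $\chi_n(A_-)B_+\chi_n(A_-)$ converges to $B_+$ only strongly rather than in norm, so $A_{+,n}$ converges to $A_+$ only in the strong resolvent sense, and every limit involving $G_n(z)$ must be taken in the strong operator topology. The Hilbert--Schmidt Hypothesis~\ref{h3.2}\,$(i)$ enters solely to manufacture the two $\cB_2(\cH)$ factors in $S_n(z)$ that make the relevant product trace class and trace-norm convergent; everything else is bookkeeping with the resolvent identity and Lemma~\ref{l3.6}.
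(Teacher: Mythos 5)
Your proof is correct and follows essentially the same route as the paper's: both rest on the second resolvent identity to factor the difference through the Hilbert--Schmidt factors $|B_+|^{1/2}(A_- - zI)^{-1}$ supplied by Hypothesis~\ref{h3.2}\,$(i)$, the commutativity of $\chi_n(A_-)$ with $(A_- - zI)^{-1}$, the strong convergence \eqref{slim}, the strong resolvent convergence $A_{+,n}\to A_+$ (which the paper establishes via the explicit Neumann-type formula for $(A_{+,n}-zI)^{-1}$), and Lemma~\ref{l3.6} to pass to the trace-norm limit. The only difference is cosmetic bookkeeping in where the uniformly bounded factor $I - (A_{+,n}-zI)^{-1}\chi_n(A_-)B_+\chi_n(A_-)$ is placed in the factorization.
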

\begin{proof}
One writes
\begin{align}
& (A_{+,n} - z I)^{-1} - (A_+ - z I)^{-1} = \big[(A_{+,n} - z I)^{-1} - (A_- - z I)^{-1}\big]    \no \\
& \qquad - \big[(A_+ - z I)^{-1} - (A_- - z I)^{-1}\big]   \no \\
& \quad = - \chi_n(A_-) (A_- - z I)^{-1} B_+  (A_- - z I)^{-1} \chi_n(A_-) 
\big[(A_- - z I)  (A_{+,n} - z I)^{-1}\big]   \no \\
& \qquad +  (A_- - z I)^{-1} B_+  (A_- - z I)^{-1}
\big[ (A_- - z I)  (A_+ - z I)^{-1}\big]    \no \\
& \quad = - \chi_n(A_-) (A_- - z I)^{-1} B_+ (A_- - z I)^{-1} \chi_n(A_-)    \no \\
& \qquad \times \big[I - \chi_n(A_-) B_+ \chi_n(A_-) (A_{+,n} - z I)^{-1}\big]   \no \\
& \qquad +  (A_- - z I)^{-1} B_+ (A_- - z I)^{-1}
\big[I - B_+ (A_+ - z I)^{-1}\big], \quad z \in \bbC \backslash \bbR.  
\end{align}
Thus, relying on Lemma \ref{l3.6} and  \eqref{slim} it suffices to prove that 
\begin{equation}
\slim_{n \to \infty} (A_{+,n} - z I)^{-1} = (A_+ - z I)^{-1}, \quad z \in \bbC \backslash \bbR, 
\end{equation}
but this immediately follows from
\begin{align}
& (A_{+,n} - z I)^{-1} = \big[I + (A_- - z I)^{-1} \chi_n(A_-) B_+ \chi_n(A_-)\big]^{-1} (A_- - z I)^{-1}, \\
& (A_+ - z I)^{-1} = \big[I + (A_- - z I)^{-1} B_+ \big]^{-1} (A_- - z I)^{-1}, 
\end{align}
employing the fact that strong convergence for a sequence of bounded operators is 
equivalent to strong resolvent convergence, initially, for $|\Im(z)|$ suffficiently large, and subsequently, for all $z \in \bbC \backslash \bbR$ by analytic continuation with respect to $z$. 
\end{proof}

Next, going beyond the approximation $A_{+,n}$ of $A_+$, we now introduce the 
following path $\{A_+(s)\}_{s \in [0,1]}$, where 
\begin{align}
& A_+(s) = A_- + \wti \chi_s (A_-) B_+ \wti \chi_s (A_-), 
\quad \dom(A_+(s)) = \dom(A_-),   \quad s \in [0,1],    \lb{B.102} \\
& \wti \chi_s (\nu) = \big[(1-s) \nu^2 + 1\big]^{-1/2}, \quad \nu \in \bbR, \; s \in [0,1],
\end{align}
in particular, 
\begin{equation} 
A_+(0) = A_{+,1} \text{ (cf.\ \eqref{A+n} with $n=1$) and } \, A_+(1) = A_+. 
\end{equation} 
Moreover, in complete analogy to \eqref{limB-1}, the family 
$A_+(s)$ depends continuously on $s \in [0,1]$ with respect to the metric 
\begin{equation}
d(A,A') = \big\|(A - i I)^{-1} - (A' - i I)^{-1}\big\|_{\cB_1(\cH)}    \lb{B.105}
\end{equation}
for $A, A'$ in the set of self-adjoint operators which are resolvent comparable with 
respect to $A_-$ (equivalently, $A_+$), that is, $A, A'$ satisfy 
for some (and hence for all) $\zeta \in \bbC \backslash \bbR$, 
\begin{equation}
\big[(A - \zeta I)^{-1} - (A_- - \zeta I)^{-1}\big], 
\big[(A' - \zeta I)^{-1} - (A_- - \zeta I)^{-1}\big]  \in \cB_1(\cH).  
\end{equation}
Thus, the hypotheses of \cite[Lemma~8.7.5]{Ya92} are satisfied and hence one 
obtains the following result: 

\begin{theorem} \lb{tB.8} 
Assume Hypotheses \ref{h3.1} and  \ref{h3.2}\,$(i)$ and introduce the path 
$A_+(s)$, $s \in [0,1]$, as in 
\eqref{B.102}, with $A_+(0) = A_{+,1}$ $($cf.\ \eqref{A+n} with $n=1$$)$ and $A_+(1) = A_+$. 
Then for each $s \in [0,1]$, there exists a unique spectral shift function 
$\xi(\, \cdot \,; A_+(s), A_-)$ for the pair $(A_+(s), A_-)$, depending continuously on 
$s \in [0,1]$ in the $L^1\big(\bbR; (\nu^2 +1)^{-1} d\nu\big)$-norm, satisfying  
$\xi(\, \cdot \,; A_+(0), A_-) = \xi(\, \cdot \,; A_{+,1}, A_-)$, and $($cf.\ \eqref{B.36a}$)$, 
\begin{align} 
2 i \int_{\bbR} \f{\xi(\lambda; A_+(s),A_-) d \lambda}{\lambda^2 + 1}   
= {\tr}_{\cH} \big(\ln\big(U_+(s)U_-^{-1}\big)\big),     \lb{B.109}
\end{align} 
where
\begin{align}
U_- = (A_- - i I)(A_- + i I)^{-1}, \quad 
U_+(s) = (A_+(s) - i I)(A_+(s) + i I)^{-1}, \; s \in [0,1].
\end{align} 
In addition $($cf.\ \eqref{B.47a}$)$, 
\begin{equation}
\xi(\, \cdot \,; A_+(s),A_-) \in L^1(\bbR; d\nu), \quad s \in [0,1).   \lb{B.108} 
\end{equation}
\end{theorem}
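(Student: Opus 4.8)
The plan is to deduce the statement from \cite[Lemma~8.7.5]{Ya92}, which, for a path of self-adjoint operators that is continuous in the metric \eqref{B.105} and whose members are all resolvent comparable in $\cB_1(\cH)$ to a single reference operator, guarantees the existence, uniqueness, and $L^1\big(\bbR;(\nu^2+1)^{-1}\,d\nu\big)$-continuity of the associated (suitably normalized) spectral shift functions, together with the representation of $\int_\bbR\xi(\lambda)(\lambda^2+1)^{-1}\,d\lambda$ through the perturbation determinant of the corresponding Cayley transforms. Hence I would verify two facts: \textbf{(a)} for every $s\in[0,1]$ the operator $A_+(s)$ is self-adjoint and $\big[(A_+(s)-\zeta I)^{-1}-(A_--\zeta I)^{-1}\big]\in\cB_1(\cH)$ for one (hence all) $\zeta\in\bbC\backslash\bbR$; and \textbf{(b)} $s\mapsto A_+(s)$ is continuous on $[0,1]$ in the metric $d(\,\cdot\,,\,\cdot\,)$ of \eqref{B.105}. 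Granting (a) and (b), \cite[Lemma~8.7.5]{Ya92} then yields the asserted existence, uniqueness, continuity and the identity \eqref{B.109}; and since $A_+(0)=A_{+,1}$ (cf.\ \eqref{A+n} with $n=1$), uniqueness forces $\xi(\,\cdot\,;A_+(0),A_-)=\xi(\,\cdot\,;A_{+,1},A_-)$.

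For \textbf{(a)}: each $B_+(s):=\wti\chi_s(A_-)B_+\wti\chi_s(A_-)$ is bounded and self-adjoint, so $A_+(s)=A_-+B_+(s)$ is self-adjoint on $\dom(A_-)$, and the computation leading to \eqref{3.13} carries over once one observes that, by Hypothesis \ref{h3.2}\,$(i)$ and the boundedness of $\wti\chi_s(A_-)$, the operator $|B_+|^{1/2}\wti\chi_s(A_-)(A_--\zeta I)^{-1}=\big[|B_+|^{1/2}(A_--\zeta I)^{-1}\big]\wti\chi_s(A_-)$ lies in $\cB_2(\cH)$; this gives $\big[(A_+(s)-\zeta I)^{-1}-(A_--\zeta I)^{-1}\big]\in\cB_1(\cH)$ for $\zeta\in\bbC\backslash\bbR$. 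In addition, for $s\in[0,1)$ the multiplier $\nu\mapsto\wti\chi_s(\nu)(\nu-\zeta)$ is bounded on $\bbR$, hence $\wti\chi_s(A_-)|B_+|^{1/2}=\big[\wti\chi_s(A_-)(A_--\zeta I)\big]\big[(A_--\zeta I)^{-1}|B_+|^{1/2}\big]\in\cB_2(\cH)$, and therefore $A_+(s)-A_-=\big[\wti\chi_s(A_-)|B_+|^{1/2}\big]\sgn(B_+)\big[|B_+|^{1/2}\wti\chi_s(A_-)\big]\in\cB_1(\cH)$; by Krein's trace class theory of the spectral shift function this gives $\xi(\,\cdot\,;A_+(s),A_-)\in L^1(\bbR;d\nu)$ for $s\in[0,1)$, which is \eqref{B.108}.

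For \textbf{(b)}: fix $s_0\in[0,1]$ and let $s\to s_0$. By the spectral theorem for $A_-$, $\slim_{s\to s_0}\wti\chi_s(A_-)=\wti\chi_{s_0}(A_-)$ (for $s_0=1$ this is the analogue of \eqref{slim}, since $\wti\chi_s(\nu)\to1$ pointwise with $0\le\wti\chi_s\le1$); consequently $B_+(s)\to B_+(s_0)$ strongly with uniform bounds and $(A_+(s)-iI)^{-1}\to(A_+(s_0)-iI)^{-1}$ strongly. Writing $(A_+(s)-iI)^{-1}-(A_+(s_0)-iI)^{-1}=-(A_+(s)-iI)^{-1}\big[B_+(s)-B_+(s_0)\big](A_+(s_0)-iI)^{-1}$, inserting the polar decomposition of $B_+$ together with factors $(A_--iI)(A_--iI)^{-1}$ so as to expose the Hilbert--Schmidt pieces, the right-hand side is a finite sum of terms $X_s\,Z\,Y_s$ with $Z$ a fixed bounded operator and $X_s,Y_s$ converging in $\cB_2(\cH)$ as $s\to s_0$ --- the latter being an instance of Lemma \ref{l3.6} applied to the fixed Hilbert--Schmidt operator $|B_+|^{1/2}(A_--iI)^{-1}$ of Hypothesis \ref{h3.2}\,$(i)$ flanked by strongly convergent, uniformly bounded operators. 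Then $X_sZY_s\to X_{s_0}ZY_{s_0}$ in $\cB_1(\cH)$, so $\big\|(A_+(s)-iI)^{-1}-(A_+(s_0)-iI)^{-1}\big\|_{\cB_1(\cH)}\to0$, which is continuity in \eqref{B.105}.

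With (a) and (b) established, \cite[Lemma~8.7.5]{Ya92} delivers all the assertions, \eqref{B.109} being the invariance-principle identity relating $\xi(\,\cdot\,;A_+(s),A_-)$ to $\tr_{\cH}\big(\ln\big(U_+(s)U_-^{-1}\big)\big)$, the trace being well-defined since $U_+(s)-U_-=-2i\big[(A_+(s)+iI)^{-1}-(A_-+iI)^{-1}\big]\in\cB_1(\cH)$ by (a). The step I expect to be the main obstacle is the behaviour at the endpoint $s=1$: there $B_+(1)=B_+$ is no longer trace class and $\wti\chi_s(A_-)\not\to I$ in operator norm, so no norm estimate on $\wti\chi_s(A_-)B_+\wti\chi_s(A_-)-B_+$ is available, and the trace-norm continuity in \eqref{B.105} up to and including $s=1$ has to be obtained solely from the fixed Hilbert--Schmidt operator supplied by Hypothesis \ref{h3.2}\,$(i)$ together with the mere strong convergence $\wti\chi_s(A_-)\to I$, via Lemma \ref{l3.6}; verifying that this argument genuinely produces $\cB_1(\cH)$-convergence of the resolvent difference near $s=1$ is the crux, the remainder being routine manipulation of resolvent identities.
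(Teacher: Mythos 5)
Your proposal is correct and follows essentially the same route as the paper: the paper also verifies that each $A_+(s)-A_-$ factorization through the fixed Hilbert--Schmidt operator $|B_+|^{1/2}(A_--zI)^{-1}$ gives $\cB_1$-resolvent comparability, establishes continuity of $s\mapsto A_+(s)$ in the metric \eqref{B.105} "in complete analogy to \eqref{limB-1}" (i.e., by the strong-convergence-plus-fixed-$\cB_2$-factor mechanism of Lemmas \ref{l3.6} and \ref{l3.4a}, which is exactly your treatment of the delicate endpoint $s=1$), and then invokes \cite[Lemma~8.7.5]{Ya92} together with the trace-class observation for $s\in[0,1)$ to obtain \eqref{B.108} via \eqref{B.47a}. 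No gaps.
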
 

Thus, observing the equality $\chi_n(\cdot) = \wti \chi_{(1 - n^{-2})}(\cdot)$, Theorem \ref{tB.8} 
implies
\begin{align}
0 &= \lim_{s \uparrow 1} \|\xi(\, \cdot \, ; A_+(s), A_-) 
- \xi(\, \cdot \, ; A_+, A_-)\|_{L^1(\bbR; (\nu^2 + 1)^{-1}d\nu)}   \no \\
&=  \lim_{s \uparrow 1} \|\xi(\, \cdot \, ; A_+(s), A_-)  
- \xi(\, \cdot \, ; A_+(1), A_-)\|_{L^1(\bbR; (\nu^2 + 1)^{-1}d\nu)}    \\ 
&= \lim_{n \to \infty} \|\xi(\, \cdot \, ; A_{+,n}, A_-) 
- \xi(\, \cdot \, ; A_+(1), A_-)\|_{L^1(\bbR; (\nu^2 + 1)^{-1}d\nu)}.  
\end{align}
In particular, a subsequence of $\{\xi(\, \cdot \, ; A_{+,n}, A_-)\}_{n \in \bbN}$ converges 
pointwise a.e.\ to $\xi(\, \cdot \, ; A_+, A_-)$ as $n \to \infty$. Thus, the sequence 
$\xi(\, \cdot \, ; A_{+,n}, A_-) \in L^1(\bbR; d\nu)$, $n \in \bbN$, 
naturally enforces a choice for the open constant inherent in $\xi(\, \cdot \, ; A_+, A_-)$ 
determined by $\xi(\, \cdot \, ; A_{+,1}, A_-) = \xi(\, \cdot \, ; A_+(0), A_-)$, which will henceforth 
be adopted for the remainder of this paper. 

We continue with an elementary but useful consequence of Theorem \ref{tB.8}.

\begin{corollary} \lb{cB.9} 
Assume Hypotheses \ref{h3.1} and  \ref{h3.2}\,$(i)$ and suppose that 
$f \in L^{\infty}(\bbR)$. Then
\begin{equation}
\lim_{n \to \infty} \|\xi(\, \cdot \, ; A_{+,n}, A_-) f 
- \xi(\, \cdot \, ; A_+, A_-) f\|_{L^1(\bbR; (\nu^2 + 1)^{-1}d\nu)} = 0,    \lb{B.118}
\end{equation}
in particular,
\begin{equation}
\lim_{n \to \infty} \int_{\bbR} \xi(\nu; A_{+,n}, A_-) d \nu \, g(\nu) 
= \int_{\bbR} \xi(\nu; A_+, A_-) d \nu \, g(\nu)     \lb{B.119}
\end{equation}
for all $g \in L^{\infty}(\bbR)$ such that 
$\esssup_{\nu \in \bbR} \big|(\nu^2 + 1) g(\nu)\big| < \infty$.
\end{corollary}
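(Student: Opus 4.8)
The plan is to deduce both statements directly from Theorem \ref{tB.8}, which (via the identity $\chi_n(\cdot) = \wti\chi_{1-n^{-2}}(\cdot)$ and the normalization of the open additive constant in $\xi(\, \cdot \,; A_+, A_-)$ fixed immediately after that theorem) supplies the convergence
\[
\lim_{n \to \infty}\big\|\xi(\, \cdot \,; A_{+,n}, A_-) - \xi(\, \cdot \,; A_+, A_-)\big\|_{L^1(\bbR; (\nu^2 + 1)^{-1}d\nu)} = 0 .
\]

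First I would establish \eqref{B.118}. For any $f \in L^\infty(\bbR)$, multiplication by $f$ maps $L^1\big(\bbR;(\nu^2+1)^{-1}d\nu\big)$ into itself with operator norm at most $\|f\|_{L^\infty(\bbR)}$, whence
\[
\big\|[\xi(\, \cdot \,; A_{+,n}, A_-) - \xi(\, \cdot \,; A_+, A_-)]f\big\|_{L^1(\bbR;(\nu^2+1)^{-1}d\nu)}
\le \|f\|_{L^\infty(\bbR)}\, \big\|\xi(\, \cdot \,; A_{+,n}, A_-) - \xi(\, \cdot \,; A_+, A_-)\big\|_{L^1(\bbR;(\nu^2+1)^{-1}d\nu)},
\]
and the right-hand side tends to $0$ by Theorem \ref{tB.8}, which yields \eqref{B.118}.

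Next I would deduce \eqref{B.119}. Given $g \in L^\infty(\bbR)$ with $M := \esssup_{\nu\in\bbR}\big|(\nu^2+1)g(\nu)\big| < \infty$, put $f(\nu) := (\nu^2+1)g(\nu)$, so that $f \in L^\infty(\bbR)$, $\|f\|_{L^\infty(\bbR)} = M$, and $g(\nu) = (\nu^2+1)^{-1}f(\nu)$. Since $\xi(\, \cdot \,; A_{+,n}, A_-) \in L^1(\bbR;d\nu)$ for each $n$ and $\xi(\, \cdot \,; A_+, A_-) \in L^1\big(\bbR;(\nu^2+1)^{-1}d\nu\big)$, both integrals in \eqref{B.119} are absolutely convergent, and
\[
\int_{\bbR} \xi(\nu; A_{+,n}, A_-)\, g(\nu)\, d\nu - \int_{\bbR} \xi(\nu; A_+, A_-)\, g(\nu)\, d\nu
= \int_{\bbR} [\xi(\nu; A_{+,n}, A_-) - \xi(\nu; A_+, A_-)]\, f(\nu)\, (\nu^2+1)^{-1}\, d\nu ,
\]
the absolute value of which is dominated by the left-hand side of \eqref{B.118} and hence vanishes as $n \to \infty$.

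The only genuinely substantive point — and the main, rather mild, obstacle — is the bookkeeping with the weight: the decay hypothesis $\esssup_{\nu}|(\nu^2+1)g(\nu)| < \infty$ is precisely what allows one to absorb the factor $(\nu^2+1)$ into an $L^\infty$-function $f$, thereby reducing \eqref{B.119} to the weighted $L^1$-convergence of Theorem \ref{tB.8}. In particular, no compactness, subsequence, or dominated-convergence argument is needed, since Theorem \ref{tB.8} already provides convergence in norm rather than merely along a subsequence.
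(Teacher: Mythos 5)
Your proposal is correct and follows essentially the same route as the paper: \eqref{B.118} via the bound $\|[\xi_n-\xi]f\|_{L^1(\bbR;(\nu^2+1)^{-1}d\nu)} \le \|f\|_{L^\infty}\|\xi_n-\xi\|_{L^1(\bbR;(\nu^2+1)^{-1}d\nu)}$ combined with the norm convergence supplied by Theorem \ref{tB.8}, and \eqref{B.119} by absorbing the weight $(\nu^2+1)$ into the bounded function $f(\nu)=(\nu^2+1)g(\nu)$, which is exactly the paper's decomposition of the measures $\xi(\nu;\cdot,\cdot)\,d\nu\,g(\nu)$ into $(\nu^2+1)^{-1}\xi(\nu;\cdot,\cdot)\,d\nu\,(\nu^2+1)g(\nu)$. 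No gaps.
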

\begin{proof}
Relation \eqref{B.118} is clear from Theorem \ref{tB.8} and 
\begin{align}
\begin{split} 
& \|\xi(\, \cdot \, ; A_{+,n}, A_-) f 
- \xi(\, \cdot \, ; A_+, A_-) f\|_{L^1(\bbR; (\nu^2 + 1)^{-1}d\nu)}    \\
& \quad \leq \|f\|_{L^{\infty}(\bbR)} \, \|\xi(\, \cdot \, ; A_{+,n}, A_-) 
- \xi(\, \cdot \, ; A_+, A_-)\|_{L^1(\bbR; (\nu^2 + 1)^{-1}d\nu)},
\end{split} 
\end{align} 
and \eqref{B.119} is obvious from \eqref{B.118} and decomposing the (complex) measures 
\begin{equation} 
\xi(\nu; A_{+,n}, A_-) d \nu \, g(\nu) \, \text{ and } \, \xi(\nu; A_+, A_-) d \nu \, g(\nu)
\end{equation} 
into
\begin{equation}  
(\nu^2 + 1)^{-1} \xi(\nu; A_{+,n}, A_-) d \nu \, (\nu^2 + 1) g(\nu) \, \text{ and } \,  
(\nu^2 + 1)^{-1} \xi(\nu; A_+, A_-) d \nu \, (\nu^2 + 1) g(\nu).
\end{equation}   
\end{proof}

At this point we introduce one more assumption regarding $B_n^{\prime}(t)$, 
$n \in \bbN$, $t \in \bbR$, and for convenience now collect all our hypotheses at one 
place:

\begin{hypothesis} \lb{h3.4} 
Suppose $\cH$ is a complex, separable Hilbert space. \\
$(i)$ Assume $A_-$ is self-adjoint on $\dom(A_-) \subseteq \cH$. \\
$(ii)$ Suppose we have a family of bounded operators 
$\{B(t)\}_{t \in \bbR} \subset \cB(\cH)$, continuously differentiable in norm on $\bbR$ 
such that 
\begin{equation}
\|B'(\cdot)\|_{\cB(\cH)} \in L^1(\bbR; dt).     \lb{intB'a}
\end{equation} 
$(iii)$ Suppose that $|B_+|^{1/2}(A_- -z_0 I)^{-1} \in \cB_2(\cH)$ for some $($and hence 
for all\,$)$ $z_0 \in \rho(A_-)$. $($Here $B_+ = \nlim_{t \to +\infty} B(t)$.$)$ \\
$(iv)$ Assume that $\sup_{t \in \bbR} \|B'(t)\|_{\cB(\cH)} < \infty$. \\  
$(v)$ Suppose that $\bsA_- \bsB$ is bounded with respect to $\bsH_0$ with bound strictly 
less than one, that is, there exists $0 \leq a < 1$ and $b \in (0,\infty)$ such that 
\begin{equation}
\|\bsA_- \bsB f\|_{L^2(\bbR; \cH)} \leq a \|\bsH_0 f\|_{L^2(\bbR; \cH)} 
+ b \|f\|_{L^2(\bbR; \cH)}, \quad f \in \dom(\bsH_0).
\end{equation} 
$(vi)$ Assume that for some $($and hence for all\,$)$ $z_0 \in \bbC \backslash \bbR$, 
\begin{equation}
|\bsB'|^{1/2} (\bsH_0 -z_0 \bsI)^{-1} \in \cB_2\big(L^2(\bbR; \cH)\big)  \lb{3.25a}
\end{equation} 
$(vii)$ Assume that 
\begin{equation}
B_n'(t) \in \cB_1(\cH), \quad \|B_n'(\cdot)\|_{\cB_1(\cH)} \in L^1(\bbR; dt), \quad 
n \in \bbN, \; t \in \bbR.    \lb{intB'n}
\end{equation}
\end{hypothesis}

\begin{remark}
We note that the final two assumptions $(vi)$ and $(vii)$ in Hypothesis \ref{h3.4} can be derived from the stronger condition 
\begin{equation}\label{rel_H-S}
|B'(t)|^{1/2}(|A_-|+I)^{-1}\in\cB_2(\cH), \quad 
\big\||B'(\cdot)|^{1/2}(|A_-|+I)^{-1}\big\|_{\cB_2(\cH)}\in L^2(\bbR; dt).
\end{equation} 

Indeed, repeating the argument in \cite[Lemma~4.6]{GLMST11}, one can obtain the inclusion 
$|\bsB'|^{1/2} (\bsH_0 -z_0 \bsI)^{-1} \in \cB_2\big(L^2(\bbR; \cH)\big)$. In addition, with 
regard to Hypothesis \ref{h3.4}$\,(vii)$, one obtains  
\begin{align}
&\int_\bbR dt \, \|B'_n(t)\|_{\cB_1(\cH)} 
=\int_\bbR dt \, \|\chi_n(A_-)B'(t)\chi_n(A_-)\|_{\cB_1(\cH)}    \no \\
& \quad \leq \int_\bbR dt \, \big\|\overline{\chi_n(A_-)(|A_-|+I)}\big\|_{\cB(\cH)} \, 
\big\||B'(t)|^{1/2}(|A_-|+I)^{-1}\big\|_{\cB_2(\cH)}^2  \\
& \qquad \times 
\big\|(|A_-|+I)\chi_n(A_-)\big\|_{\cB(\cH)},    \no 
\end{align}
and since $\overline{\chi_n(A_-)(|A_-|+I)}, \, (|A_-|+I)\chi_n(A_-) \in \cB(\cH)$, one infers that 
\begin{equation} 
\int_\bbR dt \, \|B'_n(t)\|_{\cB_1(\cH)} 
\leq \int_\bbR dt \, \big\||B'(t)|^{1/2}(|A_-|+I)^{-1}\big\|_{\cB_2(\cH)}^2 < \infty. 
\end{equation} 
${}$ \hfill $\diamond$
\end{remark}

Assuming Hypothesis \ref{h3.4} from now on, one obtains the decompositions,
\begin{align}
& \bsH_{j,n} = \f{d^2}{dt^2} + \bsA_n^2 + (-1)^j \bsA_n^{\prime}    \no \\
& \hspace*{8mm} = \bsH_0 + \bsB_n \bsA_- + \bsA_- \bsB_n 
+ \bsB_n^2 + (-1)^j \bsB_n^{\prime}, \\
& \dom(\bsH_{j,n}) = \dom(\bsH_0) = W^{2,2}(\bbR; \cH),  
\quad n \in \bbN, \;  j =1,2,  \no 
\end{align}
with
\begin{equation}
\bsB_n = \chi_n(\bsA_-) \bsB \chi_n(\bsA_-), \quad 
\bsB_n^{\prime} = \chi_n(\bsA_-) \bsB^{\prime} \chi_n(\bsA_-), \quad n \in \bbN. 
\end{equation} 

\begin{lemma} \lb{l3.5}
Assume Hypothesis \ref{h3.4} and let $z \in \bbC \backslash [0,\infty)$. Then the following assertions hold: \\
$(i)$ The operators $\bsH_{j,n}$ converge to $\bsH_j$, $j=1,2$, 
in the strong resolvent sense, 
\begin{equation}
\slim_{n \to \infty} (\bsH_{j,n}-z\, \bsI)^{-1} 
= ( \bsH_{j}-z\, \bsI)^{-1}, \quad j=1,2.     \lb{limR}
\end{equation}
$(ii)$ The operators 
\begin{align}
& (\bsH_{0}- z \, \bsI) ( \bsH_{j,n}- z \, \bsI)^{-1}, \quad n \in \bbN, \;j=1,2,   \lb{closure} \\
& \ol{( \bsH_{j,n}-z \, \bsI)^{-1}(\bsH_{0}-z \, \bsI)} = 
\big[(\bsH_{0}- {\ol z} \, \bsI) ( \bsH_{j,n}- {\ol z} \, \bsI)^{-1}\big]^*, \quad 
n \in \bbN, \; j=1,2,   \no 
\end{align} 
are uniformly bounded with respect to $n \in \bbN$, that is, there exists 
$C \in (0,\infty)$ such that
\begin{equation}
\big\|(\bsH_{0}- z \, \bsI) ( \bsH_{j,n}- z \, \bsI)^{-1}\big\|_{\cB(L^2(\bbR; \cH))} 
\leq C, \quad n \in \bbN, \; j=1,2.    \lb{bd} 
\end{equation}
In addition, 
\begin{align} 
& \slim_{n \to \infty} \ol{(\bsH_{j,n}-z \, \bsI)^{-1}(\bsH_{0} - z \, \bsI)} 
= \ol{( \bsH_{j}-z \, \bsI)^{-1}(\bsH_{0}-z \, \bsI)}, \quad j=1,2,   \lb{conv1} \\
& \slim_{n \to \infty} (\bsH_{0}-z \, \bsI) (\bsH_{j,n}-z\, \bsI)^{-1} 
= (\bsH_{0}-z \, \bsI)( \bsH_{j}-z\, \bsI)^{-1}, \quad j=1,2.    \lb{conv2}
\end{align}  
\end{lemma}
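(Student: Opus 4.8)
The plan is to work with the common factorization of $\bsH_{j,n}$ and $\bsH_j$ over the free operator $\bsH_0$, exploiting that the cut-off $\chi_n(A_-)$ commutes with $A_-$, so that $\chi_n(\bsA_-)$ commutes with $\bsA_-$ and (being a function of $\bsA_-$ alone) with $\bsH_0 = -d^2/dt^2 + \bsA_-^2$. Writing $\bsH_{j,n} = \bsH_0 + \bsV_{j,n}$ with $\bsV_{j,n} = \bsB_n\bsA_- + \bsA_-\bsB_n + \bsB_n^2 + (-1)^j\bsB_n'$ (cf.\ the decomposition preceding the lemma, parallel to \eqref{3.27}), this commutation produces sandwich formulas such as $\bsB_n\bsA_-(\bsH_0-z\bsI)^{-1} = \chi_n(\bsA_-)\,[\bsB\bsA_-(\bsH_0-z\bsI)^{-1}]\,\chi_n(\bsA_-)$ and $\bsA_-\bsB_n(\bsH_0-z\bsI)^{-1} = \chi_n(\bsA_-)\,[\bsA_-\bsB(\bsH_0-z\bsI)^{-1}]\,\chi_n(\bsA_-)$, with the two remaining terms handled more easily since their norms tend to $0$ as $|\Im z|\to\infty$. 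Because $\|\chi_n(\bsA_-)\|_{\cB(L^2(\bbR;\cH))}\le 1$, the $\bsH_0$-infinitesimal boundedness of $\bsB\bsA_-$ recorded in Remark \ref{r3.5}, the bound \eqref{A-BH-0}, and Hypothesis \ref{h3.4}\,$(iv)$ together yield a constant $c<1$ and $\lambda_0>0$ with $\sup_{n}\|\bsV_{j,n}(\bsH_0-i\lambda\bsI)^{-1}\|_{\cB(L^2(\bbR;\cH))}\le c$ for all $\lambda\ge\lambda_0$ and $j=1,2$; in particular the $\bsV_{j,n}$ are $\bsH_0$-bounded with bound strictly less than one, uniformly in $n$. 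Hence, setting $\bsK_{j,n}(z):=\bsV_{j,n}(\bsH_0-z\bsI)^{-1}$, the factorization $\bsH_{j,n}-z\bsI = (\bsI+\bsK_{j,n}(z))(\bsH_0-z\bsI)$ on $\dom(\bsH_0)$ gives, for $\lambda\ge\lambda_0$, the identities $(\bsH_{j,n}-i\lambda\bsI)^{-1} = (\bsH_0-i\lambda\bsI)^{-1}(\bsI+\bsK_{j,n}(i\lambda))^{-1}$ and $(\bsH_0-i\lambda\bsI)(\bsH_{j,n}-i\lambda\bsI)^{-1}=(\bsI+\bsK_{j,n}(i\lambda))^{-1}$, all inverses having norm $\le(1-c)^{-1}$ uniformly in $n$.

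For part $(i)$, first observe that \eqref{slim} and a fiberwise application of the spectral theorem give $\slim_{n\to\infty}\chi_n(\bsA_-)=\bsI$ in $L^2(\bbR;\cH)$; inserting this and the uniform bounds into the sandwich formulas (in the spirit of Lemma \ref{l3.6}) yields $\slim_{n\to\infty}\bsK_{j,n}(i\lambda)=\bsK_j(i\lambda)$, where $\bsK_j(z)=(\bsB\bsA_-+\bsA_-\bsB+\bsB^2+(-1)^j\bsB')(\bsH_0-z\bsI)^{-1}$. The identity $(\bsI+\bsK_{j,n})^{-1}-(\bsI+\bsK_j)^{-1}=-(\bsI+\bsK_{j,n})^{-1}(\bsK_{j,n}-\bsK_j)(\bsI+\bsK_j)^{-1}$, together with the uniform bound $(1-c)^{-1}$, then gives $\slim_{n\to\infty}(\bsH_{j,n}-i\lambda\bsI)^{-1}=(\bsH_j-i\lambda\bsI)^{-1}$ for $\lambda\ge\lambda_0$. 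Since $\bsH_{j,n},\bsH_j\ge0$ one has $\|(\bsH_{j,n}-z\bsI)^{-1}\|\le\dist(z,[0,\infty))^{-1}$ uniformly in $n$, so strong resolvent convergence propagates from $z=i\lambda_0$ to all $z\in\bbC\setminus[0,\infty)$ by the usual Neumann expansion in $z$ about $i\lambda_0$ followed by a covering argument, giving \eqref{limR}.

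For part $(ii)$, the uniform bound \eqref{bd} at $z=i\lambda_0$ is $\|(\bsI+\bsK_{j,n}(i\lambda_0))^{-1}\|\le(1-c)^{-1}$; for general $z$ it follows by expressing $(\bsH_0-z\bsI)(\bsH_{j,n}-z\bsI)^{-1}$ through $(\bsH_0-i\lambda_0\bsI)(\bsH_{j,n}-i\lambda_0\bsI)^{-1}$ and $(\bsH_{j,n}-z\bsI)^{-1}$ via the resolvent identity and using the uniform resolvent bound above; the adjoint identity in \eqref{closure} is a direct computation from self-adjointness of $\bsH_0$ and $\bsH_{j,n}$, and transfers this bound to the closures. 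For \eqref{conv2} one takes $\slim_{n\to\infty}$ in $(\bsH_0-i\lambda_0\bsI)(\bsH_{j,n}-i\lambda_0\bsI)^{-1}=(\bsI+\bsK_{j,n}(i\lambda_0))^{-1}$ and then propagates in $z$ with the same resolvent identity and part $(i)$. For \eqref{conv1} --- the delicate point, since strong convergence does not pass to adjoints and one therefore cannot simply dualize \eqref{conv2} --- I would instead use the identity $(\bsH_{j,n}-z\bsI)^{-1}(\bsH_0-z\bsI)=\bsI-(\bsH_{j,n}-z\bsI)^{-1}\bsV_{j,n}$ on $\dom(\bsH_0)$, which for $|\Im z|$ large rearranges to $\ol{(\bsH_{j,n}-z\bsI)^{-1}(\bsH_0-z\bsI)}=(\bsI+(\bsH_0-z\bsI)^{-1}\bsV_{j,n})^{-1}$; each summand of $(\bsH_0-z\bsI)^{-1}\bsV_{j,n}$ again has the $\chi_n(\bsA_-)[\,\cdot\,]\chi_n(\bsA_-)$ form with an $n$-independent bounded core (for instance $(\bsH_0-z\bsI)^{-1}\bsB_n\bsA_- = \chi_n(\bsA_-)\,\ol{(\bsH_0-z\bsI)^{-1}\bsB\bsA_-}\,\chi_n(\bsA_-)$, where $\ol{(\bsH_0-z\bsI)^{-1}\bsB\bsA_-} = [\,\bsA_-\bsB(\bsH_0-\ol z\bsI)^{-1}]^*\in\cB(L^2(\bbR;\cH))$), so $\slim_{n\to\infty}(\bsH_0-z\bsI)^{-1}\bsV_{j,n}=(\bsH_0-z\bsI)^{-1}\bsV_j$ and \eqref{conv1} follows for $|\Im z|$ large; the extension to all $z\in\bbC\setminus[0,\infty)$ uses the identity $\ol{(\bsH_{j,n}-z\bsI)^{-1}(\bsH_0-z\bsI)}=[\bsI+(z-i\lambda_0)(\bsH_{j,n}-z\bsI)^{-1}]\,\ol{(\bsH_{j,n}-i\lambda_0\bsI)^{-1}(\bsH_0-i\lambda_0\bsI)}+(i\lambda_0-z)(\bsH_{j,n}-z\bsI)^{-1}$ and part $(i)$.

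I expect the main obstacle to lie in the first step, namely producing the bound $\sup_n\|\bsV_{j,n}(\bsH_0-i\lambda\bsI)^{-1}\|\le c<1$: the only perturbation term controlled directly is $\bsA_-\bsB$, and only through the non-obvious estimate of Remark \ref{r3.5}, so the argument turns entirely on the observation that $\chi_n(\bsA_-)$ commutes with both $\bsA_-$ and $\bsH_0$ and can hence be extracted to either side of every perturbation term, reducing each uniform-in-$n$ bound to the corresponding $n$-free bound already available. A secondary, more technical difficulty is \eqref{conv1}, which cannot be obtained by dualizing \eqref{conv2} and requires re-running the sandwich argument for $(\bsH_0-z\bsI)^{-1}\bsV_{j,n}$.
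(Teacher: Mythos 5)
Your proposal is correct, and its engine is the same as the paper's: the commutativity of $\chi_n(\bsA_-)$ with $\bsA_-$ and $(\bsH_0-z\,\bsI)^{-1}$ (cf.\ \eqref{comm}), which lets you extract $\chi_n(\bsA_-)$ to both sides of every perturbation term and thereby reduce each uniform-in-$n$ estimate to the $n$-free bounds \eqref{BA-} and \eqref{A-BH-0}; your factorization $(\bsH_0-z\,\bsI)(\bsH_{j,n}-z\,\bsI)^{-1}=(\bsI+\bsK_{j,n}(z))^{-1}$ is exactly the paper's identity \eqref{3.90}, and your treatment of \eqref{bd} and \eqref{conv2} matches the paper's. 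You deviate in two places. For part $(i)$ the paper does not use the Neumann factorization at all: it checks directly, using $\slim_{n\to\infty}\chi_n(\bsA_-)=\bsI$, that $\bsH_{j,n}f\to\bsH_jf$ for $f$ in the common core $\dom(\bsH_0)$ and then invokes \cite[Theorem~VIII.25]{RS80}; this is more elementary (no smallness of $\bsK_{j,n}$ needed, no propagation in $z$), whereas your route makes $(i)$ a corollary of the machinery you build anyway for $(ii)$. For \eqref{conv1} you correctly note that one cannot dualize \eqref{conv2}, but your remedy (re-running the sandwich argument on $(\bsH_0-z\,\bsI)^{-1}\bsV_{j,n}$ and inverting $\bsI+(\bsH_0-z\,\bsI)^{-1}\bsV_{j,n}$) is heavier than necessary: the paper simply observes that for fixed $f\in\dom(\bsH_0)$ one has $(\bsH_{j,n}-z\,\bsI)^{-1}(\bsH_0-z\,\bsI)f=(\bsH_{j,n}-z\,\bsI)^{-1}g$ with $g=(\bsH_0-z\,\bsI)f$ a fixed vector, so \eqref{limR} gives pointwise convergence on the dense set $\dom(\bsH_0)$, and the uniform bound \eqref{bd} together with \eqref{closure} upgrades this to strong convergence of the closures. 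Both of your detours are sound (your inversion of $\bsI+(\bsH_0-z\,\bsI)^{-1}\bsV_{j,n}$ is justified since its closure equals $\big[\bsI+\bsV_{j,n}(\bsH_0-\ol z\,\bsI)^{-1}\big]^*$ and hence inherits the norm bound $c<1$), so the only cost is extra work, not a gap.
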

\begin{proof} Since the proof for the operators $\bsH_{2,n},\bsH_2$ is a verbatim repetition of the proof for $\bsH_{1,n},\bsH_1$, we exclusively focus on the latter. 

\noindent 
$(i)$ By \eqref{3.27} and the analogous equation for $\bsH_{1,n}$, 
the operators $\bsH_1$ and $\bsH_{1,n}$ have a common core 
$\mathrm{dom}(\bsH_1)$. Since 
\begin{equation} 
\bsB^\prime-\bsB^\prime_n=\bsB^\prime-\chi_n(\bsA_-) \bsB^\prime \chi_n(\bsA_-)=(\bsI-\chi_n(\bsA_-))\bsB^\prime+\chi_n(\bsA_-)\bsB^\prime(\bsI-\chi_n(\bsA_-)), 
\end{equation} 
and $\bsB^\prime$ is a bounded operator, the convergence 
\begin{equation}
\slim_{n \to \infty} \bsB^\prime_n = \bsB^\prime    \lb{limBprime}
\end{equation}
holds, employing
\begin{equation} 
\slim_{n \to \infty} \chi_n(\bsA_-) = \bsI    \lb{2.75} 
\end{equation}
(applying the spectral theorem, see also \eqref{slim}). 
Arguing analogously, one also obtains that 
\begin{equation} 
\slim_{n \to \infty} \bsB_n = \bsB.     \lb{limBC}
\end{equation} 
Next, rewriting
\begin{align}
& \bsB^2-\bsB^2_n=\bsB^2-\chi_n(\bsA_-)\bsB\chi_n(\bsA_-)\bsB\chi_n(\bsA_-) 
 \\
& \quad =(\bsI-\chi_n(\bsA_-))\bsB^2+\chi_n(\bsA_-)\bsB\big(\bsB(\bsI-\chi_n(\bsA_-))+(\bsI-\chi_n(\bsA_-))\bsB\chi_n(\bsA_-)\big),   \no 
\end{align}
one also obtains 
\begin{equation} 
\slim_{n \to \infty} \bsB^2_n = \bsB^2.    \lb{limB-2}
\end{equation} 
Thus, it remains to show that for all $f\in\mathrm{dom}(\bsH_1)$, 
$\slim_{n \to \infty} \bsB_n\bsA_- f = \bsB\bsA_-f$ 
and $\slim_{n \to \infty} \bsA_- \bsB_n f = \bsA_- \bsB f$. Indeed, one verifies  
\begin{align}
\bsB\bsA_--\bsB_n\bsA_- &= \bsB \bsA_-  -\chi_n(\bsA_-)\bsB \chi_n(\bsA_-) \bsA_- \\
&=(\bsI-\chi_n(\bsA_-))\bsB\bsA_-+\chi_n(\bsA_-)\bsB(\bsI-\chi_n(\bsA_-))\bsA_-,  
\no 
\end{align} 
and 
\begin{align}
\bsA_- \bsB - \bsA_-  \bsB_n &=  \bsA_- \bsB - \bsA_- \chi_n(\bsA_-)\bsB \chi_n(\bsA_-)\\
&=(\bsI-\chi_n(\bsA_-)) \bsA_- \bsB + \bsA_- \chi_n(\bsA_-)\bsB(\bsI-\chi_n(\bsA_-)),  
\no 
\end{align} 
implying the required convergence. Consequently, 
\begin{equation} 
\slim_{n \to \infty} \bsH_{1,n}f = \bsH_1f, \quad f\in \mathrm{dom}(\bsH_1).  
\end{equation} 
Since $\bsH_{1,n}$ and $\bsH_1$ are self-adjoint operators with a common core, \cite[Theorem~VIII.25]{RS80} (see also \cite[Theorem~9.16]{We80}) implies 
that $\bsH_{1,n}$ converges to $\bsH_1$ in the strong resolvent sense.

\smallskip 
\noindent 
$(ii)$ Fix $z \in \bbC \backslash [0,\infty)$. First, one observes that 
\begin{equation}
\ol{( \bsH_{1,n}-z \, \bsI)^{-1}(\bsH_{0}-z \, \bsI)} = 
\big[(\bsH_{0}- {\ol z} \, \bsI) ( \bsH_{1,n}- {\ol z} \, \bsI)^{-1}\big]^*. 
\end{equation}
Using the standard resolvent identity one obtains
\begin{equation}\label{ssss}
({\bsH}_{1,n}-z \, \bsI)^{-1}-(\bsH_0-z \, \bsI)^{-1}= 
-({\bsH}_{1,n}-z \, \bsI)^{-1}\big[(\bsH_1-\bsH_0)(\bsH_0-z \, \bsI)^{-1}\big], 
\end{equation}
and hence concludes,
\begin{align}
\begin{split} 
& (\bsH_0 - z \, \bsI)(\bsH_{1,n} - z \, \bsI)^{-1}   \\
& \quad = \big[\bsI + \big(\bsA_- \bsB_n + \bsB_n \bsA_- + \bsB_n^2 - \bsB_n'\big)
(\bsH_0 - z \, \bsI)^{-1}\big]^{-1}.     \lb{3.90} 
\end{split}
\end{align} 
Because of \eqref{limBprime} and \eqref{limB-2}, it suffices to focus on the terms 
$\bsB_n \bsA_-$ and $\bsA_- \bsB_n$. As in \eqref{BA-} one estimates 
\begin{align}
\begin{split} 
& \big\|\bsB_n \bsA_- (\bsH_0 - z \, \bsI)^{-1}\big\|_{\cB(L^2(\bbR; \cH))} \leq |\Im(z)|^{-1/2}
\big\|\bsB\big\|_{\cB(L^2(\bbR; \cH))}      \lb{BnA-} \\
& \quad \times \big\|\bsA_- (\bsH_0 - z \, \bsI)^{-1/2}\big\|_{\cB(L^2(\bbR; \cH))}, 
\quad n \in \bbN,   
\end{split} 
\end{align} 
employing $\bsB_n = \chi_n(\bsA_-) \bsB \chi_n(\bsA_-)$,  
$\|\chi_n(\bsA_-)\|_{\cB(L^2(\bbR; \cH))} \leq 1$, $n \in \bbN$, and commutativity of 
$\chi_n(\bsA_-)$ and $(\bsH_0 -z \, \bsI)^{-1}$, that is, 
\begin{equation}
\big[\chi_n(\bsA_-), (\bsH_0 -z \, \bsI)^{-1}\big] = 0, \quad n \in \bbN.    \lb{comm}
\end{equation}
Similarly, utilizing the estimate \eqref{A-BH-0}, one concludes 
\begin{align}
\begin{split} 
& \big\|\bsA_- \bsB_n (\bsH_0 - z \, \bsI)^{-1}\big\|_{\cB(L^2(\bbR; \cH))} \leq 
\big\|\bsA_- \bsB (\bsH_0 - z \, \bsI)^{-1}\big\|_{\cB(L^2(\bbR; \cH))}    \\
& \quad \leq a < 1, \quad n \in \bbN,    \lb{A-B-nH-0}
\end{split} 
\end{align} 
for $0 < |\Im(z)|$ sufficiently large. Thus, choosing $0 < |\Im(z)|$ sufficiently large, the 
operator in \eqref{3.90} is uniformly bounded in norm, proving \eqref{bd}. In fact, 
using $\bsB_n = \chi_n(\bsA_-) \bsB \chi_n(\bsA_-)$ again, and repeatedly employing 
commutativity of $\chi_n(\bsA_-)$ and $(\bsH_0 -z \, \bsI)^{-1}$, \eqref{3.90} also proves 
the strong convergence of $ \big(\bsA_- \bsB_n + \bsB_n \bsA_- + \bsB_n^2 - \bsB_n'\big)
(\bsH_0 - z \, \bsI)^{-1}$ to $ \big(\bsA_- \bsB + \bsB \bsA_- + \bsB^2 - \bsB'\big)
(\bsH_0 - z \, \bsI)^{-1}$ as $n \to \infty$ for $0 < |\Im(z)|$ sufficiently large. Using 
the fact that strong convergence of a sequence of uniformly bounded operators is 
equivalent to strong resolvent convergence of  the sequence, one obtains the asserted 
strong convergence in \eqref{conv2} for $0 < |\Im(z)|$ sufficiently large. An application of \eqref{limR} together with the bound \eqref{bd} permits one to extend this to all $z \in \bbC \backslash [0,\infty)$, completing the proof of \eqref{conv2}. 

Finally, to prove \eqref{conv1} it suffices to combine the strong resolvent convergence in \eqref{limR}, the uniform boundedness in \eqref{bd} with equality \eqref{closure}, the strong convergence   
\begin{equation} 
\slim_{n \to \infty} (\bsH_{1,n}-z \, \bsI)^{-1}(\bsH_{0} - z \, \bsI) f 
= ( \bsH_1 - z \, \bsI)^{-1}(\bsH_{0}-z \, \bsI)f, \quad f \in \dom(\bsH_0), 
\end{equation} 
and the fact that $\dom(\bsH_0)$ is dense in $L^2(\bbR; \cH)$. Here we 
used that  uniformly bounded sequences of bounded operators in a Hilbert space converge strongly if they converge pointwise on a dense subset of the Hilbert 
space. 
\end{proof}

Next, we recall that 
\begin{align}
& \bsH_2 - \bsH_1 = 2 \bsB^{\prime},      \lb{2.64} \\
&  \bsH_{2,n} - \bsH_{1,n} = 2 \bsB_n^{\prime} = 2 \chi_n(\bsA_-) \bsB^{\prime} \chi_n(\bsA_-), 
\quad n \in \bbN,    \lb{2.65} 
\end{align}
and in analogy to \eqref{3.31}--\eqref{3.32} one concludes that 
\begin{equation}
\big[(\bsH_{2,n}-z \, \bsI)^{-1} - (\bsH_{1,n}-z \, \bsI)^{-1}\big] \in \cB_1\big(L^2(\bbR; \cH)\big), 
\quad n \in \bbN, \; z \in \bbC \backslash [0,\infty),    \lb{2.65a}
\end{equation}
since 
\begin{align}
& (\bsH_{2,n}-z \, \bsI)^{-1} - (\bsH_{1,n}-z \, \bsI)^{-1} 
= - \big[\ol{(\bsH_{1,n}-z \, \bsI)^{-1}(\bsH_0 - z \, \bsI)}\big] \chi_n(\bsA_-)   \no \\
& \quad \times \big[(\bsH_0 -z \, \bsI)^{-1} 2 \bsB^{\prime} (\bsH_0 -z \, \bsI)^{-1} \big] 
\chi_n(\bsA_-)  \big[(\bsH_0 -z \, \bsI)(\bsH_{2,n} - z \, \bsI)^{-1}\big],     \lb{2.65b} \\
& \hspace*{8.2cm} n \in \bbN, \; z \in \bbC \backslash [0,\infty),    \no  
\end{align}
again employing commutativity of $\chi_n(\bsA_-)$ and $(\bsH_0 -z \, \bsI)^{-1}$ 
(cf.\ \eqref{comm}). 

Finally, we proceed to some crucial convergence results to be used in Section \ref{s5}. 

\begin{theorem} \lb{t3.7}
Assume Hypothesis \ref{h3.4}. Then 
\begin{align}
\begin{split} 
& \lim_{n\to\infty} \big\|\big[(\bsH_{2,n} - z \, \bsI)^{-1} - (\bsH_{1,n} - z \, \bsI)^{-1}\big]  \\
& \hspace*{1cm} - [(\bsH_2 - z \, \bsI)^{-1} - (\bsH_1 - z \, \bsI)^{-1}\big]
\big\|_{\cB_1(L^2(\bbR; \cH))} = 0, \quad z \in \bbC \backslash \bbR.    \lb{2.66}
\end{split} 
\end{align}
\end{theorem}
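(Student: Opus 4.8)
The plan is to use the explicit factorizations \eqref{2.65b} for the approximants and \eqref{3.31} for the limiting operators, together with the convergence results already assembled in Lemma \ref{l3.5} and Lemma \ref{l3.6}. First I would rewrite the limiting difference $(\bsH_2 - z\bsI)^{-1} - (\bsH_1 - z\bsI)^{-1}$ in exactly the same ``sandwich'' form as \eqref{2.65b}, namely
\begin{align}
\begin{split}
& (\bsH_2 - z\bsI)^{-1} - (\bsH_1 - z\bsI)^{-1}
= - \big[\ol{(\bsH_1 - z\bsI)^{-1}(\bsH_0 - z\bsI)}\big] \\
& \quad \times \big[(\bsH_0 - z\bsI)^{-1}\, 2\bsB' \,(\bsH_0 - z\bsI)^{-1}\big]
\big[(\bsH_0 - z\bsI)(\bsH_2 - z\bsI)^{-1}\big],
\end{split}
\end{align}
using the intertwining identities \eqref{3.31a} (which hold for $z \in \bbC\backslash[0,\infty)$, and by a separate resolvent-identity argument or analytic continuation also for the remaining $z \in \bbC\backslash\bbR$). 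The middle factor is the \emph{same} fixed trace class operator in both expressions (up to the insertion of $\chi_n(\bsA_-)$ in the approximant case), since by Hypothesis \ref{h3.4}$\,(vi)$ and the factorization $2\bsB' = 2\,\sgn(\bsB')|\bsB'|^{1/2}\,|\bsB'|^{1/2}$ one has $(\bsH_0 - z\bsI)^{-1}\,2\bsB'\,(\bsH_0 - z\bsI)^{-1} \in \cB_1(L^2(\bbR;\cH))$.

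The second step is to set this up precisely as an application of Lemma \ref{l3.6} with $p = 1$. Write the approximant difference as $R_n S_n T_n^*$ with
$R_n = -\ol{(\bsH_{1,n} - z\bsI)^{-1}(\bsH_0 - z\bsI)}$,
$T_n^* = (\bsH_0 - z\bsI)(\bsH_{2,n} - z\bsI)^{-1}$, and
$S_n = \chi_n(\bsA_-)\big[(\bsH_0 - z\bsI)^{-1} 2\bsB'(\bsH_0 - z\bsI)^{-1}\big]\chi_n(\bsA_-)$; and write the limiting difference as $R S T^*$ with $R$, $T^*$ the corresponding operators without the subscript $n$ and $S = (\bsH_0 - z\bsI)^{-1} 2\bsB'(\bsH_0 - z\bsI)^{-1}$. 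The strong convergence $R_n \to R$ and $T_n^* \to T$ is exactly \eqref{conv1} and \eqref{conv2} of Lemma \ref{l3.5}$\,(ii)$, and uniform boundedness of $R_n, T_n^*$ is \eqref{bd}; strong convergence of $R$ and $T^*$ themselves is trivially needed only for the limit object, which is fixed. The trace-norm convergence $\|S_n - S\|_{\cB_1} \to 0$ follows from $\slim_{n\to\infty}\chi_n(\bsA_-) = \bsI$ (see \eqref{2.75}) applied together with Lemma \ref{l3.6} again — now with the roles of the strongly convergent factors played by $\chi_n(\bsA_-)$ and the fixed trace class operator in the middle. Then Lemma \ref{l3.6} delivers \eqref{2.66} for $z$ with $|\Im(z)|$ sufficiently large.

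The final step is to remove the restriction to large $|\Im(z)|$ and obtain the claim for all $z \in \bbC\backslash\bbR$. Here I would use the first resolvent identity to express, for any two points $z, z_0$ in the same half-plane, the difference of resolvent differences at $z$ in terms of the difference of resolvent differences at $z_0$ multiplied by bounded factors that converge strongly and are uniformly bounded in $n$; invoking Lemma \ref{l3.6} once more (with the $\cB_1$-convergence at $z_0$ as input) propagates the trace-norm convergence to $z$. Alternatively one can note that both sides of \eqref{2.66} are $\cB_1$-valued analytic functions of $z$ on $\bbC\backslash\bbR$ with locally uniform bounds in $n$, so convergence on a set with an accumulation point extends by a vector-valued Vitali/Montel argument.

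I expect the main obstacle to be the bookkeeping in the second step: one must be careful that the ``middle'' trace class factor carries the $\chi_n(\bsA_-)$'s in a way compatible with Lemma \ref{l3.6}, i.e.\ that $\chi_n(\bsA_-) S \chi_n(\bsA_-) \to S$ in $\cB_1$ rather than merely strongly, and that the commutativity \eqref{comm} of $\chi_n(\bsA_-)$ with $(\bsH_0 - z\bsI)^{-1}$ is used so that $S_n$ really does factor through the fixed operator $|\bsB'|^{1/2}(\bsH_0 - z\bsI)^{-1} \in \cB_2$. Once the algebra is arranged so that every non-trace-class factor is uniformly bounded and strongly convergent while exactly one trace class factor is varied and shown to converge in $\cB_1$, the result is immediate from Lemma \ref{l3.6}; the subtlety is purely in choosing that arrangement correctly and in handling the two different ranges of $z$ ($\bbC\backslash[0,\infty)$ for the intertwining identities versus $\bbC\backslash\bbR$ for the statement).
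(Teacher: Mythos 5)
Your proposal is correct and follows essentially the same route as the paper: both differences are written in the sandwich form of \eqref{2.65b}, the middle factor is shown to converge in $\cB_1$ via Lemma \ref{l3.6} applied to $\chi_n(\bsA_-)$ and the fixed trace class operator $(\bsH_0-z\,\bsI)^{-1}\bsB'(\bsH_0-z\,\bsI)^{-1}$, and a second application of Lemma \ref{l3.6} with \eqref{conv1}, \eqref{conv2}, and \eqref{bd} finishes the argument. Your final step is unnecessary: since $\bbC\backslash\bbR\subset\bbC\backslash[0,\infty)$ and Lemma \ref{l3.5}\,$(ii)$ already holds for all $z\in\bbC\backslash[0,\infty)$, no further extension in $z$ is required.
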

\begin{proof}
Equations \eqref{3.31} and \eqref{2.65} yield  
\begin{align}
& \big[(\bsH_{2,n} - z \, \bsI)^{-1} - (\bsH_{1,n} - z \, \bsI)^{-1}\big]
 - [(\bsH_2 - z \, \bsI)^{-1} - (\bsH_1 - z \, \bsI)^{-1}\big]    \no \\
 & \quad = - 2 (\bsH_{2,n} - z \, \bsI)^{-1} \bsB_n^{\prime} (\bsH_{1,n} - z \, \bsI)^{-1} 
 + 2 (\bsH_2 - z \, \bsI)^{-1} \bsB^{\prime} (\bsH_1 - z \, \bsI)^{-1}   \no \\
 & \quad = - 2\ol{\big[(\bsH_{2,n} - z \, \bsI)^{-1} (\bsH_0 - z \, \bsI)\big]}    \no \\
& \qquad \quad \times  \big\{\chi_n(\bsA_-) (\bsH_0 - z \, \bsI)^{-1} 
\bsB^{\prime} (\bsH_0 - z \, \bsI)^{-1} 
 \chi_n(\bsA_-)\big\}    \no \\
& \qquad \quad \times \big[(\bsH_0 - z \, \bsI) (\bsH_{1,n} - z \, \bsI)^{-1}\big],     \\
& \qquad + 2\ol{\big[(\bsH_2 - z \, \bsI)^{-1} (\bsH_0 - z \, \bsI)\big]}    \no \\
& \qquad \quad \times  \big\{(\bsH_0 - z \, \bsI)^{-1} \bsB^{\prime} 
(\bsH_0 - z \, \bsI)^{-1}\big\}    \no \\
& \qquad \quad \times \big[(\bsH_0 - z \, \bsI) (\bsH_1 - z \, \bsI)^{-1}\big], \quad 
z \in \bbC \backslash [0,\infty).  
\end{align}
By Lemma \ref{l3.6} and \eqref{3.25a}, the term $\big\{\chi_n(\bsA_-) (\bsH_0 - z \, \bsI)^{-1} 
\bsB^{\prime} (\bsH_0 - z \, \bsI)^{-1} 
 \chi_n(\bsA_-)\big\}$ converges to $\big\{(\bsH_0 - z \, \bsI)^{-1} \bsB^{\prime} 
(\bsH_0 - z \, \bsI)^{-1}\big\}$ in $\cB_1\big(L^2(\bbR; \cH)\big)$-norm as $n\to \infty$.
Another application of Lemma \ref{l3.6} proves \eqref{2.66} since by Lemma \ref{l3.5}\,$(ii)$, for $z \in \bbC \backslash [0,\infty)$, one has  
\begin{align}
& \slim_{n\to\infty} \big[(\bsH_0 - z \, \bsI) (\bsH_{1,n} - z \, \bsI)^{-1}\big] = 
\big[(\bsH_0 - z \, \bsI) (\bsH_1 - z \, \bsI)^{-1}\big],    \lb{2.69} \\
& \slim_{n\to\infty} \ol{\big[(\bsH_{2,n} - z \, \bsI)^{-1} (\bsH_0 - z \, \bsI)\big]} =  
\ol{\big[(\bsH_2 - z \, \bsI)^{-1} (\bsH_0 - z \, \bsI)\big]}.     \lb{2.70}
\end{align} 
\end{proof}

\begin{theorem} \lb{t3.8}
Assume Hypothesis \ref{h3.4} and let $z, z' \in \bbC \backslash [0,\infty)$. Then 
\begin{align}
& \lim_{n\to\infty} \big\|\bsB_n^{\prime} (\bsH_{j,n} - z \, \bsI)^{-1} 
- \bsB^{\prime} (\bsH_j - z \, \bsI)^{-1}
\big\|_{\cB_2(L^2(\bbR; \cH))} = 0, \quad j = 1,2,   \lb{2.71} \\
& \lim_{n\to\infty} \big\|(\bsH_{j,n} - z \, \bsI)^{-1} 2 \bsB_n^{\prime} (\bsH_{j,n} - z' \, \bsI)^{-1} 
\no \\
& \hspace*{1cm} - (\bsH_j - z \, \bsI)^{-1} 2 \bsB^{\prime} (\bsH_j - z' \, \bsI)^{-1}
\big\|_{\cB_1(L^2(\bbR; \cH))} = 0,  \quad j = 1,2.    \lb{2.72}
\end{align}
\end{theorem}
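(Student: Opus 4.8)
The plan is to anchor every Schatten-norm estimate at the free comparison operator $\bsH_0$, using three ingredients: the commutativity $\big[\chi_n(\bsA_-),(\bsH_0 - z\,\bsI)^{-1}\big] = 0$ from \eqref{comm}; the Hilbert--Schmidt property $|\bsB^{\prime}|^{1/2}(\bsH_0 - z\,\bsI)^{-1} \in \cB_2\big(L^2(\bbR;\cH)\big)$ from Hypothesis \ref{h3.4}\,$(vi)$; and Lemma \ref{l3.5}\,$(ii)$, which supplies the uniform $n$-bound \eqref{bd} and the strong limits \eqref{conv1}, \eqref{conv2} of the ``sandwich'' operators $(\bsH_0 - z\,\bsI)(\bsH_{j,n} - z\,\bsI)^{-1}$ and $\ol{(\bsH_{j,n} - z\,\bsI)^{-1}(\bsH_0 - z\,\bsI)}$. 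Everywhere I would exploit the self-adjoint factorization $\bsB^{\prime} = \sgn(\bsB^{\prime})|\bsB^{\prime}| = \big(\sgn(\bsB^{\prime})|\bsB^{\prime}|^{1/2}\big)|\bsB^{\prime}|^{1/2}$, hence $\bsB_n^{\prime} = \chi_n(\bsA_-)\bsB^{\prime}\chi_n(\bsA_-)$, to split $\bsB_n^{\prime}$ into a bounded factor times a factor producing a $\cB_2$ operator, and then apply Lemma \ref{l3.6}.

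First I would prove \eqref{2.71} by writing $\bsB_n^{\prime}(\bsH_{j,n} - z\,\bsI)^{-1} = \big[\bsB_n^{\prime}(\bsH_0 - z\,\bsI)^{-1}\big]\big[(\bsH_0 - z\,\bsI)(\bsH_{j,n} - z\,\bsI)^{-1}\big]$. The second bracket is uniformly bounded in $n$ and its adjoint converges strongly by \eqref{conv1}, so by Lemma \ref{l3.6} it suffices to show the first bracket converges to $\bsB^{\prime}(\bsH_0 - z\,\bsI)^{-1}$ in $\cB_2$. Commuting the inner $\chi_n(\bsA_-)$ past $(\bsH_0 - z\,\bsI)^{-1}$ and inserting the factorization of $\bsB^{\prime}$, the first bracket equals $\chi_n(\bsA_-)\big[\sgn(\bsB^{\prime})|\bsB^{\prime}|^{1/2}\big]\big[|\bsB^{\prime}|^{1/2}(\bsH_0 - z\,\bsI)^{-1}\big]\chi_n(\bsA_-)$, in which the middle factor lies in $\cB_2$ by Hypothesis \ref{h3.4}\,$(vi)$ and the two outer $\chi_n(\bsA_-)$ converge strongly to $\bsI$ by \eqref{2.75}; Lemma \ref{l3.6} then yields $\cB_2$-convergence to $\sgn(\bsB^{\prime})|\bsB^{\prime}|^{1/2}|\bsB^{\prime}|^{1/2}(\bsH_0 - z\,\bsI)^{-1} = \bsB^{\prime}(\bsH_0 - z\,\bsI)^{-1}$. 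This proves \eqref{2.71}, and as a by-product that $\bsB^{\prime}(\bsH_j - z\,\bsI)^{-1} \in \cB_2$.

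For \eqref{2.72} I would use the symmetric split $\bsB_n^{\prime} = \big[\chi_n(\bsA_-)\sgn(\bsB^{\prime})|\bsB^{\prime}|^{1/2}\big]\big[|\bsB^{\prime}|^{1/2}\chi_n(\bsA_-)\big]$, so that $(\bsH_{j,n} - z\,\bsI)^{-1}\,2\bsB_n^{\prime}\,(\bsH_{j,n} - z'\,\bsI)^{-1} = 2\,L_n M_n$ with $L_n = (\bsH_{j,n} - z\,\bsI)^{-1}\chi_n(\bsA_-)\sgn(\bsB^{\prime})|\bsB^{\prime}|^{1/2}$ and $M_n = |\bsB^{\prime}|^{1/2}\chi_n(\bsA_-)(\bsH_{j,n} - z'\,\bsI)^{-1}$. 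Rewriting $M_n = |\bsB^{\prime}|^{1/2}(\bsH_0 - z'\,\bsI)^{-1}\chi_n(\bsA_-)\big[(\bsH_0 - z'\,\bsI)(\bsH_{j,n} - z'\,\bsI)^{-1}\big]$ and arguing as in the previous paragraph (the strong limit needed from Lemma \ref{l3.6} being the adjoint of the last bracket, i.e.\ \eqref{conv1}) shows $M_n \to |\bsB^{\prime}|^{1/2}(\bsH_j - z'\,\bsI)^{-1}$ in $\cB_2$; applying the same reasoning to $L_n^* = \sgn(\bsB^{\prime})|\bsB^{\prime}|^{1/2}\chi_n(\bsA_-)(\bsH_{j,n} - \ol{z}\,\bsI)^{-1}$ and passing to adjoints (a $\cB_2$-isometry) shows $L_n \to (\bsH_j - z\,\bsI)^{-1}|\bsB^{\prime}|^{1/2}\sgn(\bsB^{\prime})$ in $\cB_2$. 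Then from $\|L_n M_n - LM\|_{\cB_1} \le \|L_n - L\|_{\cB_2}\|M_n\|_{\cB_2} + \|L\|_{\cB_2}\|M_n - M\|_{\cB_2}$ and $2LM = 2(\bsH_j - z\,\bsI)^{-1}|\bsB^{\prime}|^{1/2}\sgn(\bsB^{\prime})|\bsB^{\prime}|^{1/2}(\bsH_j - z'\,\bsI)^{-1} = 2(\bsH_j - z\,\bsI)^{-1}\bsB^{\prime}(\bsH_j - z'\,\bsI)^{-1}$, relation \eqref{2.72} follows.

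The main obstacle, and the reason one cannot simply invoke the strong resolvent convergence $\bsH_{j,n}\to\bsH_j$, is that $\bsB_n^{\prime}$ commutes with $(\bsH_0 - z\,\bsI)^{-1}$ but not with $(\bsH_{j,n} - z\,\bsI)^{-1}$: every trace-ideal computation has to be routed through the sandwich $(\bsH_0 - z\,\bsI)(\bsH_{j,n} - z\,\bsI)^{-1}$, and its uniform boundedness together with the strong convergence of it and of its adjoint is exactly what Lemma \ref{l3.5}\,$(ii)$ was designed to deliver. The remaining care is purely bookkeeping: tracking which factors genuinely lie in $\cB_2$ versus being merely uniformly bounded, and the $z \leftrightarrow \ol{z}$ substitutions forced by the adjoint manipulations. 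Once the two factorizations above are in place, the limits follow mechanically from Lemmas \ref{l3.6} and \ref{l3.5}.
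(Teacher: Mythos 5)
Your proof is correct and follows essentially the same route as the paper: both arguments anchor every estimate at $\bsH_0$ via the commutativity \eqref{comm}, factor $\bsB^{\prime}$ through $|\bsB^{\prime}|^{1/2}$ using Hypothesis \ref{h3.4}\,$(vi)$, and invoke Lemma \ref{l3.6} together with the uniform bounds and strong limits of Lemma \ref{l3.5}\,$(ii)$. The only deviation is cosmetic: for \eqref{2.72} the paper keeps the trace-class middle factor $(\bsH_0 - z\,\bsI)^{-1}2\bsB^{\prime}(\bsH_0 - z'\,\bsI)^{-1}$ intact and applies Lemma \ref{l3.6} once in $\cB_1$, whereas you split the product into two Hilbert--Schmidt halves and conclude via $\|LM\|_{\cB_1}\leq\|L\|_{\cB_2}\|M\|_{\cB_2}$ -- an equivalent regrouping (and your explicit remark that the strong convergence needed is that of the \emph{adjoint} of the rightmost factor, i.e.\ \eqref{conv1}, is a correct and careful reading of Lemma \ref{l3.6}).
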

\begin{proof}
Fix $z, z' \in \bbC \backslash [0,\infty)$. To prove \eqref{2.71} one writes
\begin{align}
\begin{split} 
& \bsB_n^{\prime} (\bsH_{j,n} - z \, \bsI)^{-1} = \chi_n(\bsA_-) \big[\bsB^{\prime}  
 (\bsH_0 - z \, \bsI)^{-1}\big]    \\
& \quad \times \chi_n(\bsA_-) \big[(\bsH_0 - z \, \bsI) (\bsH_{j,n} - z \, \bsI)^{-1}\big], 
\quad j =1,2, \; n \in \bbN, 
\end{split} 
\end{align}
employing once again commutativity of $\chi_n(\bsA_-)$ and $(\bsH_0 - z \, \bsI)^{-1}$ 
(cf.\ \eqref{comm}). Since 
\begin{equation} 
\bsB^{\prime} (\bsH_0 - z \, \bsI)^{-1} 
= \big[U_{\bsB^{\prime}}  |\bsB^{\prime}|^{1/2}\big] |\bsB^{\prime}|^{1/2} (\bsH_0 - z \, \bsI)^{-1}  
\in \cB_2\big(L^2(\bbR; \cH)\big),   \lb{2.74}
\end{equation} 
by \eqref{3.25} and the polar decomposition 
$\bsB^{\prime} = U_{\bsB^{\prime}} |\bsB^{\prime}|$ of $\bsB^{\prime} \in \cB\big(L^2(\bbR; \cH)\big)$  
(cf.\ \eqref{B'bd}). Thus, \eqref{2.71} is a consequence of Lemma \ref{l3.6} combined with 
\eqref{2.75} and \eqref{conv2}.

Relation \eqref{2.72} follows along exactly the same lines upon decomposing 
\begin{align}
& (\bsH_{j,n} - z \, \bsI)^{-1} 2 \bsB_n^{\prime} (\bsH_{j,n} - z' \, \bsI)^{-1} = 
\big[\ol{(\bsH_{j,n} - z \, \bsI)^{-1} (\bsH_0 - z \, \bsI)}\big] \chi_n(\bsA_-)      \no \\
& \quad \times \big[(\bsH_0 - z \, \bsI)^{-1} 2 \bsB^{\prime} (\bsH_0 - z' \, \bsI)^{-1}\big] 
\chi_n(\bsA_-) \big[(\bsH_0 - z' \, \bsI) (\bsH_{j,n} - z' \, \bsI)^{-1}\big],     \no \\
& \hspace*{8.6cm}  j =1,2, \; n \in \bbN,     
\end{align} 
applying once more Lemma \ref{l3.6}, \eqref{conv1}, \eqref{conv2}, \eqref{2.75}, \eqref{comm},  
and 
\begin{align} 
\begin{split}  
& (\bsH_0 - z \, \bsI)^{-1} \bsB^{\prime} (\bsH_0 - z' \, \bsI)^{-1} 
= \big[(\bsH_0 - z \, \bsI)^{-1} |\bsB^{\prime}|^{1/2}\big] \sgn(\bsB^{\prime})    \\
& \quad \times \big[ |\bsB^{\prime}|^{1/2} (\bsH_0 - z' \, \bsI)^{-1}\big] 
\in \cB_1\big(L^2(\bbR; \cH)\big),     \lb{2.77}
\end{split} 
\end{align} 
since $|\bsB^{\prime}|^{1/2} (\bsH_0 - z \, \bsI)^{-1} \in \cB_2\big(L^2(\bbR; \cH)\big)$ 
by \eqref{3.25}. 
\end{proof}

\section{Computing $\xi(\, \cdot \, ; \bsH_2, \bsH_1)$  In Terms Of 
$\xi(\, \cdot \,; A_+, A_-)$} \lb{s5}

Given the results of Section \ref{s3} and Appendix \ref{sA}, we now determine 
$\xi(\, \cdot \, ; \bsH_2, \bsH_1)$ in terms of $\xi(\, \cdot \,; A_+, A_-)$. This represents 
one of the principal results of this paper.

\begin{theorem}\lb{t4.4}
Assume Hypothesis \ref{h3.4}. Then, 
\begin{align} 
\begin{split} 
& \int_{[0,\infty)} \xi(\lambda; \bsH_2, \bsH_1) \, d \lambda 
\, \big[(\lambda - z)^{-1} - (\lambda - z_0)^{-1}\big]     \\ 
& \quad = \int_{\bbR} \xi(\nu; A_+, A_-) \, d\nu \big[(\nu^2 - z)^{-1/2} - (\nu^2 - z_0)^{-1/2}\big], 
\quad z, z_0 \in \bbC \backslash [0,\infty).    \lb{5.1} 
\end{split} 
\end{align}
Moreover, 
\begin{equation}
\xi(\lambda; \bsH_2, \bsH_1) = \f{1}{\pi} \int_{- \lambda^{1/2}}^{\lambda^{1/2}} 
\f{\xi(\nu; A_+, A_-) \, d \nu}{(\lambda - \nu^2)^{1/2}} \, \text{ for a.e.~$\lambda > 0$.} 
\lb{5.2} 
\end{equation}
\end{theorem}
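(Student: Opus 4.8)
The plan is to pass to the limit $n\to\infty$ in the approximate trace formula \eqref{1} using the convergence results established in Section~\ref{s3}, and then perform a Stieltjes inversion to extract \eqref{5.2}. I will work throughout with $z\in\bbC\backslash[0,\infty)$ and use $z_0$ as a fixed reference point to make all integrals absolutely convergent (this is necessary because $\xi(\,\cdot\,;\bsH_2,\bsH_1)$ is only known to lie in $L^1(\bbR;(\lambda^2+1)^{-1}d\lambda)$, not $L^1(\bbR;d\lambda)$, and similarly for $\xi(\,\cdot\,;A_+,A_-)$ in the limit).

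First I would record the approximate identity. For each $n\in\bbN$ the perturbations $A_{+,n}-A_-$ and $(\bsH_{2,n}-z\,\bsI)^{-1}-(\bsH_{1,n}-z\,\bsI)^{-1}$ are trace class (see \eqref{B1An}, \eqref{2.65a}), so the results of \cite{CGPST14} (equivalently \cite{Pu08}) give the trace formula
\begin{equation*}
\int_{[0,\infty)} \f{\xi(\lambda; \bsH_{2,n}, \bsH_{1,n}) \, d\lambda}{(\lambda - z)^2}
= \f{1}{2} \int_{\bbR} \f{\xi(\nu; A_{+,n}, A_-) \, d\nu}{(\nu^2 -z)^{3/2}}, \quad z \in \bbC \backslash [0,\infty).
\end{equation*}
Integrating this in $z$ from $z_0$ to $z$ along a path in $\bbC\backslash[0,\infty)$ (Fubini is justified since all integrands are jointly integrable for fixed $n$) converts the squared/cubed-power kernels into the differences of first/half powers appearing in \eqref{5.1}:
\begin{align*}
& \int_{[0,\infty)} \xi(\lambda; \bsH_{2,n}, \bsH_{1,n}) \, d\lambda \, \big[(\lambda-z)^{-1}-(\lambda-z_0)^{-1}\big] \\
& \quad = \int_{\bbR} \xi(\nu; A_{+,n}, A_-)\, d\nu\, \big[(\nu^2-z)^{-1/2}-(\nu^2-z_0)^{-1/2}\big].
\end{align*}

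Next comes the limit $n\to\infty$, which is the heart of the argument. For the left-hand side I would use Theorem~\ref{t3.7}: trace-norm convergence of the resolvent differences $(\bsH_{2,n}-z\,\bsI)^{-1}-(\bsH_{1,n}-z\,\bsI)^{-1}$ to $(\bsH_2-z\,\bsI)^{-1}-(\bsH_1-z\,\bsI)^{-1}$ implies, via the Krein trace formula \eqref{3.45}, the convergence of $\tr_{L^2(\bbR;\cH)}$ of these differences, hence of $\int_{[0,\infty)} \xi(\lambda;\bsH_{2,n},\bsH_{1,n})\,d\lambda\,(\lambda-z)^{-2}$ to the analogous quantity for $(\bsH_2,\bsH_1)$, for every $z\in\bbC\backslash[0,\infty)$; a dominated-convergence / vague-convergence argument on the measures $\xi(\lambda;\bsH_{2,n},\bsH_{1,n})\,d\lambda$ then handles the $[(\lambda-z)^{-1}-(\lambda-z_0)^{-1}]$-integral. (One may also need the Fredholm-determinant representation of these spectral shift functions, as mentioned in step~$\mathbf{IV}$ of Section~\ref{s2}, to pin down the normalizing constant and control the measures near $\lambda=0$ and at infinity.) For the right-hand side I would invoke Corollary~\ref{cB.9}: the function $g(\nu)=(\nu^2-z)^{-1/2}-(\nu^2-z_0)^{-1/2}$ satisfies $\esssup_{\nu}|(\nu^2+1)g(\nu)|<\infty$ (it decays like $\nu^{-3}$ at infinity and is bounded on compacts since $z,z_0\notin[0,\infty)$), so \eqref{B.119} applies and gives convergence of the right-hand integral. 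Combining both limits yields \eqref{5.1}. The delicate point here is the behaviour of $\xi(\,\cdot\,;\bsH_{2,n},\bsH_{1,n})$ near $\lambda=0$: one must rule out mass escaping to or concentrating at $0$, which is exactly where the trace-class control from Theorem~\ref{t3.7} (together with the standard fact that $\xi(\,\cdot\,;\bsH_2,\bsH_1)$ vanishes for $\lambda<0$, \eqref{2.46c}) does the work.

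Finally, to get \eqref{5.2} from \eqref{5.1} I would run a Stieltjes inversion. Both sides of \eqref{5.1}, after dividing out the harmless $z_0$-term, are Herglotz-type (or at least Cauchy-transform) functions of $z$: the left-hand side is $-\int_{[0,\infty)}\xi(\lambda;\bsH_2,\bsH_1)\,(\lambda-z)^{-1}\,d\lambda$ up to the $z_0$ constant, so the Stieltjes inversion formula recovers $\xi(\lambda;\bsH_2,\bsH_1)$ a.e.\ as $(1/\pi)\lim_{\varepsilon\downarrow0}\Im[\,\cdot\,(\lambda+i\varepsilon)]$. On the right, I would substitute $\nu^2=\mu$ (so $d\nu = d\mu/(2\sqrt{\mu})$ on $\nu>0$, doubling for the symmetric contribution from $\nu<0$, using that $(\nu^2-z)^{-1/2}$ is even in $\nu$), rewrite the right-hand side as $\int_{[0,\infty)} \eta(\mu)\,(\mu-z)^{-1/2}\,d\mu$ for an appropriate $\eta$ built from $\xi(\,\cdot\,;A_+,A_-)$, and compute the boundary values of $(\mu-z)^{-1/2}$ as $z\to\lambda+i0$: for $\mu>\lambda$ it is real, while for $0<\mu<\lambda$ its imaginary part is $(\lambda-\mu)^{-1/2}$ (up to sign determined by the branch cut along $[0,\infty)$). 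Taking $\Im$ and dividing by $\pi$ collapses the $\mu$-integral to $[0,\lambda]$ and, after undoing the substitution, produces exactly $(1/\pi)\int_{-\lambda^{1/2}}^{\lambda^{1/2}} \xi(\nu;A_+,A_-)(\lambda-\nu^2)^{-1/2}\,d\nu$. The main obstacle in this last step is purely technical bookkeeping of the branch of $(\,\cdot\,-z)^{1/2}$ and justifying the interchange of $\lim_{\varepsilon\downarrow0}$ with the integral (done via the Privalov/Fatou theorem for Cauchy integrals of $L^1$ densities, using the integrability furnished by \eqref{5.1} itself together with the a.e.\ existence of boundary values); the anticipated real obstacle overall, as noted above, is controlling the $n\to\infty$ limit uniformly near $\lambda=0$, which is where the sharpened convergence in Theorem~\ref{t3.7} is indispensable.
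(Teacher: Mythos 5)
Your proposal is correct and follows essentially the same route as the paper: the approximate trace formula \eqref{krn}, its integrated form \eqref{krtrn}, passage to the limit $n\to\infty$ on the right-hand side via Corollary \ref{cB.9} with $g(\nu)=(\nu^2-z)^{-1/2}-(\nu^2-z_0)^{-1/2}$, and a Stieltjes inversion at the end. The one step where the paper is more concrete than you are is the limit \eqref{trH} of the left-hand side against the kernel $(\lambda-z)^{-1}-(\lambda-z_0)^{-1}$: rather than a vague-convergence argument on the measures $\xi(\lambda;\bsH_{2,n},\bsH_{1,n})\,d\lambda$ (which by itself would require a uniform bound to exclude escape of mass and to fix the additive normalization), the paper verifies the hypotheses \eqref{detcont}, \eqref{det}, \eqref{B2conv}, \eqref{B1conve} via Lemma \ref{l3.5}\,$(i)$ and Theorem \ref{t3.7} and then applies the modified Fredholm determinant identity \eqref{detlim} of Appendix \ref{sA} to the pairs $(\bsH_{2,n},\bsH_{1,n})$ and $(\bsH_2,\bsH_1)$ --- precisely the determinant route you mention only parenthetically.
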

\begin{proof} 
Due to relation \eqref{intB'n}, \cite{CGPST14} and \cite{Pu08} apply and one concludes the approximate trace formula,
\begin{align}
\begin{split} 
& \tr_{L^2(\bbR; \cH)}\big((\bsH_{2,n} - z \, \bsI)^{-1}-(\bsH_{1,n} - z \, 
\bsI)^{-1}\big)    \\
& \quad = \f{1}{2z} \tr_{\cH} \big(g_z(A_{+,n})-g_z(A_-)\big),   \quad 
n \in \bbN, \; z\in \bbC \backslash [0,\infty),     \lb{trn}
\end{split} 
\end{align} 
with
\begin{equation}   
g_z(x) = x(x^2-z)^{-1/2}, \quad z\in\C\backslash [0,\infty), \; x\in\bbR.   
\end{equation} 
Relation \eqref{trn}  and the Krein--Lifshitz trace formula yield 
\begin{equation}
\int_{[0,\infty)} \f{\xi(\lambda; \bsH_{2,n}, \bsH_{1,n}) \, d\lambda}{(\lambda - z)^2} 
= \f{1}{2} \int_{\bbR} \f{\xi(\nu; A_{+,n}, A_-) \, d\nu}{(\nu^2 -z)^{3/2}}, \quad 
n \in \bbN, \; z \in \bbC \backslash [0,\infty).      \lb{krn}
\end{equation}
As shown in the course of the proof of  Theorem\ 8.2 in \cite{GLMST11}, \eqref{krn} 
implies the relation
\begin{align} 
& \int_{[0,\infty)} \xi(\lambda; \bsH_{2,n}, \bsH_{1,n}) \, d \lambda 
\big[(\lambda - z)^{-1} - (\lambda - z_0)^{-1}\big]     \no \\
& \quad = \int_{\bbR} \xi(\nu; A_{+,n}, A_-) \, d\nu \big[(\nu^2 - z)^{-1/2} - (\nu^2 - z_0)^{-1/2}\big],    \lb{krtrn} \\
& \hspace*{5cm}  n \in \bbN, \; z, z_0 \in \bbC \backslash [0,\infty).   \no
\end{align}

Combining Theorem \ref{t3.7} with the Krein--Lifshitz trace formula 
\eqref{3.45} (for the pair $(\bsH_2,\bsH_1)$ as well as the pairs 
$(\bsH_{2,n}, \bsH_{1,n})$, $n\in\bbN$), yields
\begin{align}
& \lim_{n\to\infty} \int_{[0,\infty)} \f{\xi(\lambda; \bsH_{2,n}, \bsH_{1,n}) 
\, d\lambda}{(\lambda - z)^2}    \no \\
& \quad = - \lim_{n\to\infty} \tr_{L^2(\bbR; \cH)}\big((\bsH_{2,n} - z \, \bsI)^{-1} 
- (\bsH_{1,n} - z \, \bsI)^{-1}\big)    \no \\
& \quad = - \tr_{L^2(\bbR; \cH)}\big((\bsH_2 - z \, \bsI)^{-1} - (\bsH_1 - z \, \bsI)^{-1}\big)  \no \\
& \quad = \int_{[0,\infty)} \f{\xi(\lambda; \bsH_2, \bsH_1) 
\, d\lambda}{(\lambda - z)^2}, \quad z \in \bbC \backslash \bbR.     \lb{trlimH}
\end{align}
Lemma \ref{l3.5}\,$(i)$ and Theorem \ref{t3.7} imply that the pairs of self-adjoint operators 
$(\bsH_{2,n}, \bsH_{1,n})$, $n \in \bbN$, and $(\bsH_2,\bsH_1)$ satisfy the hypotheses 
\eqref{detcont}, \eqref{det},\eqref{B2conv}, \eqref{B1conve} (identifying the pairs $(A_n, A_{0,n})$ and $(A,A_0)$ with the pairs $(\bsH_{2,n}, \bsH_{1,n})$ and $(\bsH_2,\bsH_1)$, respectively). Thus, an 
application of \eqref{detlim} to the pairs $(\bsH_{2,n}, \bsH_{1,n})$ and $(\bsH_2,\bsH_1)$ 
implies 
\begin{align}
\begin{split} 
& \lim_{n\to\infty} \int_{[0,\infty)} \xi(\lambda; \bsH_{2,n}, \bsH_{1,n}) 
\, d\lambda \big[(\lambda - z)^{-1} - (\lambda - z_0)^{-1}\big]     \\
& \quad = \int_{[0,\infty)} \xi(\lambda; \bsH_2, \bsH_1) 
\, d\lambda\big[(\lambda - z)^{-1} - (\lambda - z_0)^{-1}\big] , 
\quad z, z_0 \in \bbC \backslash \bbR.     \lb{trH} 
\end{split} 
\end{align}
On the other hand, since 
\begin{equation}
\big[(\nu^2 - z)^{-1/2} - (\nu^2 - z_0)^{-1/2}\big] \underset{|\nu| \to \infty}{=} 
\Oh\big(|\nu|^{-3}\big), \quad z, z_0 \in \bbC \backslash \bbR, 
\end{equation}
and hence $(\nu^2 + 1) \big[(\nu^2 - z)^{-1/2} - (\nu^2 - z_0)^{-1/2}\big]$ is uniformly bounded 
for $\nu \in \bbR$, \eqref{B.119} yields 
\begin{align} 
\begin{split}
& \lim_{n \to \infty} \int_{\bbR} \xi(\nu; A_{+,n}, A_-) d\nu \, 
\big[(\nu^2 - z)^{-1/2} - (\nu^2 - z_0)^{-1/2}\big]    \\
& \quad = \int_{\bbR} \xi(\nu; A_+, A_-) d\nu \, \big[(\nu^2 - z)^{-1/2} - (\nu^2 - z_0)^{-1/2}\big], 
\quad z, z_0 \in \bbC \backslash [0,\infty).    \lb{xiA} 
\end{split}
\end{align}
Thus, combining \eqref{krtrn}, \eqref{trH}, and \eqref{xiA} one finally obtains 
\begin{align} 
& \int_{[0,\infty)} \xi(\lambda; \bsH_2, \bsH_1) \, d \lambda 
\, \big[(\lambda - z)^{-1} - (\lambda - z_0)^{-1}\big]    \no \\
& \quad = \lim_{n \to \infty} \int_{[0,\infty)} \xi(\lambda; \bsH_{2,n}, \bsH_{1,n}) \, d \lambda 
\, \big[(\lambda - z)^{-1} - (\lambda - z_0)^{-1}\big]    \no \\
& \quad = \lim_{n \to \infty} \int_{\bbR} \xi(\nu; A_{+,n}, A_-) \, d\nu 
\big[(\nu^2 - z)^{-1/2} - (\nu^2 - z_0)^{-1/2}\big]    \no \\
& \quad = \int_{\bbR} \xi(\nu; A_+, A_-) \, d\nu \big[(\nu^2 - z)^{-1/2} - (\nu^2 - z_0)^{-1/2}\big], 
\quad z, z_0 \in \bbC \backslash [0,\infty),    \lb{xixi} 
\end{align}
and hence \eqref{5.1}.

Applying the Stieltjes inversion formula (see the discussion in \cite{AD56} and 
in \cite[Appendix~B]{We80}) to \eqref{xixi} then proves \eqref{5.2} precisely along 
the lines detailed in the proof of \cite[Theorem~8.2]{GLMST11}.  
\end{proof}

Equation \eqref{5.2} now represents a far reaching extension of 
Pushnitski's formula originally obtained in \cite{Pu08} and considerably generalized in 
\cite{GLMST11}. In particular, the relative trace class assumption employed 
in \cite{GLMST11} has now been removed.

\section{The Witten Index} \lb{s6}

In this section we briefly discuss the notion of the Witten index for $\bsD_\bsA^{}$ 
following the detailed treatment in \cite{CGPST14a}. The results of the present paper 
now enable us to remove the ``relatively trace class perturbation assumption'' in \cite{CGPST14a} as well 
as the Fredholm hypothesis in \cite{GLMST11}. 

\begin{definition} \lb{d8.1} 
Let $T$ be a closed, linear, densely defined operator in $\cH$ and  
suppose that for some $($and hence for all\,$)$ 
$z \in \bbC \backslash [0,\infty)$,  
\begin{equation} 
\big[(T^* T - z I_{\cH})^{-1} - (TT^* - z I_{\cH})^{-1}\big] \in \cB_1(\cH).   \lb{8.1} 
\end{equation}  
Then introducing the resolvent regularization 
\begin{equation}
\Delta_r(T, \lambda) = (- \lambda) \tr_{\cH}\big((T^* T - \lambda I_{\cH})^{-1}
- (T T^* - \lambda I_{\cH})^{-1}\big), \quad \lambda < 0,        \lb{8.2} 
\end{equation} 
the resolvent regularized Witten index $W_r (T)$ of $T$ is defined by  
\begin{equation} 
W_r(T) = \lim_{\lambda \uparrow 0} \Delta_r(T, \lambda),      \lb{8.3}
\end{equation}
whenever this limit exists. 
\end{definition} 

Here, in obvious notation, the subscript ``$r$'' indicates the use of the resolvent 
regularization (for a semigroup or heat kernel regularization we refer to \cite{CGPST14a}; 
the heat kernel regularization yields results consistent with the resolvent regularization, see, 
\cite{CGPST14a}). Before proceeding to compute the Witten index we recall the known 
consistency between the Fredholm and Witten index whenever $T$ is Fredholm:

\begin{theorem} $($\cite{BGGSS87}, \cite{GS88}.$)$  \lb{t8.2} 
Suppose that $T$ is a Fredholm operator in $\cH$. If \eqref{8.1} holds, then the 
resolvent regularized Witten index $W_r(T)$ exists, equals the Fredholm index, 
$\ind (T)$, of $T$, and
\begin{equation} 
W_r(T) =  \ind (T) = \xi(0_+; T T^*, T^* T).      \lb{8.4}
\end{equation}
\end{theorem}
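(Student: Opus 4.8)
The plan is to establish Theorem~\ref{t8.2} by reducing the Witten index to a known index-theoretic identity that holds for Fredholm operators, and then separately reconciling the spectral shift function value at $0_+$ with the same quantity. The starting point is the observation that when $T$ is Fredholm, both $\ker(T) = \ker(T^*T)$ and $\ker(T^*) = \ker(TT^*)$ are finite-dimensional, and $0$ is an isolated point (or at least not an accumulation point from within a spectral gap) of $\sigma(T^*T)$ and $\sigma(TT^*)$ in a sense strong enough to extract the limit in \eqref{8.3}. Concretely, I would first recall that for $\lambda<0$ the trace in \eqref{8.2} is finite by hypothesis \eqref{8.1}, and then decompose the resolvent difference using the spectral projections $P_1 = \chi_{\{0\}}(T^*T)$ and $P_2 = \chi_{\{0\}}(TT^*)$ onto the respective kernels. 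On the ranges of $I_{\cH}-P_1$ and $I_{\cH}-P_2$ the operators $T^*T$ and $TT^*$ are bounded below by some $\delta>0$ (Fredholmness), so $(-\lambda)[(T^*T-\lambda)^{-1}(I-P_1) - (TT^*-\lambda)^{-1}(I-P_2)]$ has trace tending to $0$ as $\lambda\uparrow 0$; this is where one uses that $T^*T$ and $TT^*$ have the same nonzero spectrum with multiplicities (via the polar decomposition / the intertwining $T(T^*T) = (TT^*)T$), so the ``bulk'' contributions cancel in trace norm up to something vanishing. What survives is $(-\lambda)\tr\big((-\lambda)^{-1}P_1 - (-\lambda)^{-1}P_2\big) = \dim\ker(T^*T) - \dim\ker(TT^*) = \dim\ker(T) - \dim\ker(T^*) = \ind(T)$, giving $W_r(T) = \ind(T)$.

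Next I would address the identification $\xi(0_+; TT^*, T^*T) = \ind(T)$. The spectral shift function for the pair $(TT^*, T^*T)$ is well-defined since their resolvent difference is trace class by \eqref{8.1}, and normalized by $\xi(\lambda; TT^*, T^*T) = 0$ for $\lambda<0$ since both operators are nonnegative. The Krein trace formula gives $\tr\big((TT^*-z)^{-1} - (T^*T-z)^{-1}\big) = -\int_{[0,\infty)} \xi(\lambda; TT^*, T^*T)\,d\lambda/(\lambda-z)^2$ for $z\in\bbC\setminus[0,\infty)$. On the other hand, by the same isospectrality of the nonzero parts of $TT^*$ and $T^*T$, the resolvent difference equals $(-z)^{-1}(P_1 - P_2)$ plus a term supported on $[\delta,\infty)$ that integrates against $(\lambda-z)^{-2}$ with a bounded contribution; matching the $z\to 0^-$ singularity (a simple pole of residue-type behavior in $-z$) forces $\xi(\cdot; TT^*, T^*T)$ to jump by $\dim\ker(T) - \dim\ker(T^*)$ at $\lambda=0$, so that $\xi(0_+; TT^*, T^*T) = \ind(T)$. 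Alternatively, and perhaps more cleanly, one can cite the established result (this is exactly formula \eqref{1.13c} specialized to the Fredholm case, together with Theorem~\ref{t8.2}'s first assertion) or invoke \cite{BGGSS87}, \cite{GS88} directly, since the theorem statement already attributes it to those references.

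Honestly, the cleanest route is simply to attribute the whole statement to \cite{BGGSS87} and \cite{GS88} as indicated in the theorem header, since the consistency of the Witten index with the Fredholm index for Fredholm $T$ is a classical fact established there; the present excerpt only needs it as a citation for comparison purposes, not as a novel contribution. So my proof plan reduces to: (1) recall \eqref{8.1} guarantees the trace in \eqref{8.2} is finite; (2) cite \cite{BGGSS87}, \cite{GS88} for $W_r(T) = \ind(T)$; (3) note the supplementary identity $\ind(T) = \xi(0_+; TT^*, T^*T)$ follows from the Krein trace formula combined with the elementary spectral analysis above, or equally is contained in the cited literature.

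The main obstacle, if one insists on a self-contained argument rather than a citation, is controlling the ``bulk'' cancellation: one must show that $(-\lambda)\big[(T^*T-\lambda I_{\cH})^{-1} - (TT^*-\lambda I_{\cH})^{-1}\big]$, after removing the finite-rank kernel projections, has trace tending to zero as $\lambda\uparrow 0$. This requires knowing not merely that the resolvent difference is trace class for each fixed $\lambda<0$, but that its trace norm (or at least its trace) stays controlled uniformly as $\lambda\uparrow 0$ once the kernel contributions are subtracted — which is exactly where the Fredholm hypothesis (spectral gap at $0$ away from the kernels) does the work, via the intertwining relation $f(TT^*)T = Tf(T^*T)$ that matches the nonzero spectra of $T^*T$ and $TT^*$ including multiplicities. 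Given that this is precisely the content of the cited papers, I would not reprove it in full but would sketch it in one or two lines and defer to \cite{BGGSS87}, \cite{GS88}.
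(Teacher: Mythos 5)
Your proposal is consistent with the paper's treatment: the paper gives no proof of Theorem~\ref{t8.2} at all, presenting it as a recalled classical fact and deferring entirely to \cite{BGGSS87} and \cite{GS88}, which is exactly the fallback you settle on. Your sketch of the self-contained argument (kernel projections, intertwining of the nonzero spectra of $T^*T$ and $TT^*$ via the polar decomposition, the spectral gap at $0$ from Fredholmness, and Krein's trace formula for the $\xi(0_+)$ identification) is a correct outline of the standard argument in those references, and you rightly flag the bulk-cancellation step as the only point requiring care.
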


The following result is proved in \cite[Theorem~2.6]{CGPST14a} under the 
assumption of a relatively trace class perturbation, however, the argument can  
be adapted to the present setting.

\begin{theorem}\label{t8.iff}
Assume Hypothesis \ref{h3.1}. Then the operator $\bsD_\bsA^{}$ is Fredholm if and only if  
$0 \in \rho(A_+) \cap \rho(A_-)$.
\end{theorem}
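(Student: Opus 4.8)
The plan is to establish both implications via the standard supersymmetric factorization, following the strategy of \cite[Theorem~2.6]{CGPST14a} but being careful that only Hypothesis~\ref{h3.1} (not the relatively trace class condition) is needed for this qualitative statement. First I would recall that $\bsD_\bsA^{} = \frac{d}{dt} + \bsA$ with $\bsA = \bsA_- + \bsB$, where $\bsB$ is a bounded, continuously differentiable operator family with $\|B'(\cdot)\|_{\cB(\cH)} \in L^1(\bbR; dt)$, so that the norm-resolvent limits $A_\pm = A_- + B_\pm$ exist. The key structural fact is that $\bsD_\bsA^{}$ is unitarily equivalent, via the Fourier transform in $t$, to a family/direct integral picture, but more usefully: one checks that $\ker(\bsD_\bsA^{}) = \{0\}$ and $\ker(\bsD_\bsA^*) = \{0\}$ automatically (solutions of $f' + A(t)f = 0$ are, after the standard reduction to the constant-coefficient asymptotic equations, exponentially growing at one end unless trivial, because $A_\pm$ being invertible forces a spectral gap; conversely non-invertibility produces the obstruction). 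So the Fredholm question reduces entirely to whether $\ran(\bsD_\bsA^{})$ is closed, equivalently whether $0 \notin \sigma_{\mathrm{ess}}(\bsH_1) \cup \sigma_{\mathrm{ess}}(\bsH_2)$.

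Next I would compute the essential spectrum of $\bsH_1 = \bsD_\bsA^* \bsD_\bsA^{}$ and $\bsH_2 = \bsD_\bsA^{} \bsD_\bsA^*$. Since $\|B'(\cdot)\|_{\cB(\cH)} \in L^1$, the family $A(t)$ converges in norm to $A_\pm$ as $t \to \pm\infty$, and a relative-compactness / decoupling argument (cutting $L^2(\bbR;\cH)$ into pieces near $+\infty$, near $-\infty$, and a compact middle piece, à la the standard proof for the essential spectrum of one-dimensional operators with asymptotically constant coefficients) shows
\begin{equation}
\sigma_{\mathrm{ess}}(\bsH_1) = \sigma_{\mathrm{ess}}(\bsH_2)
= \big[\dist(0,\sigma(A_-))^2, \infty\big) \cup \big[\dist(0,\sigma(A_+))^2,\infty\big).
\end{equation}
Indeed the asymptotic operator at $\pm\infty$ is $-\frac{d^2}{dt^2} + \bsA_\pm^2$ (constant fiber), whose spectrum is $[\inf \sigma(A_\pm^2), \infty) = [\dist(0,\sigma(A_\pm))^2, \infty)$. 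Hence $0 \notin \sigma_{\mathrm{ess}}(\bsH_1) \cup \sigma_{\mathrm{ess}}(\bsH_2)$ if and only if $\dist(0,\sigma(A_+)) > 0$ and $\dist(0,\sigma(A_-)) > 0$, that is, if and only if $0 \in \rho(A_+) \cap \rho(A_-)$.

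To finish: if $0 \in \rho(A_+) \cap \rho(A_-)$, then $\bsH_1$ and $\bsH_2$ have a spectral gap at $0$, so $0$ is either in the resolvent set or an isolated eigenvalue of finite multiplicity of each; combined with the above and with the elementary computation $\ker \bsH_1 = \ker \bsD_\bsA^{}$, $\ker \bsH_2 = \ker \bsD_\bsA^*$, we get $\bsD_\bsA^{}$ Fredholm (closed range because $0$ is not in the essential spectrum of $\bsH_1$, finite-dimensional kernel and cokernel). Conversely, if say $0 \in \sigma(A_-)$, then $0 \in \sigma_{\mathrm{ess}}(\bsH_1)$, so $\ran(\bsD_\bsA^{})$ is not closed and $\bsD_\bsA^{}$ fails to be Fredholm; symmetrically for $A_+$.

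The main obstacle, and the step requiring genuine care, is the essential spectrum computation: one must justify the decoupling rigorously in the infinite-dimensional fiber setting under only Hypothesis~\ref{h3.1}, i.e., with $\bsA_-$ possibly unbounded and only bounded, $L^1$-in-derivative perturbations $\bsB$. The clean way is to show that $[(\bsH_1 - zI)^{-1} - (\bsH_0 - zI)^{-1}]$ together with the analogous difference using the $A_+$-asymptotic operator localizes (is ``small at infinity''), so that the essential spectra of $\bsH_1$ and $\bsH_0^{(\pm)} = -\frac{d^2}{dt^2} + \bsA_\pm^2$ agree piece-by-piece; this is exactly the content that was obtained under stronger hypotheses in \cite{GLMST11} and \cite{CGPST14a}, and here one observes that the argument there only ever used norm-resolvent convergence of $A(t) \to A_\pm$ together with boundedness of $B(\cdot)$, both of which follow from Hypothesis~\ref{h3.1} alone.
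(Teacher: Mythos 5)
First, a point of reference: the paper does not actually prove Theorem \ref{t8.iff}; it only cites \cite[Theorem~2.6]{CGPST14a} and asserts that the argument "can be adapted to the present setting." So there is no in-paper argument to match yours against line by line, and your overall skeleton --- reduce Fredholmness of $\bsD_\bsA^{}$ to $0\notin\sigma_{\rm ess}(\bsH_1)\cup\sigma_{\rm ess}(\bsH_2)$ and identify $\inf\sigma_{\rm ess}(\bsH_j)$ with $\min\big(\dist(0,\sigma(A_+))^2,\dist(0,\sigma(A_-))^2\big)$ via the asymptotic constant-fiber operators --- is the standard and correct one. However, two points need attention. The lesser one: your claim that $\ker(\bsD_\bsA^{})=\ker(\bsD_\bsA^*)=\{0\}$ "automatically" is false; it would force $\ind(\bsD_\bsA^{})=0$ always, contradicting the index $=$ spectral flow theme of the paper. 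Already for $\cH=\bbC$, $A(t)=\tanh(t)$ (so $A_\pm=\pm 1$ are invertible) one has $0\neq 1/\cosh(\cdot)\in\ker\bsD_\bsA^{}$. Fortunately your concluding paragraph does not use this claim, since finite-dimensionality of the kernels already follows from $0\notin\sigma_{\rm ess}(\bsH_j)$; simply delete the assertion.

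The genuine gap is the essential-spectrum computation. You correctly single it out as the delicate step, but then dispose of it by asserting that the decoupling arguments of \cite{GLMST11}, \cite{CGPST14a} "only ever used norm-resolvent convergence of $A(t)\to A_\pm$ together with boundedness of $B(\cdot)$." That is not substantiated and is doubtful: under Hypothesis \ref{h3.1} alone the cross terms $\bsA_-(\bsB-\bsB_\pm)$ occurring in $\bsA^2-\bsA_\pm^2$ need not be $\bsH_0$-bounded --- the paper introduces Hypothesis \ref{h3.3}\,$(i)$ (equivalently \ref{h3.4}\,$(v)$) precisely to obtain the decomposition \eqref{3.27}, and Remark \ref{r3.5} stresses that relative boundedness of $\bsA_-\bsB$ is not automatic. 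Hence the resolvent-difference comparison of $\bsH_1$ with $-d^2/dt^2+\bsA_\pm^2$ on which your decoupling rests is not available under the stated hypotheses. A second, independent obstruction to the naive "three pieces" argument in infinite-dimensional fibers is that the embedding $W^{1,2}([-T,T];\cH)\hookrightarrow L^2([-T,T];\cH)$ is not compact, so the "compact middle piece" is not compact. For the direction you need it, the inclusion $0\in\sigma(A_\pm)\Rightarrow 0\in\sigma_{\rm ess}(\bsH_j)$ is the easy one and can be obtained from explicit Weyl sequences $\phi_k\otimes u_k$ with $u_k$ approximate null vectors of $A_\pm$ and $\phi_k$ slowly varying bumps drifting to $\pm\infty$; for the converse direction you should abandon resolvent comparisons and argue at the level of quadratic forms, using $\|\bsD_\bsA^{} f\|^2=\|f'\|^2+\|\bsA f\|^2-(f,\bsB'f)$ together with $\|A(t)f(t)\|\geq\big(\dist(0,\sigma(A_\pm))-\|B(t)-B_\pm\|_{\cB(\cH)}\big)\|f(t)\|$ for $|t|$ large and a bound on $(f,\bsB'f)$ via $\int_{|t|\geq T}\|B'(t)\|_{\cB(\cH)}\,dt\to 0$, all of which uses only Hypothesis \ref{h3.1}. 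As written, the central step of your proof is asserted rather than proved.
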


Since generally, $\bsD_\bsA^{}$ is not a Fredholm operator in $L^2(\bbR; \cH)$, we now determine 
the resolvent regularized Witten index of $\bsD_\bsA^{}$ as follows:

\begin{theorem} \lb{t8.3} 
Assume Hypothesis \ref{h3.4} and assume that $0$ is a right 
and a left Lebesgue point of $\xi(\,\cdot\,\, ; A_+, A_-)$ $($denoted by $\Lxi(0_+; A_+,A_-)$ 
and $\Lxi(0_-; A_+, A_-)$$)$. Then $0$ is a right Lebesgue point of 
$\xi(\,\cdot\,\, ; \bsH_2, \bsH_1)$ $($denoted by $\Lxi(0_+; \bsH_2, \bsH_1)$$)$ 
and $W_r(\bsD_\bsA^{})$ exists and equals 
\begin{equation}
W_r(\bsD_\bsA^{}) = \Lxi(0_+; \bsH_2, \bsH_1) 
= [\Lxi(0_+; A_+,A_-) + \Lxi(0_-; A_+, A_-)]/2.     \lb{8.5}
\end{equation}
In particular, if $0 \in \rho(A_+) \cap \rho(A_-)$, then $\bsD_\bsA^{}$ is Fredholm and
\begin{equation} 
\ind(\bsD_\bsA^{}) = W_r(\bsD_\bsA^{}) = \xi(0; A_+, A_-).     \lb{8.6} 
\end{equation} 
\end{theorem}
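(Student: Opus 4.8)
The plan is to derive \eqref{8.5} from the Pushnitski-type formula \eqref{5.2} already established in Theorem~\ref{t4.4}, combined with a Lebesgue-point analysis of the integral transform relating $\xi(\,\cdot\,;\bsH_2,\bsH_1)$ to $\xi(\,\cdot\,;A_+,A_-)$. First I would observe that \eqref{5.2} writes $\xi(\lambda;\bsH_2,\bsH_1)$ as an average of $\xi(\,\cdot\,;A_+,A_-)$ against the kernel $\pi^{-1}(\lambda-\nu^2)^{-1/2}$ over $\nu\in(-\lambda^{1/2},\lambda^{1/2})$. The assumption that $0$ is a right and left Lebesgue point of $\xi(\,\cdot\,;A_+,A_-)$ means that $\frac{1}{\varepsilon}\int_0^\varepsilon\xi(\nu;A_+,A_-)\,d\nu\to\Lxi(0_+;A_+,A_-)$ and similarly from the left. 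Substituting $\nu=\lambda^{1/2}u$ in \eqref{5.2} turns the right-hand side into $\pi^{-1}\int_{-1}^1\xi(\lambda^{1/2}u;A_+,A_-)(1-u^2)^{-1/2}\,du$, so as $\lambda\downarrow 0$ the positive-$u$ and negative-$u$ contributions converge, by a standard dominated-convergence plus Lebesgue-point argument, to $\pi^{-1}\Lxi(0_+;A_+,A_-)\int_0^1(1-u^2)^{-1/2}\,du + \pi^{-1}\Lxi(0_-;A_+,A_-)\int_{-1}^0(1-u^2)^{-1/2}\,du$. Since $\int_0^1(1-u^2)^{-1/2}\,du=\pi/2$, this yields $\frac{1}{\varepsilon}\int_0^\varepsilon\xi(\lambda;\bsH_2,\bsH_1)\,d\lambda\to[\Lxi(0_+;A_+,A_-)+\Lxi(0_-;A_+,A_-)]/2$, i.e.\ $0$ is a right Lebesgue point of $\xi(\,\cdot\,;\bsH_2,\bsH_1)$ with the asserted value $\Lxi(0_+;\bsH_2,\bsH_1)$.

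Next I would connect $\Lxi(0_+;\bsH_2,\bsH_1)$ to the resolvent-regularized Witten index $W_r(\bsD_\bsA^{})$ via the abstract result \eqref{1.13c}. The definition \eqref{8.2}--\eqref{8.3} together with the Krein--Lifshitz trace formula \eqref{3.45} gives
\begin{equation*}
\Delta_r(\bsD_\bsA^{},\lambda)=(-\lambda)\int_{[0,\infty)}\frac{\xi(\mu;\bsH_2,\bsH_1)\,d\mu}{(\mu-\lambda)^2},\quad\lambda<0,
\end{equation*}
and the standard Abelian/Tauberian argument (as carried out in \cite{CGPST14a}, relying only on \eqref{3.32} and the right Lebesgue point property just established) shows that $\lim_{\lambda\uparrow 0}\Delta_r(\bsD_\bsA^{},\lambda)=\Lxi(0_+;\bsH_2,\bsH_1)$. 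Combining with the previous paragraph yields the full chain \eqref{8.5}. For the last claim, if $0\in\rho(A_+)\cap\rho(A_-)$, then by Theorem~\ref{t8.iff} the operator $\bsD_\bsA^{}$ is Fredholm; the spectral shift function $\xi(\,\cdot\,;A_+,A_-)$ is continuous near $0$ (both operators are boundedly invertible there, so $0$ is not in either spectrum and $\xi$ is locally constant by the $\cB_1$ resolvent difference \eqref{3.13} and the standard properties of the SSF away from the spectra), hence $\Lxi(0_\pm;A_+,A_-)=\xi(0;A_+,A_-)$, and Theorem~\ref{t8.2} identifies $W_r(\bsD_\bsA^{})$ with $\ind(\bsD_\bsA^{})$, giving \eqref{8.6}.

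The main obstacle I anticipate is the rigorous justification that $0$ being a right/left Lebesgue point of $\xi(\,\cdot\,;A_+,A_-)$ genuinely transfers through the singular integral in \eqref{5.2} to a right Lebesgue point of $\xi(\,\cdot\,;\bsH_2,\bsH_1)$ with exactly the stated value. The kernel $(1-u^2)^{-1/2}$ is integrable but not bounded near $u=\pm1$, so one must argue that the endpoint contributions $u\to\pm1$ (which correspond to $\nu\to\pm\lambda^{1/2}$, i.e.\ $\nu$ of order $\lambda^{1/2}$, not near $0$) do not spoil the averaging: this requires controlling $\xi(\,\cdot\,;A_+,A_-)$ on a full neighborhood of $0$, not just its Lebesgue-point behavior. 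The resolution — exactly as in the proof of \cite[Theorem~8.2]{GLMST11} and the corresponding passage in \cite{CGPST14a} — is that the $L^1_{\mathrm{loc}}$ (indeed $L^1(\bbR;d\nu)$ after the normalization fixed via Theorem~\ref{tB.8}) membership of $\xi(\,\cdot\,;A_+,A_-)$ supplies the needed dominated-convergence bound, so the only genuinely new input beyond \eqref{5.2} is bookkeeping. The rest of the argument is a routine adaptation of \cite{CGPST14a}, the only substantive change being that the relatively trace class hypothesis there is now replaced by Hypothesis~\ref{h3.4}, under which all the inputs \eqref{3.32}, \eqref{5.2}, and Theorem~\ref{t8.iff} have already been secured.
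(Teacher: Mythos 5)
Your proposal is correct and follows essentially the same route as the paper: rewrite \eqref{5.2} as a one-sided average of $\xi(\nu;A_+,A_-)+\xi(-\nu;A_+,A_-)$ and invoke the Lebesgue-point transfer lemma of \cite{CGPST14a} (your substitution $\nu=\lambda^{1/2}u$ is exactly what that lemma encapsulates, and you correctly flag the endpoint singularity of $(1-u^2)^{-1/2}$ as the only delicate point), then pass to $W_r$ via the trace-formula chain, and finally use Theorems \ref{t8.2} and \ref{t8.iff} plus local constancy of $\xi(\,\cdot\,;A_+,A_-)$ for the Fredholm case. The sole (cosmetic) difference is that you take the final Abelian limit on the $\bsH$-side integral $(-\lambda)\int_{[0,\infty)}\xi(\mu;\bsH_2,\bsH_1)(\mu-\lambda)^{-2}d\mu$ using the just-established right Lebesgue point of $\xi(\,\cdot\,;\bsH_2,\bsH_1)$, whereas the paper applies \cite[Lemma~4.2]{CGPST14a} directly to the $A_\pm$-side integral $\tfrac{z}{2}\int_{\bbR}\xi(\nu;A_+,A_-)(\nu^2-z)^{-3/2}\,d\nu$; both limits are justified by the same circle of results.
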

\begin{proof}
One can closely follow the argument as used in  \cite{CGPST14a}.
First, one rewrites \eqref{5.2} in the form,
\begin{equation} 
\xi(\lambda; \bsH_2, \bsH_1) = \frac{1}{\pi}\int_0^{\lambda^{1/2}}
\frac{d \nu \, [\xi(\nu; A_+,A_-) + \xi(-\nu; A_+,A_-)]}{(\lambda-\nu^2)^{1/2}},  
\quad \lambda > 0.     \lb{55f}
\end{equation} 
Applying \cite[Lemma~4.1\,(i)]{CGPST14a} to the function 
$f(\nu) = [\xi(\nu,A_+,A_-)+\xi(-\nu,A_+,A_-)]$, 
$\nu > 0$, yields the Lebesgue point statement for $\xi(\,\cdot\,\, ; \bsH_2, \bsH_1)$ and 
also proves that $\Lxi(0_+; \bsH_2, \bsH_1) = [\Lxi(0_+; A_+,A_-) + \Lxi(0_-; A_+, A_-)]/2$. 

Next, we note that combining Corollary \ref{cB.9}, \eqref{krn}, \eqref{trlimH} yields
\begin{align}
& - z\tr_{L^2(\bbR;\cH)} \big((\bsH_2 - z \bsI)^{-1} - (\bsH_1 - z \bsI)^{-1}\big) 
= z \int_{[0,\infty)} \f{\xi(\lambda'; \bsH_2, \bsH_1)\, d\lambda'}{(\lambda' - z)^2}   \no \\
& \quad = z \lim_{n \to \infty} \int_{[0,\infty)} \f{\xi(\lambda'; \bsH_{2,n}, \bsH_{1,n})
\, d\lambda'}{(\lambda' - z)^2}   
= \f{z}{2} \lim_{n \to \infty} \int_{\bbR} \frac{\xi(\nu; A_{+,n}, A_-) \, d\nu}{(\nu^2 - z)^{3/2}}, 
\no \\
& \quad = \f{z}{2} \int_{\bbR} \frac{\xi(\nu; A_+, A_-) \, d\nu}{(\nu^2 - z)^{3/2}} 
\quad z \in \bbC \backslash [0,\infty).      \lb{8.7} 
\end{align}
Thus, one obtains existence of $W_r(\bsD_\bsA^{})$ (cf.\ \eqref{8.3}) and its equality with 
the expression $[\Lxi(0_+; A_+,A_-) + \Lxi(0_-; A_+, A_-)]/2$ upon applying 
\cite[Lemma~4.2]{CGPST14a} to the last term in \eqref{8.7}, with $z = \lambda < 0$. 

In the case where $A_\pm$ are boundedly invertible, the equality 
$\ind(\bsD_\bsA^{}) = W_r(\bsD_\bsA^{})$ immediately follows from Theorems \ref{t8.2} and \ref{t8.iff}. In addition, since $0\in\rho(A_+)\cap\rho(A_-)$, the function $\xi(\cdot; A_+, A_-)$ 
is constant on some interval $(-\varepsilon,\varepsilon)$ for some $\varepsilon>0$ (see, e.g., 
\cite[p.~300]{Ya92}), and hence $\Lxi(0_+; A_+,A_-)=\Lxi(0_-; A_+, A_-)=\xi(0; A_+,A_-)$ 
yields \eqref{8.6}. 
\end{proof}

In general (i.e., if $T$ is not Fredholm), $W_r(T)$ is not necessarily 
integer-valued; in fact, it can take on any prescribed real number (cf., e.g., the 
analysis in \cite{An90a}, \cite{BGGSS87}). In this context we recall the crucial fact 
that $W_r(T)$ has stability properties with respect to additive perturbations 
analogous to the Fredholm index, as long as one replaces the familiar 
relative compactness assumption on the perturbation in connection with the 
Fredholm index, by appropriate relative trace class conditions in connection with 
the Witten index, as shown in \cite{BGGSS87} and \cite{GS88}. In this context we also 
refer to \cite{CP74}.

\section{A $(1+1)$-Dimensional Example} \lb{s8}

In our final section we briefly illustrate our formalism in terms of a   
concrete $(1+1)$-dimensional example treated in great detail in \cite{CGLPSZ14} and 
\cite{CGLPSZ14a}. 

\begin{hypothesis} \lb{h9.1} 
Suppose the real-valued functions $\phi, \theta$ satisfy  
\begin{align}
& \, \phi \in AC_{\loc}(\bbR) \cap L^{\infty}(\bbR) \cap L^1(\bbR),  
\; \phi' \in L^{\infty}(\bbR),     \\
\begin{split} 
& \, \theta \in AC_{\loc}(\bbR) \cap L^{\infty}(\bbR), \; 
\theta' \in L^{\infty}(\bbR) \cap L^1(\bbR),      \\
& \lim_{t \to \infty} \theta (t) = 1, \; \lim_{t \to - \infty} \theta (t) = 0. 
\end{split} 
\end{align} 
\end{hypothesis}

Given Hypothesis \ref{h9.1}, we introduce the family of self-adjoint operators 
$A(t)$, $t \in \bbR$, in $L^2(\bbR)$, 
\begin{equation}
A(t) = - i \f{d}{dx} + \theta(t) \phi, \quad 
\dom(A(t)) = W^{1,2}(\bbR), \; t \in \bbR.     \lb{9.3} 
\end{equation}
Its  asymptotes as $t \to \pm \infty$ are given by 
\begin{align} 
& \nlim_{t \to \pm \infty} (A(t) - z I)^{-1} = (A_{\pm} - z I)^{-1}, \quad z \in \bbC \backslash \bbR, \\  
& A_+ = - i \f{d}{dx} + \phi, \quad A_- =  - i \f{d}{dx}, \quad 
\dom(A_{\pm}) = W^{1,2}(\bbR).  
\end{align}
(For simplicity, we adopt the abbreviation $I = I_{L^2(\bbR)}$ throughout this section.)

In addition, it is convenient to introduce the family of bounded operators 
$B(t)$, $t \in \bbR$, in $L^2(\bbR)$, where
\begin{equation}
B(t) = \theta(t) \phi, \quad \dom(B(t)) = L^{2}(\bbR), \;  t \in \bbR,
\end{equation}
implying 
\begin{equation} 
A(t) = A_- + B(t), \quad t \in \bbR.  
\end{equation} 
The asymptotes of $B(t)$, $t \in \bbR$, as $t \to \pm \infty$ are then given by 
\begin{equation} 
B_+ = \nlim_{t \to + \infty} B(t) = \phi, \quad B_- = \nlim_{t \to - \infty} B(t) = 0.  
\end{equation}

Introducing the operator $d/dt$ in $L^2\big(\bbR; dt; L^2(\bbR;dx)\big)$  by 
\begin{align}
& \bigg(\f{d}{dt}f\bigg)(t) = f'(t) \, \text{ for a.e.\ $t\in\bbR$,}    \no \\
& \, f \in \dom(d/dt) = \big\{g \in L^2\big(\bbR;dt;L^2(\bbR)\big) \, \big|\,
g \in AC_{\loc}\big(\bbR; L^2(\bbR)\big), \\
& \hspace*{6.3cm} g' \in L^2\big(\bbR;dt;L^2(\bbR)\big)\big\}   \no \\
& \hspace*{2.3cm} = W^{1,2} \big(\bbR; dt; L^2(\bbR; dx)\big).      \label{2.ddtR} 
\end{align} 
one now defines $\bsA$, $\bsB$, $\bsA_-$, $\bsA' = \bsB'$, $\bsD_\bsA^{}$, $\bsH_0$, 
and the pair $(\bsH_2, \bsH_1)$ in the Hilbert space $L^2\big(\bbR; dt; L^2(\bbR; dx)\big)$ 
as in Section \ref{s3} and for notational simplicity we agree to identify 
$L^2\big(\bbR; dt; L^2(\bbR; dx)\big)$ with $L^2(\bbR^2; dt dx)$ in the following. In particular,
$\bsD_\bsA^{}$ in $L^2(\bbR^2)$ is of the form  
\begin{equation}
\bsD_\bsA^{} = \f{d}{dt} + \bsA,
\quad \dom(\bsD_\bsA^{})= W^{1,2}(\bbR^2),    
\end{equation}
with $\bsA$ defined as in \eqref{1.1} identifying $\cH = L^2(\bbR)$ and $A(t)$, $t \in \bbR$, 
is given by \eqref{9.3}.

Similarly, mimicking the approximation setup described \eqref{3.chin}--\eqref{Antprime} one introduces $A_n(t)$, its asymptotes $A_{\pm, n}$ as 
$t \to \pm \infty$, $B_n(t)$, $t \in \bbR$, $n \in \bbN$, and then also defines 
$\bsA_n$, $\bsB_n$, $\bsA'_n = \bsB'_n$, and the pair $(\bsH_{2,n}, \bsH_{1,n})$ in 
$L^2(\bbR^2; dt dx)$ as in Section \ref{s3}. 

As shown in \cite{CGLPSZ14}, the assumptions on $\phi$ and $\theta$ made in 
Hypothesis \ref{h9.1} guarantee that all conditions in Hypothesis \ref{h3.4} are met and 
the following results can be derived: 
\begin{equation}
\big[(A_+ - z I)^{-1} - (A_- - z I)^{-1}\big] \in \cB_1\big(L^2(\bbR)\big), 
\quad z \in \bbC \backslash \bbR,     \lb{9.13} 
\end{equation}
and thus, the spectral shift function $\xi(\, \cdot \, ; A_+, A_-)$ for the pair $(A_+, A_-)$ exists 
and is well-defined up to an arbitrary additive real constant, satisfying 
\begin{equation}
\xi(\, \cdot \, ; A_+, A_-) \in L^1\big(\bbR; (\nu^2 + 1)^{-1} d\nu\big). 
\end{equation}

Introducing $\chi_n(A_-) = n (A_-^2 + n^2 I)^{-1/2}$, $n \in \bbN$, according to \eqref{3.chin}, 
the fact 
\begin{equation}
A_{+,n} - A_- = \chi_n(A_-) B_+ \chi_n(A_-) \in \cB_1\big(L^2(\bbR)\big), \quad n \in \bbN, 
\end{equation}
implies that the spectral shift functions 
$\xi(\, \cdot \, ; A_{+,n}, A_-)$, $n \in \bbN$, exist and are uniquely determined by 
\begin{equation}
\xi(\, \cdot \, ; A_{+,n}, A_-) \in L^1(\bbR; d\nu), \quad n \in \bbN. 
\end{equation}
In fact, one can derive the expressions 
\begin{align} 
& \xi(\nu; A_{+,n}, A_-)  = \pi^{-1} \Im\big(\ln\big({\det}_{2, L^2(\bbR)}
\big(I + \sgn(\phi) |\phi|^{1/2} \chi_n(A_-) (A_- - (\nu + i 0) I)^{-1}    \no \\ 
& \quad \times \chi_n(A_-) |\phi|^{1/2}
\big)\big)\big) + \f{1}{2 \pi} \f{n^2}{\nu^2 + n^2} \int_{\bbR} dx \, \phi(x)  \,     
\text{ for a.e.\ } \nu \in \bbR, \; n \in \bbN,      \lb{B.45} 
\end{align} 
and when studying the limit $n\to\infty$ of $\xi(\, \cdot \, ; A_{+,n}, A_-)$ one can prove that 
\begin{equation}
\lim_{n \to \infty} \xi(\nu; A_{+,n}, A_-) = \f{1}{2 \pi} \int_{\bbR} dx \, \phi(x), 
\quad \nu \in \bbR. 
\end{equation}
In addition, one can show that Theorem \ref{tB.8} applies and hence $\xi(\, \cdot \, ; A_+, A_-)$ 
associated with the pair $(A_+, A_-)$, normalized according to \eqref{B.109} (see also the 
discussion in Appendix \ref{sA}, particularly, \eqref{B.36a}) is determined via
\begin{equation}
\lim_{n \to \infty} \xi(\nu; A_{+,n}, A_-) = \f{1}{2 \pi} \int_{\bbR} dx \, \phi(x) 
= \xi(\nu; A_+, A_-), \quad \nu \in \bbR.
\end{equation}
Thus, one obtains the remarkable fact that $\xi(\, \cdot \, ; A_+, A_-)$ turns out to be 
constant in this example. (This phenomenon is explored and explained in detail in  
\cite{CGLPSZ14} in terms of scattering theoretic notions).

Similarly, the facts,
\begin{align}
& \big[(\bsH_2 - z \, \bsI)^{-1} - (\bsH_1 - z \, \bsI)^{-1}\big] \in 
\cB_1\big(L^2(\bbR^2)\big), \quad z \in \bbC \backslash [0,\infty),      \\
& \big[(\bsH_{2,n}-z \, \bsI)^{-1} - (\bsH_{1,n}-z \, \bsI)^{-1}\big] \in \cB_1\big(L^2(\bbR^2)\big), 
\quad n \in \bbN, \; z \in \bbC \backslash [0,\infty),   
\end{align} 
show that the spectral shift functions $\xi(\, \cdot \, ; \bsH_2, \bsH_1)$ and 
$\xi(\, \cdot \, ; \bsH_{2,n}, \bsH_{1,n})$ for the pairs $(\bsH_2, \bsH_1)$ and 
$(\bsH_2, \bsH_1)$, $n \in \bbN$, respectively, are well-defined. In particular, they satisfy 
\begin{equation}
\xi(\, \cdot \, ; \bsH_2, \bsH_1), \, \xi(\, \cdot \, ; \bsH_{2,n}, \bsH_{1,n})
 \in L^1\big(\bbR; (\lambda^2 + 1)^{-1} d\lambda\big), \quad n \in \bbN, 
\end{equation} 
and since $\bsH_j\geq 0$, $\bsH_{j,n} \geq 0$, $n \in \bbN$, $j=1,2$, one uniquely introduces 
$\xi(\,\cdot\,; \bsH_2,\bsH_1)$ and $\xi(\, \cdot \, ; \bsH_{2,n}, \bsH_{1,n})$, $n\in\bbN$, 
by requiring that
\begin{equation}
\xi(\lambda; \bsH_2,\bsH_1) = 0, \quad  
\xi(\, \cdot \, ; \bsH_{2,n}, \bsH_{1,n}) = 0, \quad \lambda < 0, \; n \in \bbN.   
\end{equation}

As shown in \cite{CGLPSZ14}, one can now prove the following intimate connection between 
$\xi(\, \cdot \,; A_{+,n}, A_-)$ and $\xi(\, \cdot \, ; \bsH_{2,n}, \bsH_{1,n})$, $n \in \bbN$, 
the Pushnitski-type formula, 
\begin{equation}
\xi(\lambda; \bsH_{2,n}, \bsH_{1,n}) = \f{1}{\pi} \int_{- \lambda^{1/2}}^{\lambda^{1/2}} 
\f{\xi(\nu; A_{+,n}, A_-) d \nu}{(\lambda - \nu^2)^{1/2}} \, \text{ for a.e.~$\lambda > 0$, 
$n\in\bbN$.}    \lb{9.22}
\end{equation}
A careful investigation in \cite{CGLPSZ14} establishes the analog of \eqref{9.22} in the 
limit $n \to \infty$. However, we emphasize the following formula is not derived in 
\cite{CGLPSZ14} by attempting to take the limit $n \to \infty$ of either side in \eqref{9.22}; 
instead it is derived via a careful application of various trace formuas and the Stieltjes 
inversion formula resulting in 
\begin{equation}
\xi(\lambda; \bsH_2, \bsH_1) = \xi (\nu; A_+, A_-) = \f{1}{2 \pi} \int_{\bbR} dx \, \phi(x)    \lb{9.23}
\end{equation}
for a.e.~$\lambda > 0$ and a.e.~$\nu \in \bbR$. As a consequence of \eqref{9.23}, the 
Witten index $W_r(\bsD_\bsA^{})$ of the non-Fredholm operator $\bsD_\bsA^{}$ exists 
and equals 
\begin{equation}
W_r(\bsD_\bsA^{}) = \xi(0_+; \bsH_2, \bsH_1) = 
\xi(0; A_+, A_-) = \f{1}{2 \pi} \int_{\bbR} dx \, \phi(x).     \lb{9.24}
\end{equation}

\appendix
\section{Some Facts On Spectral Shift Functions, Trace Formulas, and Modified 
Fredholm Determinants} \lb{sA}
\renewcommand{\theequation}{A.\arabic{equation}}
\renewcommand{\thetheorem}{A.\arabic{theorem}}
\setcounter{theorem}{0} \setcounter{equation}{0}

We recall a few basic facts on spectral shift functions employed in the 
bulk of this paper and provide results on trace formulas in terms of modified Fredholm 
determinants.
 
Closely following \cite[Sects.~2--6]{BY93} and \cite[Ch.~8]{Ya92}, we provide a brief 
discussion of how to restrict the open constant in the definition of the spectral shift function 
$\xi(\, \cdot \, ; A, A_0)$ up to an integer for a pair of self-adjoint operators $(A,A_0)$ in $\cH$ 
satisfying for some (and hence for all) $z_0 \in \rho(A) \cap \rho(A_0)$, 
\begin{equation}
\big[(A - z_0 I_{\cH})^{-1} - (A_0 - z_0 I_{\cH})^{-1}\big] \in \cB_1(\cH).    \lb{B.26a} 
\end{equation}
Motivated by the unitary Cayley transforms of $A$ and $A_0$, one introduces the modified 
perturbation determinant,
\begin{align}
\begin{split} 
\wti D_{A/A_0}(z;z_0) = {\det}_{\cH} 
\big((A - z I_{\cH})(A - \ol{z_0} I_{\cH})^{-1} (A_0 - \ol{z_0} I_{\cH})(A_0 - z I_{\cH})^{-1}\big),&  \\     
z \in \rho(A) \cap \rho(A_0), \; \Im(z_0) > 0,&     \lb{B.27a}
\end{split} 
\end{align}
and notes that (cf.\ \cite[p.270]{Ya92}) 
\begin{equation}
\ol{\wti D_{A/A_0} (z; z_0)} = \wti D_{A/A_0} (\ol z; z_0)/ \wti D_{A/A_0} (z_0; z_0), \quad 
 \wti D_{A/A_0} (\ol{z_0}; z_0) =1, 
\end{equation}
and 
\begin{align}
\begin{split} 
{\tr}_{\cH} \big[(A - z I_{\cH})^{-1} - (A_0 - z I_{\cH})^{-1}\big] 
= - \f{d}{dz} \ln\big( \wti D_{A/A_0} (z; z_0)\big),&  \\ 
z \in \rho(A) \cap \rho(A_0), \; \Im(z_0) > 0.& 
\end{split}
\end{align}
In addition,
\begin{equation}
\f{\wti D_{A/A_0} (z; z_0)}{\wti D_{A/A_0} (\ol{z}; z_0)} 
= \f{\wti D_{A/A_0} (z; z_1)}{\wti D_{A/A_0} (\ol{z}; z_1)}, \quad   
z \in \rho(A) \cap \rho(A_0), \; \Im(z_0) > 0, \, \Im(z_1) > 0. 
\end{equation}
Then, defining
\begin{align}
\begin{split}
& \xi(\lambda; A,A_0; z_0) = (2\pi)^{-1} \lim_{\varepsilon \downarrow 0} 
\big[\Im\big(\ln\big(\wti D_{A/A_0} (\lambda + i \varepsilon; z_0)\big)\big)  \lb{B.31a} \\
& \hspace*{4.2cm} - \Im\big(\ln\big(\wti D_{A/A_0} (\lambda - i \varepsilon; z_0)\big)\big)\big]
\, \text{ for a.e.~$\lambda \in \bbR$,} 
\end{split}
\end{align}
one obtains for $z \in \rho(A) \cap \rho(A_0)$, $\Im(z_0) > 0$, $\Im(z_1) > 0$, 
\begin{align}
&  \xi(\, \cdot \,; A,A_0; z_0) \in L^1\big(\bbR; (\lambda^2 + 1)^{-1} d\lambda\big),    \\
& \ln\big(\wti D_{A/A_0} (z; z_0)\big) = \int_{\bbR} \xi(\lambda; A,A_0; z_0) d\lambda 
\big[(\lambda -z)^{-1} - (\lambda - \ol{z_0})^{-1}\big],    \\
& \xi(\lambda; A,A_0; z_0) = \xi(\lambda; A,A_0; z_1) + n(z_0,z_1) \, 
\text{ for some $n(z_0,z_1) \in \bbZ$,}     \lb{B.34a} \\
& {\tr}_{\cH} \big[(A - z)^{-1} - (A_0 - z I_{\cH})^{-1}\big] 
= - \int_{\bbR} \f{\xi(\lambda; A,A_0; z_0) d \lambda}{(\lambda - z)^2},    \\
& [f(A) - f(A_0)] \in \cB_1(\cH), \quad f \in C_0^{\infty}(\bbR),    \\
& {\tr}_{\cH} (f(A) - f(A_0)) = 
\int_{\bbR} \xi(\lambda; A,A_0; z_0) d\lambda \, f'(\lambda), 
\quad f \in C_0^{\infty}(\bbR)  
\end{align}
(the final two assertions can be greatly improved). 

Up to this point $\xi(\, \cdot \,; A,A_0; z_0)$ has been introduced via \eqref{B.31a} and 
hence by \eqref{B.34a}, it is determined only up to an additive integer. It is possible to remove 
this integer ambiguity in $\xi(\, \cdot \,; A,A_0; z_0)$ by adhering to a specific normalization 
as follows: One introduces 
\begin{equation}
U_0(z_0) = (A_0 - z_0 I_{\cH})(A_0 - \ol{z_0} I_{\cH})^{-1}, \quad 
U(z_0) = (A - z_0 I_{\cH})(A - \ol{z_0} I_{\cH})^{-1}, \quad z_0 \in \bbC_+, 
\end{equation}
and then determines a normalized spectral shift function, denoted by 
$\hatt \xi(\, \cdot \,; A,A_0; z_0)$, with the help of fixing the branch of 
$\ln\big(\wti D_{A/A_0} (z_0 ; z_0)\big)$ by the equation,
\begin{align} 
\begin{split} 
i \Im\big(\ln\big(\wti D_{A/A_0} (z_0 ; z_0)\big)\big) &= 
2 i \Im(z_0) \int_{\bbR} \f{\hatt \xi(\lambda; A,A_0; z_0) d \lambda}{|\lambda - z_0|^2}   \\
&= {\tr}_{\cH} \big(\ln\big(U(z_0)U_0(z_0)^{-1}\big)\big).     \lb{B.36a}
\end{split} 
\end{align} 
Here $\ln(W)$, with $W$ unitary in $\cH$, is defined via the spectral theorem,
\begin{align}
\begin{split} 
W = \ointctrclockwise_{S^1} \mu \, dE_W(\mu),  \quad 
\ln(W) = i \arg(W) = i \ointctrclockwise_{S^1} \arg(\mu) \, dE_W(\mu),&   \\
\arg(\mu) \in (- \pi, \pi],& 
\end{split} 
\end{align}
with $E_W(\cdot)$ the spectral family for $W$, and 
\begin{equation}
\ln\big(U(z_0) U_0(z_0)^{-1}\big) 
= \ln\big(I_{\cH} + [U(z_0) - U_0(z_0)] U_0(z_0)^{-1}\big) \in \cB_1(\cH),
\end{equation}
since 
\begin{equation}
U(z_0) - U_0(z_0) = - 2i \Im(z_0) \big[(A - \ol{z_0} I_{\cH})^{-1} 
- (A_0 - \ol{z_0} I_{\cH})^{-1}\big] \in \cB_1(\cH). 
\end{equation}
In conjunction with \eqref{B.36a} we also mention the estimate,
\begin{equation}
\int_{\bbR} \f{|\hatt \xi(\lambda; A,A_0; z_0)\big| d \lambda}{|\lambda - z_0|^2} 
\leq \f{\pi}{2} \big\|(A - z_0 I_{\cH})^{-1} - (A_0 - z_0 I_{\cH})^{-1} \big\|_{\cB_1(\cH)}. 
\end{equation}

Moreover, if there exists $\alpha_0 \geq 0$ such that
\begin{equation}
\alpha \big\|(A - i \alpha I_{\cH})^{-1} - (A_0 - i \alpha I_{\cH})^{-1}\big\|_{\cB(\cH)} < 1, 
\quad \alpha > \alpha_0,     \lb{B.38a}
\end{equation}
then (cf.\ \cite[p.~300--303]{Ya92}), 
\begin{equation}
\hatt \xi(\, \cdot \,; A,A_0; i \alpha) = \hatt \xi (\, \cdot \, ; A,A_0) \, \text{ is independent of $\alpha$ 
 for $\alpha > \alpha_0$.}    \lb{B.39a}
\end{equation}

We also note that if 
\begin{equation}
\dom(A) = \dom(A_0), \quad (A-A_0)(A_0 - z_0 I_{\cH})^{-1} \in \cB_1(\cH)   \lb{B.40A}
\end{equation}
holds for some (and hence for all) $z_0 \in \rho(A_0)$, then \eqref{B.38a} is valid for 
$0 < \alpha_0$ sufficiently large, and (cf.\ \cite[p.~303--304]{Ya92}),  
\begin{equation}
\hatt \xi (\, \cdot \, ; A,A_0) \in L^1\big(\bbR; (|\lambda| + 1)^{-1 - \varepsilon} d \lambda\big), 
\quad \varepsilon > 0.    \lb{B.40a}
\end{equation}
Since different spectral shift functions only differ by a constant, the inclusion \eqref{B.40a} 
remains valid for all spectral shift functions under the assumptions \eqref{B.40A}. 

Finally, if 
\begin{equation} 
B=B^* \in \cB_1(\cH) \, \text{ and } \, A = A_0 + B,    \lb{B.41a} 
\end{equation} 
then \eqref{B.38a} holds with $\alpha_0 = 0$ and 
(cf.\ \cite[p.~303--304]{Ya92})
\begin{equation}
\hatt \xi (\, \cdot \, ; A,A_0) \in L^1\big(\bbR; d \lambda\big).    \lb{B.42a} 
\end{equation}
Assuming \eqref{B.41a}, one usually introduces the (standard) perturbation determinant,
 \begin{align}
 \begin{split} 
 D_{A/A_0} (z) &= {\det}_{\cH} \big((A - z I_{\cH}) (A_0 - z I_{\cH})^{-1}\big)    \\
 &= {\det}_{\cH} \big(I_{\cH} + B(A_0 - z I_{\cH})^{-1}\big), \quad 
 z \in \rho(A) \cap \rho(A_0),
 \end{split} 
 \end{align} 
and the associated spectral shift function
\begin{equation}
\xi(\lambda; A,A_0) = \pi^{-1} \lim_{\varepsilon \downarrow 0} 
\Im(\ln(D_{A/A_0}(\lambda + i \varepsilon))) \, \text{ for a.e.~$\lambda \in \bbR$,} 
\end{equation}
and hence obtains the following well-known facts for $z \in \rho(A) \cap \rho(A_0)$, 
$\Im(z_0) > 0$, 
\begin{align}
& \ol{D_{A/A_0} (z)} = D_{A/A_0} (\ol z),  \quad \lim_{|\Im(z)|\to\infty} D_{A/A_0} (z) =1,   \\
& \xi(\lambda; A,A_0) = (2\pi)^{-1} \lim_{\varepsilon \downarrow 0} \big[
\Im(\ln(D_{A/A_0}(\lambda + i \varepsilon))) 
- \Im(\ln(D_{A/A_0}(\lambda - i \varepsilon)))\big]   \no \\
& \hspace*{8.3cm} \text{ for a.e.~$\lambda \in \bbR$,}     \lb{B.45a} \\ 
& \wti D_{A/A_0}(z;z_0) = D_{A/A_0} (z) / D_{A/A_0} (\ol{z_0}),   \lb{B.46a} \\ 
& \xi(\, \cdot \, ; A,A_0) \in L^1(\bbR; d\lambda),    \\ 
& \ln(D_{A/A_0} (z)) = \int_{\bbR} \xi(\lambda; A, A_0) d \lambda \, (\lambda - z)^{-1},    \\
& \int_{\bbR} \xi(\lambda; A, A_0) d \lambda = {\tr}_{\cH} (B), \quad 
\int_{\bbR} |\xi(\lambda; A, A_0)| d \lambda \leq \|B\|_{\cB_1(\cH)},   
\end{align}
Combining the facts \eqref{B.31a}, \eqref{B.45a}, and \eqref{B.46a} at first only yields 
for some $n(z_0) \in \bbZ$, 
\begin{equation} 
\xi(\lambda; A,A_0; z_0) = \xi(\lambda; A,A_0) + n(z_0) \, \text{ for a.e.~$\lambda \in \bbR$.}
\end{equation}
However, also taking into account \eqref{B.39a} and \eqref{B.42a} finally yields 
\begin{equation} 
\hatt \xi(\lambda; A,A_0) = \xi(\lambda; A,A_0) \, \text{ for a.e.~$\lambda \in \bbR$.}  \lb{B.47a}
\end{equation}
Thus, the normalization employed in \eqref{B.36a} is consistent with the normalization implied by \eqref{B.42a} in the case of trace class perturbations.

We continue this appendix with the following result, originally derived in \cite{GN12a} 
under slightly different hypotheses: 

\begin{theorem} \lb{tB.1}
$(i)$ Suppose $A_0$ and $A$ are self-adjoint operators with 
$\dom(A) = \dom(A_0) \subseteq \cH$, with $B = \ol{(A - A_0)} \in \cB(\cH)$. \\
$(ii)$ Assume that for some $($and hence for all\,$)$ $z_0 \in \rho(A_0)$,
\begin{align}
& |B|^{1/2} (A_0 - z_0 I_{\cH})^{-1} \in \cB_2(\cH),    \lb{B.2} 
\end{align}
and that 
\begin{equation}
\lim_{z \to \pm i \infty} 
\big\||B|^{1/2} (A_0 - z I_{\cH})^{-1} |B|^{1/2}\big\|_{\cB_2(\cH)} = 0. 
\lb{B.4}
\end{equation}
$(iii)$ Suppose that
\begin{equation}
{\tr}_{\cH} \big((A_0 - z I_{\cH})^{-1} B (A_0 - z I_{\cH})^{-1}\big) = 
\eta' (z), \quad z \in \rho(A_0),    \lb{B.5}
\end{equation} 
where $\eta(\cdot)$ has normal limits, 
$\lim_{\varepsilon \downarrow 0} \eta(\lambda + i \varepsilon) := \eta(\lambda + i0)$  
for a.e.\ $\lambda \in \bbR$.

Then
\begin{align}
& \int_{\bbR} \xi(\lambda; A, A_0) d\lambda 
\big[(\lambda - z)^{-1} - (\lambda - z_0)^{-1}\big] = \eta (z) - \eta (z_0)   \no \\
& \quad  
+ \ln\bigg(\f{{\det}_{2,\cH}\big(I_{\cH} + \sgn(B) |B|^{1/2} 
(A_0 - z I_{\cH})^{-1} |B|^{1/2}\big)}
{{\det}_{2,\cH}\big(I_{\cH} + \sgn(B) |B|^{1/2} (A_0 - z_0 I_{\cH})^{-1} |B|^{1/2}\big)}
\bigg), 
\quad z, z_0 \in \rho(A) \cap \rho(A_0),    \lb{B.6} 
\end{align} 
and for some constant $c\in\bbR$, 
\begin{align}
\begin{split} 
\xi(\lambda; A, A_0) &= \pi^{-1} \Im\big(\ln\big(
{\det}_{2,\cH}\big(I_{\cH} + \sgn(B) |B|^{1/2} 
(A_0 - (\lambda + i 0) I_{\cH})^{-1} |B|^{1/2}\big)\big)\big)   \lb{B.7} \\
& \quad + \pi^{-1} \Im(\eta(\lambda + i0)) + c \, \text{ for a.e.\ $\lambda \in \bbR$.} 
\end{split} 
\end{align}
\end{theorem}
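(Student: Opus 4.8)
The plan, following \cite{GN12a}, is to differentiate the modified Fredholm determinant occurring on the right-hand side of \eqref{B.6}, to identify its logarithmic derivative with the regularized trace of the resolvent difference, to integrate back against the Krein trace formula, and finally to pass to normal boundary values. Throughout put $R_0(z) = (A_0 - z I_{\cH})^{-1}$, $R(z) = (A - z I_{\cH})^{-1}$, $V = |B|^{1/2}$, $J = \sgn(B)$ (so $B = V J V$), and $K(z) = J V R_0(z) V$.

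First I would record the requisite trace ideal memberships. Since $\dom(A) = \dom(A_0)$ one has $R(z) = R_0(z)\big[I_{\cH} - B R(z)\big]$, so \eqref{B.2} gives $V R_0(z), V R(z) \in \cB_2(\cH)$, whence $R(z) - R_0(z) = - \big(R(z) J V\big)\big(V R_0(z)\big) \in \cB_1(\cH)$, $z \in \bbC \backslash \bbR$; consequently $\xi(\,\cdot\,; A, A_0)$ is well defined up to an additive constant, lies in $L^1\big(\bbR;(\lambda^2+1)^{-1} d\lambda\big)$, and satisfies the Krein trace formula ${\tr}_{\cH}\big(R(z) - R_0(z)\big) = - \int_{\bbR} \xi(\lambda; A, A_0)\, d\lambda \, (\lambda - z)^{-2}$. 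Likewise $K(z) \in \cB_2(\cH)$ and $K'(z) = J V R_0(z)^2 V = J\big(V R_0(z)\big)\big(R_0(z) V\big) \in \cB_1(\cH)$, and a direct computation using $R_0(z) - R_0(z) B R(z) = R(z)$ shows $(I_{\cH} + K(z))^{-1} = I_{\cH} - J V R(z) V$ for $z \in \rho(A) \cap \rho(A_0)$; hence ${\det}_{2,\cH}(I_{\cH} + K(z))$ is analytic and nonvanishing on $\bbC_+$, and I fix the branch of $\ln {\det}_{2,\cH}(I_{\cH} + K(z))$ on $\bbC_+$ normalized by \eqref{B.4}, which forces ${\det}_{2,\cH}(I_{\cH} + K(z)) \to 1$ as $z \to i\infty$.

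The core is the differential identity
\begin{equation*}
\f{d}{dz}\ln {\det}_{2,\cH}(I_{\cH} + K(z)) = - {\tr}_{\cH}\big(R(z) - R_0(z)\big) - \eta'(z), \quad z \in \bbC_+ .
\end{equation*}
For its left-hand side I would invoke the standard formula $\f{d}{dz}\ln{\det}_{2,\cH}(I_{\cH} + K(z)) = - {\tr}_{\cH}\big(K(z)(I_{\cH}+K(z))^{-1} K'(z)\big)$ (valid since $K(z) \in \cB_2(\cH)$, $K'(z) \in \cB_1(\cH)$), simplify $K(z)(I_{\cH}+K(z))^{-1} = J V R(z) V$ using $(I_{\cH}+K(z))^{-1} = I_{\cH} - J V R(z) V$ and $R_0(z) - R_0(z) B R(z) = R(z)$, and then, with $V J V = B$ and one cyclic permutation, obtain $\f{d}{dz}\ln{\det}_{2,\cH}(I_{\cH}+K(z)) = - {\tr}_{\cH}\big(B R(z) B R_0(z)^2\big)$. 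For the right-hand side, $R(z) - R_0(z) = - R_0(z) B R(z)$, and substituting $R(z) = R_0(z) - R(z) B R_0(z)$ once more yields ${\tr}_{\cH}\big(R_0(z) B R(z)\big) = {\tr}_{\cH}\big(R_0(z) B R_0(z)\big) - {\tr}_{\cH}\big(R_0(z) B R(z) B R_0(z)\big)$, where the first term equals $\eta'(z)$ by \eqref{B.5} and the second equals ${\tr}_{\cH}\big(B R(z) B R_0(z)^2\big)$ by cyclicity; hence $- {\tr}_{\cH}\big(R(z) - R_0(z)\big) - \eta'(z) = - {\tr}_{\cH}\big(B R(z) B R_0(z)^2\big)$, matching the left-hand side. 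Every operator product occurring here is trace class by the $\cB_2$-memberships recorded above, which is what legitimizes the cyclic rearrangements and the use of \eqref{B.5}.

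It remains to integrate and to pass to the boundary. Integrating the differential identity along a path in $\bbC_+$ from $z_0$ to $z$, inserting the Krein trace formula together with $\int_{z_0}^{z} (\lambda - z')^{-2}\, dz' = (\lambda - z)^{-1} - (\lambda - z_0)^{-1}$ (Fubini being justified by $\xi(\,\cdot\,; A, A_0) \in L^1(\bbR; (\lambda^2+1)^{-1} d\lambda)$), gives \eqref{B.6} for $z, z_0 \in \bbC_+$, and hence for all $z, z_0 \in \rho(A) \cap \rho(A_0)$ by analytic continuation. For \eqref{B.7} I would then let $z = \lambda + i\varepsilon \to \lambda + i0$ in \eqref{B.6}: the regularized Cauchy transform $\int_{\bbR}\xi(\mu; A, A_0)\, d\mu\, [(\mu - z)^{-1} - (\mu - z_0)^{-1}]$ has finite normal limits for a.e.\ $\lambda$ (Privalov/Fatou, using the weighted integrability of $\xi(\,\cdot\,; A, A_0)$), with imaginary part converging by the Poisson kernel to $\pi\, \xi(\lambda; A, A_0)$ for a.e.\ $\lambda$; since $\eta(\lambda + i0)$ exists a.e.\ by hypothesis, \eqref{B.6} forces $\ln {\det}_{2,\cH}(I_{\cH} + K(\lambda + i0))$ to exist a.e.\ as well, and equating imaginary parts yields \eqref{B.7}, the constant $c$ absorbing the $\lambda$-independent contributions at $z_0$. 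I expect the main obstacle to be precisely this last passage to boundary values, together with the bookkeeping of which Schatten class each operator product in the trace computations belongs to; once those are settled, the differential identity itself is a short algebraic computation.
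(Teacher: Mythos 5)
Your proposal is correct and follows essentially the same route as the paper, which does not prove Theorem \ref{tB.1} in-text but defers to \cite[Theorem~2.3]{GN12a} and records in Remark \ref{rB.2} precisely your differential identity \eqref{B.8} as ``the basic identity underlining'' the theorem, together with the observation $\eta(z)-\eta(z_0)=(z-z_0)\tr_{\cH}\big((A_0-zI_{\cH})^{-1}B(A_0-z_0I_{\cH})^{-1}\big)$ used in the integrated form. Your verification of the trace-class memberships, the integration from $z_0$ to $z$, and the passage to normal boundary values via the Poisson kernel reproduce the cited argument faithfully.
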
 

\begin{remark} \lb{rB.2}
$(i)$ We note that Theorem \ref{tB.1} was derived in \cite{GN12a} for unbounded 
quadratic form perturbations $B$ of $A_0$ and hence $A_0$ was assumed to be 
bounded from below. Since we here assume that $B$ is bounded, boundedness 
from below of $A_0$ is no longer needed and the proof of \cite[Theorem~2.3]{GN12a}
applies line by line to the current setting. We also note that since $B|_{\dom(A_0)}$ 
is symmetric and bounded, $B = \ol{(A - A_0)}$ is self-adjoint on $\cH$. The basic 
identity underlining Theorem \ref{tB.1} is, of course, 
\begin{align}
& {\tr}_{\cH}\big((A - z I_{\cH})^{-1} - (A_0 - z I_{\cH})^{-1}\big) + 
{\tr}_{\cH}\big((A_0 - z I_{\cH})^{-1}B(A_0 - z I_{\cH})^{-1}\big)    \lb{B.8} \\
& \quad = - \f{d}{dz} \ln\big({\det}_{2,\cH}\big(I_{\cH} + \sgn(B)|B|^{1/2}
(A_0 - z I_{\cH})^{-1}|B|^{1/2}\big)\big), \quad z \in \rho(A) \cap \rho(A_0). \no 
\end{align}
$(ii)$ One can show (cf.\ \eqref{det}) that 
\begin{equation}
\eta(z) - \eta(z_0) = (z - z_0) {\tr}_{\cH} \big((A_0 -z I_{\cH})^{-1} B (A_0 - z_0 I_{\cH})^{-1}\big), 
 \quad z, z_0 \in \rho(A) \cap \rho(A_0).
\end{equation}
We will use this fact later in this appendix. 
\hfill $\diamond$
\end{remark}

For modified Fredholm determinants and their properties, we refer, for instance, to 
\cite[Sect.~XI.9]{DS88}, \cite[Sect.\ IV.2]{GK69} and \cite[Ch.\ 9]{Si05}. Here we just note that 
\begin{equation}\lb{Z2.117}
{\det}_{2,\cH}(I_{\cK}-A) = \prod_{n\in\cJ}(1-\lambda_n(A))e^{\lambda_n(A)},\quad A\in \cB_2(\cH),
\end{equation}
where $\{\lambda_n(A)\}_{n\in \cJ}$ is an enumeration of the non-zero eigenvalues of $A$, listed in non-increasing order according to their moduli, and $\cJ\subseteq \bbN$ is an appropriate indexing set, and 
\begin{align}
& {\det}_{2,\cH}(I_{\cK}-A)= {\det}_{\cH}((I_{\cH}-A)\exp(A)), \quad A\in\cB_2 (\cH), \lb{Z3.21} \\
& {\det}_{2,\cH}((I_{\cK}-A)(I_{\cH}-B))={\det}_{2,\cH}(I_{\cK}-A){\det}_{2,\cH}(I_{\cH}-B)
e^{-\tr_{\cH}(AB)},\lb{Z3.22}\\
&\hspace*{8.65cm}A, B\in\cB_2(\cH).   \no 
\end{align} 
In addition, we recall the fact, that ${\det}_{2,\cH}(I_{\cH} + \cdot \,)$ 
is continuous on $\cB_2(\cH)$, explicitly, for some $c > 0$, one has the estimate (cf.\ \cite[Theorem~9.2\,(c)]{Si05})
\begin{align}
\begin{split} 
|{\det}_{2,\cH}(I_{\cH} + T) - {\det}_{2,\cH}(I_{\cH} + S)| \leq \|T - S\|_{\cB_2(\cH)} 
e^{c[\|S\|_{\cB_2(\cH)} + \|T\|_{\cB_2(\cH)} + 1]^2},&   \lb{detcont} \\
S, T \in \cB_2(\cH).&
\end{split} 
\end{align} 

In addition, we need some results concerning the connection between trace 
formulas and modified Fredholm determinants (cf., eg., \cite[Sect.\ IV.2]{GK69}, 
\cite[Ch.\ 9]{Si05}, \cite[Sect.\ 1.7]{Ya92}). Suppose that $A_0$ is self-adjoint in 
the complex, separable Hilbert space $\cH$, and assume that the self-adjoint operator 
$B$ in $\cH$, satisfies
\begin{align}
\begin{split} 
& \dom(B) \supseteq \dom(A_0),    \lb{domB} \\
& \text{$B$ is infinitesimally bounded with respect to $A_0$,}   
\end{split}
\end{align} 
and for some (and hence for all) $z_0, z_1 \in \rho(A_0)$, 
\begin{equation}
B (A_0 - z_0 I_{\cH})^{-1} \in \cB_2(\cH), \quad 
(A_0 - z_0 I_{\cH})^{-1} B (A_0 - z_1 I_{\cH})^{-1} \in \cB_1(\cH).      \lb{BB2}
\end{equation} 
(We recall that $B$, with $\dom(B) \supseteq \dom(A_0)$, is called infinitesimally bounded 
with respect to $A_0$, if for all $\varepsilon > 0$, there exists $\eta(\varepsilon) > 0$, such 
that for $f \in \dom(A_0)$, 
$\|B f\|_{\cH} \leq \varepsilon \|A_0 f\|_{\cH} + \eta(\varepsilon) \|f\|_{\cH}$.) 

Then by assumption \eqref{domB}, 
\begin{equation} 
A = A_0 + B, \quad \dom(A) = \dom(A_0),    
\end{equation}
is self-adjoint in $\cH$, and 
\begin{align}
\begin{split} 
& \big[(A - z I_{\cH})^{-1} - (A_0 - z I_{\cH})^{-1}\big] 
= \big[(A_0 - z I_{\cH})^{-1} B (A_0 - z I_{\cH})^{-1}\big]     \\
& \quad \times \big[(A_0 - z I_{\cH}) (A - z I_{\cH})^{-1}\big] \in \cB_1(\cH), \quad 
z \in \rho(A) \cap \rho(A_0).
\end{split} 
\end{align}
Given this setup, one concludes the trace formula (cf., e.g., \cite[p.~44]{Ya92})
\begin{align}
& \tr_{\cH} \big((A - z I_{\cH})^{-1} - (A_0 - z I_{\cH})^{-1}\big)      \no \\
& \quad = - \f{d}{dz} \ln \big({\det}_{2,\cH}\big(I_{\cH} 
+ B (A_0 - z_0 I_{\cH})^{-1}\big)\big)   \no \\
& \qquad - {\tr}_{\cH}\big((A_0 - z I_{\cH})^{-1} B (A_0 - z I_{\cH})^{-1}\big)   \lb{detAA0} \\
& \quad = - \int_{\bbR} \f{\xi(\lambda; A, A_0) d \lambda}{(\lambda - z)^2}, 
\quad z \in \rho(A) \cap \rho(A_0),    \no
\end{align}
and consequently, also
\begin{align}
& \ln \bigg(\f{{\det}_{2,\cH}\big(I_{\cH} + B (A_0 - z I_{\cH})^{-1}\big)} 
{{\det}_{2,\cH}\big(I_{\cH} + B (A_0 - z_0 I_{\cH})^{-1}\big)}\bigg)  
\no \\
& \qquad + (z - z_0) {\tr}_{\cH}\big((A_0 - z I_{\cH})^{-1} B (A_0 - z_0 I_{\cH})^{-1}\big)    
\lb{det} \\ 
& \quad = \int_{\bbR} \xi(\lambda; A, A_0) d \lambda \big[(\lambda - z)^{-1} 
- (\lambda - z_0)^{-1}\big], \quad z,z_0 \in \rho(A) \cap \rho(A_0).     \no 
\end{align}
(To verify \eqref{det} it suffices to differentiate either side of \eqref{det} w.r.t. $z$, comparing 
with the final three lines of relation \eqref{detAA0}, and observing that either side of \eqref{det} vanishes at $z = z_0$.)   

At this point we recall that ${\det}_{2,\cH}(I_{\cH} + \cdot \,)$ is continuous on $\cB_2(\cH)$, 
as recorded earlier in \eqref{detcont}. 

Next, suppose that $A_{0,n}$, $B_n$, $A_n = A_{0,n} + B_n$, $n \in \bbN$, and 
$A_0$, $B$, $A = A_0 + B$ satisfy hypotheses \eqref{domB} and \eqref{BB2}. Moreover, 
assume that for some (and hence for all) $z_0 \in \bbC \backslash \bbR$, 
\begin{align}
& \wlim_{n \to \infty} (A_{0,n} - z_0 I_{\cH})^{-1} = (A_0 - z_0 I_{\cH})^{-1},     \lb{wrl} \\
& \lim_{n\to\infty} \big\|B_n (A_{0,n} - z_0 I_{\cH})^{-1} -
B (A_0 - z_0 I_{\cH})^{-1}\big\|_{\cB_2(\cH)} = 0,     \lb{B2conv} \\ 
& \lim_{n\to\infty} \big\|(A_{0,n} - z_0 I_{\cH})^{-1} B_n (A_{0,n} - z_0 I_{\cH})^{-1}     \no \\
& \hspace*{1.1cm} - (A_0 - z_0 I_{\cH})^{-1}B (A_0 - z_0 I_{\cH})^{-1}\big\|_{\cB_1(\cH)} = 0.    \lb{B1conv} 
\end{align}
One notes that due to self-adjointness of $A_{0,n}, A_0$, $n \in \bbN$, relation \eqref{wrl} 
is actually equivalent to strong resolvent convergence, that is,
\begin{equation}
\slim_{n \to \infty} (A_{0,n} - z I_{\cH})^{-1} = (A_0 - z I_{\cH})^{-1},  
\quad z \in \bbC \backslash \bbR.    \lb{srl}
\end{equation}   
Moreover, the well-known identity (see, e.g., \cite[p.~178]{We80}), 
\begin{align}
& (T_1 - z I_{\cH})^{-1} - (T_2 - z I_{\cH})^{-1} = (T_1 - z_0 I_{\cH})(T_1 - z I_{\cH})^{-1}   \no \\
& \quad \times \big[(T_1 - z_0 I_{\cH})^{-1} - (T_2 - z_0 I_{\cH})^{-1}\big] 
(T_2 - z_0 I_{\cH})(T_2 - z I_{\cH})^{-1},     \lb{resid} \\
& \hspace*{6.6cm} z, z_0 \in \rho(T_1) \cap \rho(T_2),    \no 
\end{align}
where $T_j$, $j=1,2$, are linear operators in $\cH$ with 
$\rho(T_1) \cap \rho(T_2) \neq \emptyset$, together with Lemma \ref{l3.6}, \eqref{B1conv}, 
and \eqref{srl} imply
\begin{align}
& \lim_{n\to\infty} \big\|(A_{0,n} - z_0 I_{\cH})^{-1} B_n (A_{0,n} - z_1 I_{\cH})^{-1}     \no \\
& \hspace*{1.1cm} - (A_0 - z_0 I_{\cH})^{-1}B (A_0 - z_1 I_{\cH})^{-1}\big\|_{\cB_1(\cH)} = 0, 
\quad z_0, z_1 \in \bbC \backslash \bbR.    \lb{B1conve} 
\end{align}

Then \eqref{det} applied to the self-adjoint pairs $(A_n, A_{0,n})$, $n \in \bbN$, and $(A, A_0)$, in combination with \eqref{detcont}--\eqref{B1conve} implies the continuity result,  
\begin{align}
& \lim_{n\to\infty} \int_{\bbR} \xi(\lambda; A_n, A_{0,n}) d \lambda \, \big[(\lambda - z)^{-1} 
- (\lambda - z_0)^{-1}\big]     \no \\
& \quad =\lim_{n\to\infty}\Bigg\{ \ln \Bigg(\f{{\det}_{2,\cH}\big(I_{\cH} 
+ B_n (A_{0,n} - z I_{\cH})^{-1}\big)} 
{{\det}_{2,\cH}\big(I_{\cH} + B_n (A_{0,n} - z_0 I_{\cH})^{-1}\big)}\Bigg)   
\no \\
& \hspace*{1.8cm} + (z - z_0) 
{\tr}_{\cH}\big((A_{0,n} - z I_{\cH})^{-1} B_n (A_{0,n} - z_0 I_{\cH})^{-1}\big) 
\Bigg\}    \no \\
& \quad = \ln \Bigg(\f{{\det}_{2,\cH}\big(I_{\cH} 
+ B (A_0 - z I_{\cH})^{-1}\big)} 
{{\det}_{2,\cH}\big(I_{\cH} + B (A_0 - z_0 I_{\cH})^{-1}\big)}\Bigg)   
\no \\
&  \qquad + (z - z_0) 
{\tr}_{\cH}\big((A_0 - z I_{\cH})^{-1} B (A_0 - z_0 I_{\cH})^{-1}\big) 
\no \\
& \quad = \int_{\bbR} \xi(\lambda; A, A_0) d \lambda \, \big[(\lambda - z)^{-1} 
- (\lambda - z_0)^{-1}\big],  \quad z, z_0 \in \bbC \backslash \bbR.     \lb{detlim} 
\end{align}

We note that these considerations naturally extend to more complex situations where 
$A = A_0 +_q B$, $A_n = A_{0,n} +_q B_n$, $n \in \bbN$, are defined as quadratic form 
sums of $A_0$ and $B$ and $A_{0,n}$ and $B_n$ (without assuming any correlation 
between the domains of $A$, $A_n$ and $A_0$), and the modified Fredholm determinants 
are replaced by symmetrized ones as in Theorem \ref{tB.1}, see, for instance, 
\cite{GN12}, \cite{GN12a}, and \cite{GZ12}. 
 
\smallskip 
 
\noindent 
{\bf Acknowledgments.} We are indebted to Harald Grosse, Jens Kaad, 
Yuri Latushkin, Matthias Lesch  (in particular for sending us \cite{Wo07}), 
Konstantin Makarov, Alexander Sakhnovich, and Yuri Tomilov for helpful 
discussions and correspondence. 

A.C., F.G., and F.S. thank the Erwin Schr\"odinger International 
Institute for Mathematical Physics (ESI), Vienna, Austria, for funding support 
for this collaboration in form of a Research-in Teams project, ``Scattering Theory 
and Non-Commutative Analysis'' for the duration of June 22 to July 22, 2014. 
F.G. and G.L. are indebted to Gerald Teschl and the ESI for a kind invitation to visit 
the University of Vienna, Austria, for a period of two weeks in June/July of 2014. 
A.C., G.L., and F.S.\ gratefully acknowledge financial support from the Australian 
Research Council. A.C.\ also thanks the Alexander von Humboldt 
Stiftung and colleagues at the University of M\"unster.

 
\end{document}